\newtheorem{theo}{Theorem}
\newtheorem{prop}[theo]{Proposition}
\newtheorem{lem}[theo]{Lemma}
\newtheorem{coro}[theo]{Corollary}
\newtheorem*{fact}{Fact}
\newtheorem*{claim}{Claim}
\theoremstyle{definition}
\newtheorem{rmk}[theo]{Remark}
\crefname{theo}{Theorem}{Theorems}
\crefname{lem}{Lemma}{Lemmas}
\crefname{coro}{Corollary}{Corollaries}
\crefname{defi}{Definition}{Definitions}
\crefname{ex}{Example}{Examples}
\crefname{rmk}{Remark}{Remarks}
\crefname{enumi}{}{}
\renewcommand\qedsymbol{\setbox0=\hbox{\ \ \ \footnotesize{\normalfont Q.E.D.}}\kern\wd0 \strut \hfill \kern-\wd0 \box0}
\newcommand*\claimqed{\renewcommand\qedsymbol{\setbox0=\hbox{\ \ \ {\normalfont \ensuremath{\square}}}\kern\wd0 \strut \hfill \kern-\wd0 \box0}\qedhere\renewcommand\qedsymbol{\setbox0=\hbox{\ \ \ \footnotesize{\normalfont Q.E.D.}}\kern\wd0 \strut \hfill \kern-\wd0 \box0}}
\newcommand*\N{\mathbb{N}}
\newcommand*\R{\mathbb{R}}
\newcommand*\D{\mathbb{D}}
\newcommand*\tbullet{\text{\raisebox{0.25pt}{\scalebox{0.6}{$\bullet$}}}}
\newcommand*\sth{\mathrel{:}}
\newcommand*\map{\colon}
\newcommand*\tp{\mathrm{tp}}
\newcommand*\inc{\iota}
\newcommand*\quot{\mathrm{quot}}
\newcommand*\Mm{\mathrm{M}}
\newcommand*\Nn{\mathrm{N}}
\newcommand*\dd{\mathrm{d}}
\newcommand*\diam{\mathrm{diam}}
\newcommand*\cov{\mathrm{cov}}
\newcommand*\Vol{\mathrm{Vol}}
\newcommand*\st{\mathrm{st}}
\newcommand*\medium{\mathfrak{m}}
\newcommand*\uu{\mathfrak{u}}
\newcommand*\lang{\mathtt{L}}
\title{On metric approximate subgroups}
\author{Ehud Hrushovski}
\thanks{The first author declares that part of the work was carried out in the Hebrew University of Jerusalem with support by the European Research Council under the European Union Seventh Framework Program (FP7/2007-2013): ERC Grant Agreement No. 291111.} 
\address[E.~Hrushovski]{University of Oxford; Mathematical Institute; Andrew Wiles Building, Woodstock Road, Oxford, OX2 6GG, England}
\email[E.~Hrushovski]{Ehud.Hrushovski@maths.ox.ac.uk}
\author{Arturo Rodr\'{\i}guez Fanlo}
\thanks{The second author declares that his work was carried out in the University of Oxford supported by a PhD studentship from the Mathematical Institute funded by the Engineering and Physical Sciences Research Council: EPSRC Dept. Award 1.22.}
\address[A.~Rodr\'{\i}guez Fanlo]{Hebrew University of Jerusalem; Einstein Institute of Mathematics; Jerusalem, 9190401, Israel}
\email[A.~Rodr\'{\i}guez Fanlo]{arturo.rodriguez@mail.huji.ac.il}
\subjclass[2010]{03C98, 11P70, 20A15}
\keywords{metric approximate subgroup, Lie model}
\date{\today}
\begin{document}

\begin{abstract}
Let $G$ be a group with a metric invariant under left and right translations, and let $\D_r$ be the ball of radius $r$ around the identity. A $(k,r)${\hyp}metric approximate subgroup is a symmetric subset $X$ of $G$ such that the pairwise product set $XX$ is covered by at most $k$ translates of $X\D_r$. This notion was introduced in \cite{tao2008product} and \cite{tao2014metric} along with the version for discrete groups (approximate subgroups). In \cite{hrushovski2012stable}, it was shown for the discrete case that, at the asymptotic limit of $X$ finite but large, the ``approximateness'' (or need for more than one translate) can be attributed to a canonically associated Lie group. Here we prove an analogous result in the metric setting, under a certain finite covering assumption on $X$ replacing finiteness. In particular, if $G$ has bounded exponent, we show that any $(k,r)${\hyp}metric approximate subgroup is close to a $(1,r')${\hyp}metric approximate subgroup for an appropriate $r'$.
\end{abstract}

\maketitle

\section*{Introduction}
Approximate subgroups were defined in \cite{tao2008product} along with a metric version, \emph{rough approximate subgroups}, allowing also a thickening of the set (see also \cite{tao2014metric}). Two subsets of a group are \emph{$(k,T)${\hyp}roughly commensurable} if $k$ translates of one of them cover the other up to thickening by $T$ (i.e. multiplying by $T$ on the right), where $T$ is a subset containing the identity. A \emph{$(k,T)${\hyp}rough approximate subgroup} is a symmetric subset containing the identity which is $(k,T)${\hyp}roughly commensurable to its pairwise product set. We simply say that two set are $k${\hyp}commensurable if they are $(k,\boldsymbol{1})${\hyp}roughly commensurable and, similarly, we say that a set is a $k${\hyp}approximate subgroup if it is a $(k,\boldsymbol{1})${\hyp}rough approximate subgroup, where $\boldsymbol{1}\coloneqq \{1\}$. As long as there is no confusion, to simplify the notation, we may omit the parameters $k$ or $T$.

Finite approximate subgroups arose in analysis (see \cite{freiman1973foundations}), and have had deep significance in group theory and additive combinatorics. See especially the paper \cite{breuillard2012structure} that classifies finite approximate subgroups, and the many papers that cite it. There are also connections to basic model theoretic concepts such as connected components and strong types, as discussed, for example, in \cite{hrushovski2022amenability}. The basic fact is that, in the presence of an invariant measure, approximate subgroups are governed by canonically associated Lie groups \cite{hrushovski2012stable}; it is this that we aim to generalise to the metric case.

By a metric group we mean a group together with a metric invariant under left translations. We will write $\D_r(1)$ for the closed ball of radius $r$ at the identity and, similarly, $\D_r(X)\coloneqq X\D_r(1)$ for the (closed) $r${\hyp}thickening of $X$. In this case, we say that two sets are \emph{$(k,r)${\hyp}commensurable} if they are $(k,\D_r(1))${\hyp}roughly commensurable and that a set is a \emph{$(k,r)${\hyp}metric approximate subgroup} if it is a $(k,\D_r(1))${\hyp}rough approximate subgroup.

Our main theorem states, roughly speaking, that an infinitesimal thickening of any ultraproduct $X=\faktor{\prod X_m}{\uu}$ of symmetric subsets of metric groups having \textsl{regular enough discretisations} is \textsl{nicely associated} to a compact neighbourhood of the identity of a Lie group. These two notions will be made precise in the second and third sections, respectively (for the latter, see also the definition of Lie models in \cite{rodriguez2020piecewise}). 

The proof of \cref{t:main theorem} will require left invariance along with a local Lipschitz regularity for right translations. Here, to simplify the exposition, we consider first \emph{bi{\hyp}invariant metric groups}, i.e. metric groups where also right translations are isometries. We can then give the following statement:

\begin{theo}[Metric Lie Model] \label{t:main theorem easy} Let $(G_m,X_m,r_{i,m})_{i\leq m\in\N}$ be a sequence such that, for some fixed $k\in\N$,
\begin{enumerate}[label={\emph{(\alph*)}}, wide]
\item $G_m$ is a bi{\hyp}invariant metric group,
\item $X_m$ is a symmetric subset, 
\item $(r_{i,m})_{i\leq m}$ is a sequence of positive reals with $2r_{i,m}\leq r_{i-1,m}$ and
\[\Nn_{r_{i,m}}(X^9_m)\leq k\cdot \Nn_{9r_{i,m}}(X_m)<\infty .\]
\end{enumerate}
Let $(G^*,X,\ldots)$ be a non{\hyp}principal ultraproduct with enough structure, $G$ the subgroup generated by $X$ and $o_r(1)=\bigcap_i \D_{r_i}(1)$, where $\D_{r_i}(1)$ is the ultraproduct of the balls of radius $r_{i,m}$. Then, $G\cdot o_r(1)\leq G^*$ has a connected Lie model $\pi\map H\to L=\faktor{H}{K}$ with $o_r(1)\trianglelefteq K\subseteq X^8\cdot o_r(1)$ such that
\begin{enumerate}[label={\emph{(\arabic*)}}, wide]
\item $H\cap X^4\cdot o_r(1)$ and $X^2\cdot o_r(1)$ are commensurable,
\item $H\cap X^8\cdot o_r(1)$ generates $H$, and
\item $\pi[H\cap X^4]$ is a compact neighbourhood of the identity of $L$.
\end{enumerate}
\end{theo}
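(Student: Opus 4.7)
The plan is to adapt Hrushovski's stabilizer construction from the discrete Lie model theorem \cite{hrushovski2012stable} to the present metric ultraproduct setting, with the role of finiteness in the discrete case played by the nested covering hypothesis $\Nn_{r_{i,m}}(X^9_m)\leq k\cdot \Nn_{9r_{i,m}}(X_m)$. Together with $2r_{i,m}\leq r_{i-1,m}$, this hypothesis serves two purposes: it yields uniform measure-type estimates on bounded powers $X^n$ at each scale, and it ensures compatibility between consecutive scales so that a coherent measure survives the passage $r_i\to 0$, giving the infinitesimal subgroup $o_r(1)$ modulo which the resulting structure is locally compact.

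First I would construct a bounded, finitely additive, left-invariant measure $\mu$ on the Boolean algebra of subsets of $X^n\cdot o_r(1)$ definable modulo infinitesimals, for each fixed $n$. Concretely, at the $m$-th factor and scale $r_{i,m}$ one normalises the cardinality of a maximal $r_{i,m}${\hyp}separated net by $\Nn_{9r_{i,m}}(X_m)$; the covering hypothesis ensures these normalised counts are uniformly bounded on $X^n_m$ for fixed $n$. Taking the ultralimit in $m$ and refining over $i$ produces a Keisler--Loeb-style probability measure that is invariant under left translation by $G$ and, by bi{\hyp}invariance of the ambient metric, also under right translation; crucially, $\mu$ vanishes exactly on the sets that become infinitesimal relative to $o_r(1)$.

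With $\mu$ in hand, I would define $H$ as a Hrushovski-style stabilizer: the set of $g\in G\cdot o_r(1)$ such that $gU\triangle U$ has infinitesimal $\mu${\hyp}measure for every suitable measurable thickening $U\supseteq o_r(1)$ inside bounded powers $X^n\cdot o_r(1)$. A metric analogue of the Sanders--Croot--Sisask--Hrushovski stabilizer theorem then delivers property (1), namely that $H\cap X^4\cdot o_r(1)$ is commensurable with $X^2\cdot o_r(1)$. Property (2) is built into the construction of $H$ as generated by $H\cap X^8\cdot o_r(1)$, and the quotient $H/o_r(1)$ inherits a locally compact Hausdorff topology in which the image of $H\cap X^4$ is a compact neighbourhood of the identity. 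To produce the Lie model I would then apply Gleason--Yamabe to $H/o_r(1)$: there is a compact normal subgroup $K/o_r(1)$ such that the quotient is a Lie group $L$, and its smallness can be arranged so that $K\subseteq X^8\cdot o_r(1)$. Connectedness of $L$ is obtained by replacing $H$ with the preimage of the identity component, which remains generated by $H\cap X^8\cdot o_r(1)$ because $\pi[H\cap X^4]$ is already a compact neighbourhood of the identity, giving (3).

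The main obstacle I expect is the second step: verifying the precise combinatorial intersection property (1) for the measure{\hyp}theoretic stabilizer. In the discrete case this is the heart of Hrushovski's argument and leans on Pl\"unnecke--Ruzsa{\hyp}type inequalities; here those must be replaced by analogous estimates that hold uniformly in the thickenings at all scales, and the nested condition $2r_{i,m}\leq r_{i-1,m}$ is precisely what allows composition errors to be absorbed across scales. A secondary difficulty is ensuring $K\subseteq X^8\cdot o_r(1)$, which requires tracking how small the Gleason--Yamabe kernel can be chosen relative to the open set $\pi[H\cap X^4]$ and rests on a careful no{\hyp}small{\hyp}subgroups argument in $H/o_r(1)$.
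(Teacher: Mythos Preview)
Your outline follows a genuinely different route from the paper's, and the step where the two diverge is also where your proposal has a real gap.

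The paper does \emph{not} construct a single left-invariant finitely additive measure on $X^n\cdot o_r(1)$; in fact the introduction explicitly contrasts its method with the Massicot--Wagner/Sanders measure-based approach. Instead, for each scale $i$ it forms the ideal $\mu_i$ of definable subsets of $X^3$ on which the normalised covering number $\lambda_i$ vanishes, verifies (using an auxiliary counting measure $\nu_i$ on a $2r_i$-net $Z_i$) that $\mu_i$ is a $\D_{r_i}$-rough $S_1$ ideal, then takes $\mu_\infty=\liminf_i\mu_i$, compactifies to $\widehat{\mu}_\infty$, and pushes forward along the quotient $X^3\to X^3/o_r(1)$. This yields an $S_1$ ideal witnessing that $X/o_r(1)$ is a near-subgroup, after which the Lie model (including $K\subseteq X^8\cdot o_r(1)$, the commensurability in (1), and property (3)) is read off from the black-box Theorem~\ref{t:rough lie model} of \cite{rodriguez2020piecewise}, rather than from an ad hoc metric stabilizer argument plus Gleason--Yamabe.

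Your phrase ``taking the ultralimit in $m$ and refining over $i$'' is the place where this matters. The ultralimit in $m$ gives, for each fixed $i$, a subadditive function $\lambda_i$ (or a Keisler measure $\nu_i$), but there is no reason the $\lambda_i$ or $\nu_i$ cohere as $i$ varies, and you do not say how you pass to a single $\mu$. If you simply take a further ultralimit in $i$ you will get \emph{some} measure, but you then lose any direct link between $\mu$-nullity and $o_r(1)$: the assertion that ``$\mu$ vanishes exactly on the sets that become infinitesimal relative to $o_r(1)$'' is neither true as stated nor what is actually required. What is required is the $o_r$-rough $S_1$ property, and the paper's $\liminf$-of-ideals manoeuvre is precisely designed to secure it (via $\liminf\,\medium^{\D_{r_i}}_{\mu_i}\subseteq\medium^{o_r}_{\mu_\infty}$). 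Without an analogue of this step, your stabilizer $H$ need not sit inside $X^4\cdot o_r(1)$, and the commensurability (1) and the containment $K\subseteq X^8\cdot o_r(1)$ are unsupported. Likewise, you invoke a ``metric analogue of the Sanders--Croot--Sisask--Hrushovski stabilizer theorem'' as if it were available; the paper does not prove such a statement from scratch but reduces to the existing piecewise-hyperdefinable Stabilizer Theorem and Lie model theorem, which already package the Gleason--Yamabe step and the bound on $K$.
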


As a consequence, we get the following corollaries for metric approximate subgroups. 

\begin{coro} \label{c:corollary 1} Fix $k,C,n,s\in\N$. There is $m=m(k^8C,n,s)\in\N$ such that the following holds:\smallskip

Let $G$ be a bi{\hyp}invariant metric group and $X$ a $(k,2^{-m})${\hyp}metric approximate subgroup. Assume \[\Nn_{2^{-m}}(X)\leq C^m\cdot \Nn_1(X)<\infty.\] Then, there are $I\subseteq \{1,\ldots,m\}$ with $|I|=n$ and a symmetric subset $Y$ with $x^{-1}Y^nx\subseteq \D_{2^{-s}}(X^4)$ for all $x\in X^n$ such that $Y^n\subseteq \D_{2^{-s}}(X^4)$ and 
\[\Nn_{2^{-i}}(Y)\geq \frac{1}{m} \Nn_{2^{-i}}(X) \mathrm{\ for\ all\ }i\in I.\]
\end{coro}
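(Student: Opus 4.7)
The plan is to prove this by contradiction using the Metric Lie Model (\cref{t:main theorem easy}). Suppose the corollary fails for some fixed $k, C, n, s$: then for every $m \in \N$ we may choose a bi-invariant metric group $G_m$ and a $(k, 2^{-m})$-metric approximate subgroup $X_m$ with $\Nn_{2^{-m}}(X_m) \leq C^m \Nn_1(X_m) < \infty$ such that no admissible pair $(I, Y)$ exists for $X = X_m$.

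The first step is to extract, for each $m$, a sub-sequence of dyadic scales on which the hypothesis of \cref{t:main theorem easy} holds. Iterating the $(k, 2^{-m})$-approximate subgroup property and using left-invariance of the metric yields $\Nn_r(X_m^9) \leq k^8 \Nn_{r/2}(X_m)$ for $r$ sufficiently above $2^{-m}$ (an $r$-cover of $X_m$ inflates to a $2r$-cover of $X_m \D_{r}(1)$). The total-growth constraint $\Nn_{2^{-m}}(X_m) \leq C^m \Nn_1(X_m)$ caps the telescoping sum of log-ratios of consecutive covering numbers, so a window-pigeonhole argument produces at least $m/2$ indices $i \leq m$ on which $\Nn_{2^{-(i+1)}}(X_m) \leq C' \Nn_{9 \cdot 2^{-i}}(X_m)$ for $C'$ depending only on $C$, hence $\Nn_{2^{-i}}(X_m^9) \leq k' \Nn_{9 \cdot 2^{-i}}(X_m)$ with $k'$ a function of $k$ and $C$. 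Re-labelling these good indices gives a sequence of parameters satisfying the hypothesis of the main theorem; passing to a non-principal ultraproduct and applying \cref{t:main theorem easy} produces a Lie model $\pi\colon H \to L = H/K$ of $G \cdot o_r(1)$ satisfying conditions (1)--(3).

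In the connected Lie group $L$, I would pick a compact symmetric neighborhood $V$ of the identity small enough that $V^n$ lies in the interior of $\pi[H \cap X^4]$, and lift it to a symmetric $Y \subseteq H \cap \pi^{-1}(V) \cap \D_\epsilon(1)$ with $\epsilon \in o_r(1)$ (possible because $o_r(1) \subseteq K$, so the fibres of $\pi$ are saturated by infinitesimals). Then $Y^n \subseteq X^4 \cdot o_r(1)$ by the choice of $V$; and because bi-invariance makes conjugation an isometry, the ball $\D_{n\epsilon}(1)$ containing $Y^n$ is conjugation-invariant, so $x^{-1} Y^n x \subseteq X^4 \cdot o_r(1)$ for every $x \in X^n$. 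By {\L}o\'s's theorem this descends to $Y_m^n \subseteq \D_{2^{-s}}(X_m^4)$ (and the analogous conjugation containment) at each sufficiently large $m$ at which the infinitesimal ball is witnessed by $\D_{2^{-s}}(1)$. The covering lower bound at the chosen scales comes from Haar mass: $V$ has positive Haar measure in $L$ while $\pi[H \cap X^4]$ is compact, and in a Lie model the covering numbers at small scales are proportional to Haar volumes, so $\Nn_{r_{i,m}}(Y_m) \geq c \cdot \Nn_{r_{i,m}}(X_m)$ at ultrafilter-many scales for some fixed $c > 0$ depending only on the lift. Choosing any $n$ of the good scales yields a set $I$; for $m \geq 1/c$, the bound $\Nn_{2^{-i}}(Y_m) \geq \frac{1}{m} \Nn_{2^{-i}}(X_m)$ holds throughout $I$, contradicting the choice of counterexample.

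The delicate points will be: (i) maintaining the containment with exactly $X^4$ (rather than $X^{12}$ or worse) in $Y^n \subseteq \D_{2^{-s}}(X^4)$, for which the choice of $V$ inside the interior of $\pi[H \cap X^4]$ given by condition (3) of \cref{t:main theorem easy} is essential; (ii) handling conjugation by $X^n$, which is where bi-invariance does the heavy lifting by turning conjugation into an isometry that preserves small balls around the identity --- without this, one would have to track a larger power such as $X^{4+2n}$; and (iii) translating the Lie-theoretic Haar-mass statement into the discrete covering inequality $\Nn_{2^{-i}}(Y_m) \geq \frac{1}{m} \Nn_{2^{-i}}(X_m)$ on enough scales, which requires careful use of the finite-scale doubling behaviour established in the first pigeonhole.
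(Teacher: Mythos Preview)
Your overall architecture (negation, ultraproduct, and the pigeonhole extraction of good scales in the spirit of \cref{l:lemma sequence of growth in doubling scale}) matches the paper, but the heart of the argument diverges and contains a real gap.

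The paper does \emph{not} pass through the Lie model for this corollary. It uses only the near{\hyp}subgroup conclusion of \cref{t:main theorem} and then applies the Stabilizer \cref{t:stabilizer theorem} directly. That theorem produces a wide $\bigwedge_\omega${\hyp}definable \emph{normal subgroup} $S$ with $S\subseteq X^4\cdot o_r(1)$. Normality gives $x^{-1}S^nx=S$ for every $x\in X^n$, and being a subgroup gives $S^n=S\subseteq X^4\cdot o_r(1)$; a single compactness step then yields a definable symmetric $Y\supseteq S$ with $x^{-1}Y^nx\subseteq \D_{r_s}(X^4)$. Your Lie{\hyp}model route cannot reproduce the exponent~$4$: in \cref{t:main theorem easy} the kernel $K$ is only contained in $X^8\cdot o_r(1)$, and any preimage $\pi^{-1}(V)$ is a union of $K${\hyp}cosets, so $(\pi^{-1}(V))^n\subseteq X^4\cdot K\subseteq X^{12}\cdot o_r(1)$ at best. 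Your attempted repair, taking $Y\subseteq \D_\varepsilon(1)$ with ``$\varepsilon\in o_r(1)$'', is incoherent: if $Y$ sits inside an infinitesimal ball then $\Nn_{2^{-i}}(Y)=1$ for every standard $i$, destroying the covering lower bound you need. Likewise, the bi{\hyp}invariance observation that conjugation preserves balls is beside the point --- the target set is $\D_{2^{-s}}(X^4)$, not a ball --- and in fact the paper's argument works already in the Lipschitz (not bi{\hyp}invariant) setting of \cref{c:corollary 1 general}, precisely because normality of $S$ does the work.

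The second gap is the covering inequality. Your claim that ``covering numbers at small scales are proportional to Haar volumes'' is not established anywhere and is not how the paper proceeds. The paper's ideal $\mu$ is built from $\mu_\infty=\liminf_i\mu_i$, where $Y\in\mu_i$ means $\lambda_i(Y)=\Nn^{\cov}_{r_i}(Y/X^3)/\Nn^{\cov}_{r_i}(X^3/X^3)=0$. Wideness of $S$ (equivalently of a translate $aS$ inside $X^3$) means exactly that $\lambda_i(Y_a)>0$ for infinitely many $i$; unwinding this with \cref{l:subadditivity and monotonicity of discretisation numbers,l:equivalence discretisation numbers} gives $\Nn_{r_i}(Y)\geq\frac{1}{m_0}\Nn_{r_i}(X)$ on a set $I$ of $n$ indices, for some finite $m_0$. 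No reference to Haar measure on $L$ is needed or available.
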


\begin{coro} \label{c:corollary 2} Fix $k,C,N,n,s\in\N$. There are $c\coloneqq c(k^8C,N)\in\N$ and $m\coloneqq m(k^8C,N,n,s)\in\N$ such that the following holds:\smallskip

Let $G$ be a bi{\hyp}invariant metric group and $X$ a $(k,2^{-m})${\hyp}metric approximate subgroup. Assume that $\dd(g^N,1)\leq 2^{-m}$ for every $g\in X^8$ and \[\Nn_{2^{-m}}(X)\leq C^m\cdot\Nn_1(X)<\infty.\] Then, there is a symmetric subset $Y\subseteq X^{16}$ such that $Y$ and $X^2$ are $(c,2^{-s})${\hyp}commensurable and $Y^n\subseteq \D_{2^{-s}}(Y)$.
\end{coro}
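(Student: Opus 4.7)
The plan is to argue by contradiction via the Metric Lie Model theorem, using the $N$-torsion hypothesis to collapse the resulting Lie group to a point. Suppose the corollary fails for some $(k, C, N, n, s)$: then there is a sequence $(G_m, X_m)$ of bi-invariant metric groups with $X_m$ a $(k, 2^{-m})${\hyp}metric approximate subgroup, $\dd(g^N, 1) \le 2^{-m}$ for $g \in X_m^8$, and $\Nn_{2^{-m}}(X_m) \le C^m \Nn_1(X_m) < \infty$, yet admitting no $Y$ as in the conclusion. The bound $C^m$ on the total growth of covering numbers lets us extract by pigeonhole, for each $m$, a subsequence of dyadic scales $r_{i,m} = 2^{-j_{i,m}}$, of length tending to infinity, with $2 r_{i,m} \le r_{i-1,m}$ and uniformly bounded doubling; combined with the approximate subgroup property (which controls $\Nn_r(X_m^9)$ in terms of $\Nn_r(X_m)$ by a factor involving $k^8$), these scales satisfy hypothesis~(c) of \cref{t:main theorem easy} with constant depending only on $k^8 C$.

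Pass to a non-principal ultraproduct $(G^*, X, (r_i)_i)$ with enough structure and apply \cref{t:main theorem easy} to obtain a connected Lie model $\pi \colon H \to L$ with kernel $K$ satisfying $o_r(1) \trianglelefteq K \subseteq X^8 \cdot o_r(1)$ and conditions (1)--(3). By ultraproduct transfer, $g^N \in o_r(1) \subseteq K$ for every $g \in X^8$, so $\pi(g)^N = 1$ in $L$ for every $g \in H \cap X^8$. In particular $\pi[H \cap X^4]$, which by (3) is a compact neighborhood of the identity of $L$, consists of $N$-torsion elements. For any $v$ in the Lie algebra of $L$ and sufficiently small $t > 0$, $\exp(tv)$ lies in the interior of this neighborhood, so $\exp(Ntv) = \exp(tv)^N = 1$; local injectivity of $\exp$ at $0$ forces $Ntv = 0$, hence $v = 0$. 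The Lie algebra thus vanishes and, $L$ being connected, $L$ is trivial, so $H = K \subseteq X^8 \cdot o_r(1)$.

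Define $Y^* := H \cap X^8$, a symmetric subset of $X^{16}$ contained in the subgroup $H$, so that $Y^{*n} \subseteq H$. For any $h \in H$, a decomposition $h = x e$ with $x \in X^8$ and $e \in o_r(1) \subseteq H$ gives $x = h e^{-1} \in H \cap X^8 = Y^*$, so $H \subseteq Y^* \cdot o_r(1)$ and therefore $Y^{*n} \subseteq Y^* \cdot o_r(1)$. Condition~(1) of the Lie model, combined with the approximate subgroup inclusion relating $X^8$ and $X^2$, yields commensurability of $Y^*$ with $X^2$ modulo $o_r(1)$, with a constant $c$ depending only on $k^8 C$ and $N$. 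A standard transfer back to the finite setting (choosing $m$ large in terms of $n$ and $s$ so that $o_r$ is witnessed at scale $2^{-s}$) then produces $Y_m \subseteq X_m^{16}$ with the required properties, contradicting the failure of the corollary.

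The main obstacle is the trivialization of $L$: one must carefully use that an open neighborhood of the identity in a connected Lie group consisting of $N$-torsion elements is incompatible with a non-zero Lie algebra, via the one-parameter subgroup argument and the local injectivity of $\exp$. Once this is done, constructing $Y^*$ and transferring back are routine, though some care is needed to ensure that $c$ depends only on $k^8 C$ and $N$, and not on $n$ or $s$.
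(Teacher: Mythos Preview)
Your overall strategy---contradiction, ultraproduct, Lie model, use $N$-torsion to collapse $L$---matches the paper's, and your Lie-algebra argument that $N$-torsion on a neighbourhood forces $L$ discrete is correct. The key divergence is which Lie model you invoke. You use \cref{t:main theorem easy}, which yields a \emph{connected} Lie model; discreteness plus connectedness gives $L$ trivial, so $H=K$ is already a subgroup modulo $o_r(1)$ and $Y^*=H\cap X^8$ works directly. This is structurally simpler than the paper's route.

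The paper instead applies \cref{c:metric lie model carolino}, the Carolino variant, which does \emph{not} assert connectedness but does provide an explicit commensurability constant $c_0(k)$ depending only on $k$. With that version $L$ is only known to be discrete, so $\pi(H\cap X^4)$ is a finite $N$-torsion approximate subgroup; the paper then invokes an additional discrete-case lemma (\cref{l:lemma corollary 2}) to extract a genuine finite subgroup $S_0$ with constant $c_1(k,N)$, and pulls back to $Y=\pi^{-1}(S_0)\cap X^{16}$. The product $c_0(k)c_1(k,N)$ gives the required $c(k^8C,N)$.

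The reason for the paper's detour is precisely the issue you flag in your last sentence and leave unresolved: the commensurability in \cref{t:main theorem easy} is bare ``commensurable'' with no explicit bound. In your contradiction argument the sequence of counterexamples is built for a fixed $(n,s)$, hence so is the ultraproduct, hence so is whatever commensurability constant it happens to produce. Your argument therefore yields $c=c(k^8C,N,n,s)$, not the stated $c=c(k^8C,N)$. This is a genuine gap for the corollary as written; the Carolino version is exactly what supplies the missing uniformity, at the cost of losing connectedness and necessitating \cref{l:lemma corollary 2}.
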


\begin{coro} \label{c:corollary 3} Fix $k,C,N\in\N$ and any function $s\map \N\to \N$. There is $m\coloneqq m(k^8C,N,s)\in\N$ such that the following holds:\smallskip

Let $G$ be a bi{\hyp}invariant metric group and $X$ a $(k,2^{-m})${\hyp}metric approximate subgroup. Assume \[\Nn_{2^{-m}}(X)\leq C^m\cdot \Nn_1(X)<\infty.\] Then, for some $c\leq m$, there is a sequence $X_N\subseteq \cdots\subseteq X_0\subseteq X^8$ satisfying the following properties for each $n\leq N$ with $s=s(c)$: 
\[\begin{array}{ll}
\mathrm{(1)}& X^2\mathrm{\ and\ }X_1\mathrm{\ are\ }(c,2^{-s})\mbox{\hyp}\mathrm{commensurable.}\\ 
\mathrm{(2)} & X_{n+1}X_{n+1}\subseteq \D_{2^{-s}}(X_n).\\
\mathrm{(3)} & X_n\mathrm{\ is\ covered\ by\ }c\mathrm{\ translates\ of\ }\D_{2^{-s}}(X_{n+1}).\\
\mathrm{(4)} & x^{-1}X_{n+1}x\subseteq \D_{2^{-s}}(X_n)\mathrm{\ for\ every\ }x\in X_1.\\
\mathrm{(5)} & [X_{n_1},X_{n_2}]\subseteq \D_{2^{-s}}(X_n)\mathrm{\ whenever\ }n< n_1+n_2.\\
\mathrm{(6)} & \{x\in X_0\sth x^{17}\in X_n\}\subseteq X_{n+1}.\\
\mathrm{(7)} &\mathrm{If\ }x,y\in X_0\mathrm{\ with\ }x^2=y^2,\mathrm{\ then\ }y^{-1}x\in \D_{2^{-s}}(X_N).\end{array}\]
\end{coro}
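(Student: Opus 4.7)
The plan is a compactness argument driven by \cref{t:main theorem easy}. Suppose the conclusion fails for some fixed $k,C,N\in\N$ and function $s\map\N\to\N$; then for each $m\in\N$ there is a counterexample $(G_m,X_m)$ where $X_m$ is a $(k,2^{-m})${\hyp}metric approximate subgroup with $\Nn_{2^{-m}}(X_m)\leq C^m\,\Nn_1(X_m)<\infty$ and no chain $X_N\subseteq\cdots\subseteq X_0\subseteq X_m^8$ verifies \emph{(1)--(7)} for any $c\leq m$ with tolerance $2^{-s(c)}$. A pigeonhole on the $m$ dyadic scales combined with the approximate{\hyp}subgroup bound $\Nn_r(X_m^9)\leq k^8\Nn_r(X_m)$ extracts from the regularity hypothesis a family of scales $r_{i,m}$ satisfying the doubling condition of \cref{t:main theorem easy}. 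Passing to a non{\hyp}principal ultraproduct $(G^*,X)$ and applying the theorem yields a connected Lie model $\pi\map H\to L$ with $o_r(1)\trianglelefteq K\subseteq X^8\cdot o_r(1)$.

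The core task is to construct, inside the Lie group $L$, a nested family of symmetric compact neighborhoods $V_N\subseteq\cdots\subseteq V_0\subseteq \pi[H\cap X^4]$ of the identity satisfying the \emph{exact} (not approximate) analogues of \emph{(1)--(7)}. I would set $V_n\coloneqq\exp_L(\rho\lambda^{-n}\Omega)$, where $\Omega\subseteq\mathfrak{l}$ is a fixed symmetric convex neighborhood of $0$ in the Lie algebra, $\lambda\in(2,17)$ is a contraction ratio, and $\rho>0$ is chosen small enough that $V_0\subseteq\pi[H\cap X^4]$. Baker--Campbell--Hausdorff gives \emph{(2)} from $\lambda>2$; the quadratic commutator estimate $|[u,v]|\lesssim|u||v|$ gives \emph{(5)} once $n<n_1+n_2$; Lipschitz behavior of conjugation gives \emph{(4)}; the map $x\mapsto x^{17}$ is a local diffeomorphism with linearization $17\cdot\id$ at the identity, so since $\lambda<17$ its preimages contract enough to remain in $V_{n+1}$, giving \emph{(6)}; and the implicit function theorem applied to $(x,y)\mapsto x^{-2}y^2$ at $(1,1)$ yields local injectivity of squaring, hence \emph{(7)}. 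Compactness of each $V_n$ supplies the cover constant $c_0$ in \emph{(3)}, and \emph{(1)} follows by choosing $V_1$ commensurable to $\pi[H\cap X^4]$, which \cref{t:main theorem easy} identifies with the image of $X^2$.

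Pulling back, set $X_n\coloneqq\pi^{-1}(V_n)\cap X^8$, symmetrized if needed. Each inclusion among the $V_n$ lifts to the corresponding statement about the $X_n$ modulo a thickening by $\ker\pi\cdot o_r(1)=K\subseteq X^8\cdot o_r(1)$; since $o_r(1)=\bigcap_i\D_{r_i}(1)$, this kernel lies inside $\D_{2^{-s(c_0)}}$ in the ultraproduct whenever some $r_{i,m}\leq 2^{-s(c_0)}$, which is automatic for large $m$. By {\L}o\'s's theorem a chain $X_{N,m}\subseteq\cdots\subseteq X_{0,m}\subseteq X_m^8$ with $c=c_0$ and tolerance $2^{-s(c_0)}$ then exists in $G_m$ for $\uu${\hyp}almost every $m$, and taking $m\geq\max(c_0,s(c_0))$ ensures $c\leq m$, contradicting the choice of $(G_m,X_m)$.

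The main obstacle is the simultaneous satisfaction of \emph{(5)} and \emph{(6)}: \emph{(5)} pushes $\lambda$ as large as possible to tighten the commutator cascade, while \emph{(6)} caps $\lambda$ strictly below $17$ so that $17$th{\hyp}power preimages stay in the finer level. The nonempty window $\lambda\in(2,17)$ is viable but must be reconciled with the choice of $\rho$ after the Baker--Campbell--Hausdorff constants of $L$ are fixed; the rest of the argument is routine transfer of infinitesimal statements across $\pi$ and descent through the ultrafilter.
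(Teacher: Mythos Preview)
Your overall architecture coincides with the paper's: ultraproduct of counterexamples, pigeonhole to extract doubling scales (the paper's \cref{l:lemma sequence of growth in doubling scale}), the Metric Lie Model theorem, construction of nested neighbourhoods in $L$ via the exponential and Baker--Campbell--Hausdorff (the paper's \cref{l:lemma corollary 3}, which takes $\lambda=\sqrt[4]{17}$), pullback, and {\L}o\'s transfer.

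There is, however, a real mistake in your pullback step. You claim that inclusions among the $V_n$ lift ``modulo a thickening by $\ker\pi\cdot o_r(1)=K$'' and then that ``this kernel lies inside $\D_{2^{-s(c_0)}}$''. The second assertion is false: $K$ can be as large as $X^8\cdot o_r(1)$ and is never contained in any $\D_{r_i}(1)$. Fortunately no such thickening is required: since $\pi$ is a group homomorphism, an inclusion like $V_{n+1}^2\subseteq V_n$ pulls back \emph{exactly} to $\pi^{-1}(V_{n+1})^2\subseteq\pi^{-1}(V_n)$, and likewise for conjugation, commutators, and $17$th powers.

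What you have not addressed, and what is the genuine source of the $o_r(1)${\hyp}thickenings in the conclusion, is definability. The sets $\pi^{-1}(V_n)$ are only $\bigwedge_\omega${\hyp}definable, so {\L}o\'s cannot be applied to them directly. The paper chooses $U_{n+1}\subseteq U_n^\circ$ so that one can sandwich $\pi^{-1}(U_{n+1})\subseteq Y_n\subseteq\pi^{-1}(U_n)$ with $Y_n$ definable, sets $X'_{n+1}\coloneqq Y_n\cap X^8$, and proves $Y_n\subseteq X'_n\cdot o_r(1)$. Each sandwiching costs an index, so for instance $U_{n+1}^2\subseteq U_n$ becomes $X'_{n+3}X'_{n+3}\subseteq o_r(X'_n)$; a final reindexing $X_n\coloneqq X'_{3n+1}$ restores the shape of (1)--(7). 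Your single contraction ratio $\lambda\in(2,17)$ would need to absorb these index shifts as well; the paper's choice $\lambda=\sqrt[4]{17}$ with property (6) at gap $4$ is tuned so that after the shifts one lands exactly on gap $1$ after reindexing by $3$.
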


The notion of ultraproduct appropriate to metric groups is similar to their use in continuous logic: from an ordinary ultraproduct of metric groups, we retain only the elements at bounded distance from the identity, and then quotient out by the infinitesimals. This involves what is known as a \emph{piecewise hyperdefinable set} in model theory. A systematic theory of piecewise hyperdefinable sets was developed in \cite{rodriguez2020piecewise}, following an ad hoc treatment in an early version of this note. Here, we briefly recall in the third section the main results from \cite{rodriguez2020piecewise} that we will need.

With this in place, we can form the quotient by the infinitesimals $o_r(1)$ and check that $\faktor{X}{o_r(1)}$ satisfies the conditions needed to apply \cref{t:rough lie model}. In particular, we need to show that $X^2$ is a $o_r(1)${\hyp}rough approximate subgroup and that $\faktor{X}{o_r(1)}$ is a near{\hyp}subgroup, i.e. it has an ideal of subsets satisfying various model{\hyp}theoretic properties. 

The Lie model results of \cite{hrushovski2012stable} were obtained using the measure{\hyp}zero ideal of an appropriate measure, as well as other ideals with similar properties. We will make use of this additional flexibility here. By contrast Massicot and Wagner in \cite{massicot2015approximate}, following Sanders as presented in \cite{breuillard2012structure}, make use of the actual measure. While we also start with a measure, we construct an ideal from it in a different way, and then work with the ideal; this will be explained in the third section.

By comparison to the vast field of work on approximate subgroups, there has been little work on the metric case that we are aware of; but two very striking results have appeared which we now describe:

In the abstract setting, there is an extremely interesting foundational work by Gowers and Long \cite{gowers2020partial} regarding approximate associativity in partial Latin squares, that has metric subgroups in its conclusion. It would be very interesting to try to connect a result like ours with their work, to see under what conditions \cite{gowers2020partial} can be improved to give a Lie group rather than an abstract metric group.

There has also been a vast field of work on finite approximate subgroups of linear groups, due to Erd\"{o}s{\hyp}Szemer\'{e}di, Bourgain{\hyp}Katz{\hyp}Tao, Helfgott, Pyber{\hyp}Szab\'{o}, Breuillard{\hyp}Green{\hyp}Tao, and others. In this line, a strong metric version was proved by de Saxc\'{e} \cite{desaxce2014product}. We further discuss the relation of our work with it at the end of this paper.

\

The first section reviews basic properties of discretisation in a metric setting. The second section recalls the model theoretic concepts needed for this paper from \cite{rodriguez2020piecewise}. The ideal of $\bigwedge${\hyp}definable sets that we will use is constructed in Section 3. The fourth section is the proof of the main theorem. We conclude proving the indicated corollaries. 

\

\subsection*{Notations}

\begin{enumerate}[label={\raisebox{0.4ex}{\rule{0.4em}{0.4em}}}, itemsep=2pt, wide]
\item Let $R\subseteq X\times Y$ be a set{\hyp}theoretic binary relation. For $V\subseteq X$, we write $R(V)\coloneqq\{y\in Y\sth \exists x\in V\ (x,y)\in R\}$. We also write $R^{-1}\coloneqq\{(y,x)\sth (x,y)\in R\}$, so $R^{-1}(y)\coloneqq \{x\in X\sth (x,y)\in R\}$ for $y\in Y$ and $R^{-1}(W)\coloneqq \{x\in X\sth \exists y\in W\ (x,y)\in R\}$ for $W\subseteq Y$. This notation is also used for partial functions, which are always identified with their graphs.
\item The cardinality of a set $X$ is denoted by $|X|$. The set of natural numbers with $0$ is denoted by $\N$. We also write $\omega\coloneqq \N\coloneqq \aleph_0$. 
\item Here, definable means definable with parameters. For a set of parameters $A$, we write $\bigwedge_A${\hyp}definable for type{\hyp}definable over $A$. Similarly, $\bigvee_A${\hyp}definable means cotype{\hyp}definable over $A$. We also use cardinals and cardinal inequalities. In that case, the subscript should be read as an anonymous set of parameters whose size satisfies the indicated condition. For example, $\bigwedge_{\omega}${\hyp}definable means type{\hyp}definable over a countable set of parameters. 
\item We use product notation for groups. Also, unless otherwise stated, we consider the group acting on itself on the left. In particular, by a translate we mean a left translate. For subsets $X$ and $Y$ of a group, we write $XY$ for the set of pairwise products. We abbreviate $X^n\coloneqq XX^{n-1}$ and $X^{-n}\coloneqq (X^{-1})^n$ for $n\in\N$. A subset $X$ of a group is called \emph{symmetric} if $1\in X=X^{-1}$. For subsets $X$ and $Y$ of a group, we write $Y^X\coloneqq \{y^x\sth x\in X,\ y\in Y\}$ where $y^x\coloneqq x^{-1}yx$. We say that $X$ \emph{normalises} $Y$ if $Y^X\subseteq Y$. For subsets $X$ and $Y$ of a group, we write $[X,Y]\coloneqq \{[x,y]\sth x\in X,\ y\in Y\}$ where $[x,y]\coloneqq x^{-1}y^{-1}xy$.
\end{enumerate}

\section{Discretisations}
Through this section we work on a metric space with metric $\dd$. For $r\in \R_{\geq 0}$, we define the \emph{(closed) $r${\hyp}distance relation} as $\D_r\coloneqq \{(x,y)\sth \dd(x,y)\leq r\}$. For a subset $X$, the \emph{(closed) $r${\hyp}thickening of $X$} is the set $\D_r(X)=\{y\sth \dd(x,y)\leq r\ \mathrm{for\ some\ }x\in X\}$. In particular, for a point $x$, $\D_r(x)$ is the closed ball of radius $r$ at $x$. 

Recall that a set $Z$ is \emph{$r${\hyp}separated} if $\dd(z,z')>r$ for every $z,z'\in Z$ with $z\neq z'$. An \emph{$r${\hyp}discretisation} of $X$ is an $r${\hyp}separated finite subset $Z\subseteq X$ of maximal size. The \emph{$r${\hyp}discretisation number of $X$} is the size of the $r${\hyp}discretisations of $X$, i.e. the maximum size $\Nn_r(X)$ of a finite $r${\hyp}separated subset of $X$. We write $\Nn_r(X)=\infty$ when there are no $r${\hyp}discretisations of $X$. 

A \emph{covering $r${\hyp}discretisation of $X$ relative to $Y$} is a finite subset $Z\subseteq Y$ of minimal size such that $X\subseteq \D_r(Z)$. The \emph{covering $r${\hyp}discretisation number of $X$ relative to $Y$} is the size of the covering $r${\hyp}discretisations of $X$ relative to $Y$, i.e. the minimum size $\Nn^\cov_r(X/Y)$ of a finite subset $Z$ of $Y$ such that $X\subseteq \D_r(Z)$. We write $\Nn^\cov_r(X/Y)=\infty$ if $X$ has no covering $r${\hyp}discretisations relative to $Y$.

We recall now some basic properties of discretisation numbers and covering discretisation numbers in \cref{l:subadditivity and monotonicity of discretisation numbers,l:continuity of discretisation numbers,l:equivalence discretisation numbers}. We omit the easy proofs, leaving their elaboration to the readers.

Discretisations and covering discretisations are not unique but their numbers are indeed unique. Thus, a natural way of measuring the discreteness of a set consists on studying its (covering) discretisation numbers. Our first remark is that, although the functions $\Nn_r(X)$ and $\Nn^\cov_r(X/Y)$ are not measures, they still enjoy one of the main properties of measures. Recall that a function $\nu$ on sets is \emph{subadditive} if $\nu(A\cup B)\leq \nu(A)+\nu(B)$ for any $A$ and $B$. 

\begin{lem} \label{l:subadditivity and monotonicity of discretisation numbers} The functions $\Nn_r(X)$ and $\Nn^\cov_r(X/Y)$ are subadditive on $X$. Also, $\Nn_r(X)$ is increasing on $X$ and decreasing on $r$, and $\Nn^\cov_r(X/Y)$ is increasing on $X$ and decreasing on $r$ and $Y$. Obviously, $\Nn_r(X)=0$ if and only if $\Nn^\cov_r(X/Y)=0$, if and only if $X=\emptyset$.
\end{lem}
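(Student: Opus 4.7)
The plan is to verify each assertion directly from the definitions; all the statements reduce to routine manipulations of $r${\hyp}separated sets and $r${\hyp}covers, and there is no substantive obstacle.

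For subadditivity of $\Nn_r$, I would take a maximal $r${\hyp}separated subset $Z\subseteq A\cup B$ of size $\Nn_r(A\cup B)$ and write $Z=(Z\cap A)\cup (Z\cap B)$; since each piece remains $r${\hyp}separated inside the corresponding ambient set, one gets $\Nn_r(A\cup B)=|Z|\leq |Z\cap A|+|Z\cap B|\leq \Nn_r(A)+\Nn_r(B)$. For subadditivity of $\Nn^\cov_r(\tbullet/Y)$, I would pick covering $r${\hyp}discretisations $Z_A,Z_B\subseteq Y$ of $A$ and $B$ relative to $Y$ and check that $Z_A\cup Z_B\subseteq Y$ is an $r${\hyp}cover of $A\cup B$, so $\Nn^\cov_r(A\cup B/Y)\leq \Nn^\cov_r(A/Y)+\Nn^\cov_r(B/Y)$ (with the convention that $\infty$ absorbs, handling the case where either side cannot be covered from $Y$).

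For the monotonicities: any $r${\hyp}separated subset of $X$ remains $r${\hyp}separated in a superset, so $\Nn_r$ is increasing in $X$; any $r${\hyp}cover of $X'\supseteq X$ relative to $Y$ still covers $X$, so $\Nn^\cov_r$ is increasing in $X$. For the dependence on the radius, if $r\leq r'$, every $r'${\hyp}separated set is $r${\hyp}separated and every $r${\hyp}cover is an $r'${\hyp}cover via $\D_r(Z)\subseteq \D_{r'}(Z)$, showing that both quantities are decreasing in $r$. For the dependence on $Y$, if $Y\subseteq Y'$ then any $r${\hyp}cover of $X$ contained in $Y$ is automatically contained in $Y'$, so $\Nn^\cov_r(X/Y')\leq \Nn^\cov_r(X/Y)$.

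Finally, the equivalence with emptiness follows because the empty set is vacuously $r${\hyp}separated and vacuously covers $\emptyset$ (with size $0$), whereas for $X\neq \emptyset$ any singleton $\{x\}\subseteq X$ is an $r${\hyp}separated subset witnessing $\Nn_r(X)\geq 1$, and covering a single point of $X$ requires at least one element of $Y$, so $\Nn^\cov_r(X/Y)\geq 1$ (or $\infty$ if $Y$ admits no such cover). The only mild bookkeeping concern is keeping track of the value $\infty$ for $\Nn^\cov_r$ when $Y$ is too small to cover $X$, but under the usual conventions on $\infty$ all the inequalities hold trivially in that regime, so there is no real difficulty in the argument.
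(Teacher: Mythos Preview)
Your proposal is correct and is precisely the routine verification the paper has in mind: the authors explicitly omit the proof as easy, and your direct unpacking of $r${\hyp}separated sets and $r${\hyp}covers is the intended elaboration. The only cosmetic point is that in the subadditivity argument for $\Nn_r$ you should take an arbitrary finite $r${\hyp}separated $Z\subseteq A\cup B$ rather than a ``maximal'' one, so that the bound $|Z|\leq \Nn_r(A)+\Nn_r(B)$ also handles the case $\Nn_r(A\cup B)=\infty$; this is a trivial rewording and not a gap.
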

Furthermore, they are continuous according to their monotony:

\begin{lem} \label{l:continuity of discretisation numbers} For any $X,X_1,X_2,\ldots$ and any $r\in\R_{\geq 0}$, 
\[\Nn_r(X)=\sup_{\varepsilon>0}\Nn_{r+\varepsilon}(X)\mathrm{\ and\ }\Nn_r({\textstyle\bigcup^{\infty} X_n})=\sup_{m\in\N}\Nn_r({\textstyle\bigcup^{m}X_n}).\]

For any $X,X_1,X_2,\ldots$, any $r\in\R_{\geq 0}$ and any $Y$ compact in the metric topology, 
\[\Nn^\cov_r(X/Y)=\sup_{\varepsilon>0}\Nn^\cov_{r+\varepsilon}(X/Y)\mathrm{\ and\ }\Nn^\cov_r({\textstyle\bigcup^{\infty} X_n}/Y)=\sup_{m\in\N}\Nn^\cov_r({\textstyle\bigcup^{m} X_n}/Y).\]
\end{lem}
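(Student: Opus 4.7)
The plan is to treat each of the four identities in turn. In every case \cref{l:subadditivity and monotonicity of discretisation numbers} already yields one inequality via monotonicity (the suprema on the right are at most the left-hand sides because $\Nn_r$ and $\Nn^\cov_r$ are decreasing in $r$ and $\Nn_r$, $\Nn^\cov_r$ are monotone in $X$), so the substantive task is the reverse inequality.

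The two identities for $\Nn_r$ need no topological hypothesis. If $Z \subseteq X$ is a finite $r$-separated set of size $n$, then $\delta := \min\{\dd(z,z') : z \neq z' \in Z\} > r$, so $Z$ is also $(r+\varepsilon)$-separated for any $0 < \varepsilon < \delta - r$; taking $n$ to run through all attainable sizes (or to tend to infinity when $\Nn_r(X) = \infty$) gives $\Nn_r(X) \leq \sup_{\varepsilon > 0} \Nn_{r+\varepsilon}(X)$. For the union identity, any finite $r$-separated subset of $\bigcup^\infty X_n$ lies in some $\bigcup^m X_n$ by finiteness of its support, and therefore witnesses that particular $\Nn_r(\bigcup^m X_n)$.

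The covering identities are the place where compactness of $Y$ genuinely enters, by a diagonal extraction. Assume $\sup_{\varepsilon > 0} \Nn^\cov_{r+\varepsilon}(X/Y) = k < \infty$ (otherwise there is nothing to prove) and, for each $n$, pick $Z_n = \{z_{n,1},\ldots,z_{n,k}\} \subseteq Y$ covering $X$ by $\D_{r+1/n}$-balls. Compactness of $Y$, applied coordinate by coordinate, produces a subsequence along which $z_{n,i} \to z_i \in Y$ for each $i \leq k$. Given $x \in X$, choose at each stage an index $i(n)$ with $\dd(x, z_{n,i(n)}) \leq r + 1/n$, extract a further subsequence on which $i(n)$ is constantly some $i$, and pass to the limit using continuity of $\dd$ to conclude $\dd(x, z_i) \leq r$. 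Hence $X \subseteq \D_r(\{z_1, \ldots, z_k\})$ and $\Nn^\cov_r(X/Y) \leq k$. The union identity for $\Nn^\cov$ is obtained by essentially the same extraction starting from a uniform bound $\Nn^\cov_r(\bigcup^m X_n / Y) \leq k$, using for each $x \in X_n$ only the covering sets with $m \geq n$.

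The only substantive obstacle is the one just addressed: ensuring the limit points $z_i$ remain in $Y$, which is where compactness is indispensable and without which the covering equalities can fail. The rest is routine bookkeeping of nested subsequences and the continuity of the metric.
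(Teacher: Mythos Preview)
Your proof is correct; the paper omits its own proof of this lemma, stating only that it ``leave[s] their elaboration to the readers,'' and your argument is precisely the standard elaboration one would expect. The monotonicity direction is immediate from \cref{l:subadditivity and monotonicity of discretisation numbers}, and your handling of the nontrivial direction---a finite witness argument for $\Nn_r$, and a diagonal extraction in the compact $Y$ for $\Nn^{\cov}_r$---is the natural route and is carried out cleanly.
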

\begin{rmk} \cref{l:subadditivity and monotonicity of discretisation numbers,l:continuity of discretisation numbers} together imply in particular that $\Nn_r(\tbullet)$ and $\Nn^\cov_r(\tbullet/Y)$ are outer{\hyp}measures for any $r\in\R_{\geq 0}$ and $Y$ compact.
\end{rmk} 
Covering discretisations are extrinsic, i.e. they depend on the ambient space $Y$. Therefore, it is natural to work preferably with discretisations than with covering discretisations. Despite this consideration, the choice is still arbitrary. Indeed, the following lemma shows that they are in fact essentially equivalent up to a constant factor of $2$ in the scale.
\begin{lem} \label{l:equivalence discretisation numbers} For any $X\subseteq Y$ and $r\in\R_{>0}$,
$\Nn_{2r}(X)\leq \Nn^\cov_r(X/Y)\leq \Nn_r(X).$
\end{lem}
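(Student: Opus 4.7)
The plan is to prove both inequalities by standard maximality/pigeonhole arguments on discretisations and coverings.

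For the right inequality $\Nn^{\cov}_r(X/Y)\leq \Nn_r(X)$, first note that if $\Nn_r(X)=\infty$ the inequality is trivial, so assume it is finite. Let $Z\subseteq X$ be an $r${\hyp}discretisation of $X$, i.e. a finite $r${\hyp}separated subset of maximal size $|Z|=\Nn_r(X)$. I would show $X\subseteq \D_r(Z)$: if some $x\in X$ lay outside $\D_r(Z)$, then $\dd(x,z)>r$ for all $z\in Z$, so $Z\cup\{x\}$ would still be $r${\hyp}separated, contradicting the maximality of $Z$. Since $Z\subseteq X\subseteq Y$, the set $Z$ is a covering of $X$ by $r${\hyp}balls centred at points of $Y$, so $\Nn^{\cov}_r(X/Y)\leq |Z|=\Nn_r(X)$.

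For the left inequality $\Nn_{2r}(X)\leq \Nn^{\cov}_r(X/Y)$, if $\Nn^{\cov}_r(X/Y)=\infty$ the inequality is trivial, so assume it is finite. Let $W\subseteq Y$ be a covering $r${\hyp}discretisation of $X$ relative to $Y$ with $|W|=\Nn^{\cov}_r(X/Y)$ and $X\subseteq \D_r(W)$, and let $Z\subseteq X$ be any finite $2r${\hyp}separated subset. For each $z\in Z$, pick $\varphi(z)\in W$ with $\dd(z,\varphi(z))\leq r$. I would check that $\varphi$ is injective: if $\varphi(z_1)=\varphi(z_2)=w$, then by the triangle inequality $\dd(z_1,z_2)\leq 2r$, forcing $z_1=z_2$ by $2r${\hyp}separation. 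Hence $|Z|\leq|W|$, and taking the supremum over such $Z$ gives $\Nn_{2r}(X)\leq \Nn^{\cov}_r(X/Y)$.

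There is no real obstacle here; both arguments are elementary. The only minor care needed is the case analysis when either discretisation number is infinite, which is handled trivially, and being careful that $\Nn_r$ is defined via the maximum of finite $r${\hyp}separated subsets so that a genuine maximal $Z$ exists whenever $\Nn_r(X)$ is finite.
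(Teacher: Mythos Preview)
Your proposal is correct and is exactly the standard maximality/pigeonhole argument one would expect; the paper in fact omits the proof entirely, stating that it is left to the reader, so there is nothing further to compare.
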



A \emph{(left) metric group} is a group together with a metric invariant under left translations. Note that, in this case, $\D_r(X)=X\D_r(1)$. We prove now some basic lemmas that we will need for our main \cref{t:main theorem}.
\begin{lem} \label{l:lemma 1 application to metric groups} Let $G$ be a metric group, $X\subseteq G$, $m\in\N_{\geq 2}$ and $r,k\in\R_{\geq 0}$. Assume that \[\Nn_r(XX^{-1}X)\leq k\cdot \Nn_{(2m+1) r}(X)<\infty.\] Then:
\begin{enumerate}[label={\textnormal{(\arabic*)}}, wide]
\item \label{itm:lemma 1 application to metric groups:1} For every $b\in X$, $\Nn_{r}(X\cap \D_{mr}(b))\leq k$.
\item \label{itm:lemma 1 application to metric groups:2} For every $Y\subseteq X$, $\Nn^\cov_{r}(Y/X)\leq k\cdot \Nn^\cov_{{mr}}(Y/X)$.
\end{enumerate}
\end{lem}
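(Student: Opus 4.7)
My plan is to establish (1) by constructing many $r${\hyp}separated points inside $XX^{-1}X$ and then to deduce (2) as a quick consequence. Fix $b\in X$, let $Z\subseteq X\cap\D_{mr}(b)$ be an $r${\hyp}discretisation (so $|Z|=\Nn_r(X\cap\D_{mr}(b))$), and let $Y\subseteq X$ be an $(2m+1)r${\hyp}discretisation (so $|Y|=\Nn_{(2m+1)r}(X)$). The natural family to examine is
\[\{\,yb^{-1}z \sth y\in Y,\ z\in Z\,\}\subseteq XX^{-1}X,\]
since $b\in X$ and $Y,Z\subseteq X$. If I can show these $|Y|\cdot|Z|$ points are pairwise $r${\hyp}separated, then the hypothesis $\Nn_r(XX^{-1}X)\leq k\cdot|Y|$ immediately forces $|Z|\leq k$.

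Separation within a fixed $y$ is trivial by left invariance: $\dd(yb^{-1}z,yb^{-1}z')=\dd(z,z')>r$. The main (if mild) obstacle is separation across distinct $y,y'\in Y$, where the absence of right invariance blocks a direct cancellation. I would handle this via the triangle inequality
\[\dd(yb^{-1}z,y'b^{-1}z')\geq \dd(y,y')-\dd(y,yb^{-1}z)-\dd(y',y'b^{-1}z'),\]
rewriting the last two terms by left invariance as $\dd(1,b^{-1}z)=\dd(b,z)\leq mr$ and similarly $\dd(b,z')\leq mr$. Since $\dd(y,y')>(2m+1)r$, the right{\hyp}hand side is strictly greater than $r$. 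This is exactly why the hypothesis is formulated at the scale $(2m+1)r$: the extra $r$ beyond $2mr$ absorbs the triangle{\hyp}inequality slack coming from the non{\hyp}bi{\hyp}invariance of the metric.

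For (2), I would bootstrap from (1). Pick a covering $mr${\hyp}discretisation $C\subseteq X$ of $Y$ relative to $X$, so $|C|=\Nn^\cov_{mr}(Y/X)$ and $Y\subseteq \bigcup_{c\in C}\bigl(X\cap\D_{mr}(c)\bigr)$. For each $c\in C\subseteq X$, let $D_c$ be a maximal $r${\hyp}separated subset of $X\cap\D_{mr}(c)$: by (1) applied with $b=c$, $|D_c|\leq k$, and by maximality $D_c$ also $r${\hyp}covers $X\cap\D_{mr}(c)$ inside $X$. Then $\bigcup_{c\in C}D_c\subseteq X$ is an $r${\hyp}cover of $Y$ of size at most $k\cdot|C|$, giving $\Nn^\cov_r(Y/X)\leq k\cdot\Nn^\cov_{mr}(Y/X)$, which is (2).
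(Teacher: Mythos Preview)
Your proof is correct. For part (1), your argument and the paper's share the same core mechanism---pack many $r${\hyp}separated points into $XX^{-1}X$ using a $(2m+1)r${\hyp}discretisation of $X$ together with left translations---but you organise it more directly. The paper first takes, for each point $z$ of the $(2m+1)r${\hyp}discretisation, a maximal $r${\hyp}separated set $W_z\subseteq \D_{mr}(z)\cap XX^{-1}X$; since the sizes $n_z=|W_z|$ may vary with $z$, an averaging step is needed to find some $z_0$ with $n_{z_0}\leq k$, and only then is the isometry $x\mapsto z_0b^{-1}x$ used to transfer the bound to the ball at $b$. You instead apply the isometries $x\mapsto yb^{-1}x$ at the outset, so that every translated copy has the same size $|Z|$ and the averaging step disappears. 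The triangle{\hyp}inequality computation separating points across distinct $y,y'$ is identical in both proofs. For part (2), your argument is essentially the paper's; the paper phrases the step ``$|D_c|\leq k$ and $D_c$ $r${\hyp}covers $X\cap\D_{mr}(c)$'' as an appeal to \cref{l:equivalence discretisation numbers}, which encodes exactly the maximality observation you spell out.
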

\begin{proof} 
\begin{enumerate}[label={(\arabic*)}, wide]
\item[\hspace{-1.4em}\setcounter{enumi}{1}\theenumi] Let $Z$ be a $(2m+1)r${\hyp}discretisation of $X$. For each $z\in Z$, let $W_z$ be an $r${\hyp}discretisation of $\D_{mr}(z)\cap XX^{-1}X$. For different $z,z'\in Z$, we have that
\[(2m+1)r< \dd(z,z')\leq \dd(z,w)+\dd(w,w')+\dd(w',z')\leq 2mr+\dd(w,w'),\]
for any $w\in W_z$ and $w'\in W_{z'}$. Therefore, $\dd(W_z,W_{z'})>r$, so $W_z\cap W_{z'}=\emptyset$. Write $n_z\coloneqq |W_z|$ and $n\coloneqq |Z|=\Nn_{(2m+1)r}(X)$. Then,
\[\sum_{z\in Z} n_z=\left|\bigcup_{z\in Z}W_z\right|\leq N_r(XX^{-1}X).\]
In particular, $\min\, n_z\leq \frac{1}{n}\sum n_z\leq k$. Take $z_0\in Z$ with $n_{z_0}=\min\, n_z$. For each $b\in X$, consider $f_b\map X\to XX^{-1}X$ given by $f_b(x)=z_0b^{-1}x$. Clearly, $f_b$ is an isometry. Note $f_b(\D_{mr}(b)\cap X)\subseteq \D_{mr}(z_0)\cap XX^{-1}X$. Also, if $Y\subseteq X$ is $r${\hyp}separated, then $f_b(Y)$ is $r${\hyp}separated. Thus, $\Nn_{r}(\D_{mr}(b)\cap X)\leq \Nn_{r}(\D_{mr}(z_0)\cap XX^{-1}X)=n_{z_0}\leq k$.
\item By definition, $Y$ can be covered by $\Nn^\cov_{{mr}}(Y/X)$ balls of radius ${mr}$ centred at points of $X$. By \ref*{itm:lemma 1 application to metric groups:1} and \cref{l:equivalence discretisation numbers}, each one of these balls restricted to $Y\subseteq X$ can be covered by $k$ balls of radius $r$ centred at points of $X$. Hence, we conclude that $\Nn^\cov_{r}(Y/X)\leq k\cdot \Nn^\cov_{{mr}}(Y/X)$. \qedhere
\end{enumerate}
\end{proof}

\begin{lem} \label{l:lemma 2 application to metric groups} 
Let $G$ be a metric group, $X\subseteq G$ and $r,k\in \R_{\geq 0}$. Assume that
\[\Nn_{r}(XX^{-1}X)\leq k\cdot \Nn_{9r}(X)<\infty.\] 
Let $Z$ be a $2r${\hyp}discretisation of $X$. Then, for any $Y\subseteq X$, 
\[\Nn^\cov_{2r}(Y/X)\leq \left|Z\cap \D_{2r}(Y)\right|\leq k\cdot \Nn^\cov_{2r}(Y/X).\]
\end{lem}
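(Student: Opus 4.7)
The plan is to prove the two inequalities separately, with the central observation being the dual role played by the $2r${\hyp}discretisation $Z$ of $X$: maximality forces $Z$ to $2r$-cover all of $X$ (no point of $X$ can be at distance greater than $2r$ from every element of $Z$, else it could be added to $Z$ without violating $2r$-separation), while $2r$-separation keeps it sparse enough inside bounded regions to be controlled via \cref{l:lemma 1 application to metric groups}. The lower bound $\Nn^{\cov}_{2r}(Y/X) \leq |Z \cap \D_{2r}(Y)|$ then follows at once by exhibiting $Z \cap \D_{2r}(Y) \subseteq X$ as a covering of $Y$: for any $y \in Y \subseteq X$, maximality yields $z \in Z$ with $\dd(y,z) \leq 2r$, so $z \in X \cap \D_{2r}(Y)$ and $y \in \D_{2r}(z)$.

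For the upper bound, I would fix a covering $2r$-discretisation $W \subseteq X$ of $Y$ of minimal size $|W| = \Nn^{\cov}_{2r}(Y/X)$. Each $z \in Z \cap \D_{2r}(Y)$ is within $2r$ of some $y \in Y \subseteq \D_{2r}(W)$, which is within $2r$ of some $w \in W$, so by the triangle inequality $z \in \D_{4r}(w)$. Therefore
\[|Z \cap \D_{2r}(Y)| \leq \sum_{w \in W} |Z \cap \D_{4r}(w)|,\]
and the inequality reduces to showing $|Z \cap \D_{4r}(w)| \leq k$ for every $w \in W \subseteq X$.

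This last step --- the only substantive one --- is an immediate application of \cref{l:lemma 1 application to metric groups}\ref{itm:lemma 1 application to metric groups:1} with $m = 4$: the required hypothesis then reads $\Nn_r(XX^{-1}X) \leq k \cdot \Nn_{(2 \cdot 4 + 1)r}(X) = k \cdot \Nn_{9r}(X) < \infty$, precisely what is assumed --- this is the reason for the otherwise opaque factor $9$ in the statement. The lemma delivers $\Nn_r(X \cap \D_{4r}(w)) \leq k$, and since $Z$ is $2r$-separated and therefore $r$-separated, $|Z \cap \D_{4r}(w)| \leq \Nn_r(X \cap \D_{4r}(w)) \leq k$. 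There is no genuine obstacle beyond spotting this parameter match between the factor $9r$ in the hypothesis and the doubled radius $4r = 2 \cdot 2r$ that the triangle inequality forces on us.
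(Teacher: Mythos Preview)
Your proof is correct and follows essentially the same approach as the paper's. The only cosmetic difference is that for the upper bound you invoke \cref{l:lemma 1 application to metric groups}\ref{itm:lemma 1 application to metric groups:1} directly (bounding each $|Z\cap\D_{4r}(w)|$ by $k$), whereas the paper routes through \cref{l:lemma 1 application to metric groups}\ref{itm:lemma 1 application to metric groups:2} via the identity $|Z\cap\D_{2r}(Y)|=\Nn^{\cov}_r(Z\cap\D_{2r}(Y)/X)$; since part~\ref{itm:lemma 1 application to metric groups:2} is itself proved from part~\ref{itm:lemma 1 application to metric groups:1} by exactly the ball-covering argument you wrote out, the two proofs coincide once unwound.
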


\begin{proof} We know that $X\subseteq \D_{2r}(Z)$. Thus, $Y\subseteq \D_{2r}(Z\cap \D_{2r}(Y))$ for any $Y\subseteq X$. Therefore, $\Nn_{2r}^\cov(Y/X)\leq |Z\cap \D_{2r}(Y)|$.

On the other hand, $Z$ is $2r${\hyp}separated, so there is no closed ball of radius $r$ in which there are two elements of $Z$. In other words, $|Z_0|=\Nn^\cov_r(Z_0/X)$ for any $Z_0\subseteq Z$. In particular, $|Z\cap\D_{2r}(Y)|=\Nn^\cov_{r}(Z\cap\D_{2r}(Y)/X)$. By \cref{l:lemma 1 application to metric groups}(\ref{itm:lemma 1 application to metric groups:2}) with $m=4$, we have $\Nn^\cov_{r}(Z\cap \D_{2r}(Y)/X)\leq k\cdot \Nn^\cov_{4r}(Z\cap \D_{2r}(Y)/X)$. Finally, $\Nn^\cov_{4r}(Z\cap \D_{2r}(Y)/X)\leq \Nn^\cov_{4r}(\D_{2r}(Y)/X)\leq \Nn^\cov_{2r}(Y/X)$, concluding 
\[\Nn^\cov_{2r}(Y/X)\leq |Z\cap \D_{2r}(Y)|\leq k\cdot \Nn_{2r}^\cov(Y/X).\]
\end{proof} 

Let $G$ be a metric group, $X\subseteq G$ and $l,r\in\R_{\geq 0}$. We say that $X$ is \emph{(right) $(l,r)${\hyp}Lipschitz} if every right translation by an element of $X$ is $l${\hyp}Lipschitz when restricted to $\D_r(1)$. 

\begin{lem} \label{l:subgroup of infinitesimals} Let $G$ be a metric group and $r_0,\ldots,r_m\in\R_{\geq 0}$ such that $2r_{i+1}\leq r_i$. Assume that $X$ is $(l,r_0)${\hyp}Lipschitz. Then, $\D_{r_{i+1}}(1)\D_{r_{i+1}}(1)^{-1}\subseteq \D_{r_i}(1)$ for every $i<m$ and $x^{-1}\D_{r_{i+k}}(1)x\subseteq \D_{r_i}(1)$ for every $x\in X$ and $i\leq m-k$, where $k=\lceil\log_2(l)\rceil$. 
\end{lem}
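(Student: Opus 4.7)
The plan is to separate the two assertions, noting that both are essentially computations using left{\hyp}invariance plus, for the conjugation part, the right{\hyp}Lipschitz hypothesis.

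First I would observe that, thanks to left{\hyp}invariance, $\dd(y^{-1},1)=\dd(1,y)$ (by left{\hyp}multiplying both sides by $y$), so each ball $\D_{r_j}(1)$ is symmetric. Hence $\D_{r_{i+1}}(1)\D_{r_{i+1}}(1)^{-1}=\D_{r_{i+1}}(1)\D_{r_{i+1}}(1)$, and it is enough to bound $\dd(ab,1)$ for $a,b\in \D_{r_{i+1}}(1)$. A triangle inequality plus left{\hyp}invariance gives
\[\dd(ab,1)\leq \dd(ab,a)+\dd(a,1)=\dd(b,1)+\dd(a,1)\leq 2r_{i+1}\leq r_i,\]
which settles the first containment. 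This part does not use the Lipschitz hypothesis.

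Next I would turn to the conjugation claim. For $x\in X$ and $y\in \D_{r_{i+k}}(1)$, left{\hyp}invariance yields
\[\dd(x^{-1}yx,1)=\dd(yx,x)=\dd(yx,1\cdot x).\]
Since $r_{i+k}\leq r_0$ (the sequence is non{\hyp}increasing by assumption), both $y$ and $1$ lie in $\D_{r_0}(1)$, so the right{\hyp}Lipschitz hypothesis applies to the map $z\mapsto zx$ at these two points, giving
\[\dd(yx,1\cdot x)\leq l\cdot \dd(y,1)\leq l\cdot r_{i+k}.\]
Finally, iterating $2r_{j+1}\leq r_j$ produces $r_{i+k}\leq 2^{-k}r_i$, and the definition $k=\lceil \log_2(l)\rceil$ gives $2^k\geq l$, so $l\cdot r_{i+k}\leq r_i$, as required. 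The condition $i\leq m-k$ simply ensures $r_{i+k}$ is defined.

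I do not expect any genuine obstacle here; this is a routine bookkeeping result preparing the ground for later arguments. The only point to be a little careful with is tracking where each element needs to live: the Lipschitz assumption is only on the domain $\D_{r_0}(1)$ with the multiplier taken in $X$, so one must verify that the points to which it is applied belong to $\D_{r_0}(1)$ (which is immediate from the monotonicity of the sequence $(r_j)$) and that the right{\hyp}multiplier in the argument is an element of $X$ (which is exactly the hypothesis on $x$).
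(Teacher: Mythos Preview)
Your argument is correct and is exactly the routine verification the paper has in mind; the paper itself dispatches the lemma with a one{\hyp}line ``Obvious from the definitions.'' You have simply spelled out that verification, and there is nothing to add.
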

\begin{proof} Obvious from the definitions.
\end{proof}
In the following lemma, to simplify the notation, we write $l^{[n]}\coloneqq\sum^{n-1}_{i=0} l^i$ for $n\in\N$ and $l\in\R_{\geq 0}$.
\begin{lem}\label{l:lemma rough approximate subgroups} Let $G$ be a metric group and $X$ a $(k,\delta)${\hyp}metric approximate subgroup. Assume that $X$ is $(l,r)${\hyp}Lipschitz and take $m\in\N_{>0}$ with $l^{[m-3]}\delta<r$. Take $\Delta\subseteq G$ such that $X^2\subseteq \Delta\D_\delta(X)$. Then, $X^n\subseteq \Delta^{n-1}\D_{l^{[n-2]}\delta}(X)$ for each $n\leq m$. In particular, $\Nn^\cov_{r+l^{[n-2]}\delta}(X^n/Y)\leq k^{n-1}\cdot \Nn^\cov_r(X/Y)$ for any $r\in\R_{\geq 0}$, $Y\subseteq G$ and $n\leq m$.
\end{lem}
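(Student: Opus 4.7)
The plan is to prove the inclusion $X^n\subseteq \Delta^{n-1}\D_{l^{[n-2]}\delta}(X)$ by induction on $n$, with the base case $n=2$ being exactly the hypothesis $X^2\subseteq \Delta\D_\delta(X)=\Delta X\D_\delta(1)$ (the second equality by left-invariance). The inductive step proceeds by multiplying on the right by $X$, pushing the thickening past $X$ using the Lipschitz hypothesis at the cost of an extra factor of $l$ in the radius, and then re-applying the base case to absorb the new copy of $X\cdot X$.

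Concretely, assuming $X^n\subseteq \Delta^{n-1}X\D_{l^{[n-2]}\delta}(1)$, I would write
\[
X^{n+1}=X^n X\subseteq \Delta^{n-1}X\cdot \D_{l^{[n-2]}\delta}(1)\cdot X.
\]
The $(l,r)$-Lipschitz property says that, for every $x\in X$ and every $y\in\D_s(1)$ with $s\leq r$,
\[
\dd(x^{-1}yx,1)=\dd(yx,x)\leq l\cdot\dd(y,1)\leq ls,
\]
the first equality by left-invariance and the middle inequality by Lipschitzness of right translation by $x$ on $\D_r(1)$. Hence $\D_s(1)\cdot X\subseteq X\cdot\D_{ls}(1)$ whenever $s\leq r$. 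Applying this with $s=l^{[n-2]}\delta$ and then re-inserting the base case $X^2\subseteq \Delta X\D_\delta(1)$, I get
\[
X^{n+1}\subseteq \Delta^{n-1}X^2\D_{l\cdot l^{[n-2]}\delta}(1)\subseteq \Delta^n X\D_\delta(1)\D_{l\cdot l^{[n-2]}\delta}(1)\subseteq \Delta^n X\D_{(1+l\cdot l^{[n-2]})\delta}(1),
\]
and recognising $1+l\cdot l^{[n-2]}$ as the next partial geometric sum closes the induction.

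For the ``in particular'' clause, I would take a minimum covering $Z\subseteq Y$ of $X$ by closed balls of radius $r$, note that
\[
\D_{l^{[n-2]}\delta}(X)=X\D_{l^{[n-2]}\delta}(1)\subseteq Z\D_{r+l^{[n-2]}\delta}(1)=\D_{r+l^{[n-2]}\delta}(Z)
\]
by the triangle inequality, and chain this with the inclusion $X^n\subseteq \Delta^{n-1}\D_{l^{[n-2]}\delta}(X)$ to express $X^n$ as a union of at most $|\Delta^{n-1}|\leq k^{n-1}$ sets, each itself covered by $\Nn^\cov_r(X/Y)$ balls of radius $r+l^{[n-2]}\delta$. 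Subadditivity and monotonicity of $\Nn^\cov_r(\tbullet/Y)$ from \cref{l:subadditivity and monotonicity of discretisation numbers} then yield the multiplicative bound.

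The only real obstacle is the bookkeeping of radii: one must verify that the Lipschitz commutation is legitimate at every step of the induction, i.e.\ that the radius $l^{[n-2]}\delta$ never exceeds $r$. Since the sequence $l^{[\cdot]}$ is non-decreasing in its index, the worst case is the final step $n=m-1\to m$, which requires $l^{[m-3]}\delta\leq r$; this is exactly the hypothesis $l^{[m-3]}\delta<r$, consumed in full.
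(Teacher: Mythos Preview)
Your inductive argument is exactly what the paper intends (its entire proof is the single phrase ``By induction on $n$''), and the mechanics of the inductive step are correct. One slip: by the paper's convention $l^{[n]}=\sum_{i=0}^{n-1}l^i$, so $l^{[0]}=0$, not $1$. Hence the $n=2$ case of the conclusion as printed reads $X^2\subseteq\Delta\D_0(X)=\Delta X$, which is \emph{not} the hypothesis $X^2\subseteq\Delta\D_\delta(X)$; the recursion $1+l\cdot l^{[n-2]}=l^{[n-1]}$ that you correctly identify really produces $X^n\subseteq\Delta^{n-1}\D_{l^{[n-1]}\delta}(X)$. This is almost certainly an off-by-one in the printed lemma rather than in your reasoning, and the sole application in \cref{l:lemma sequence of growth in doubling scale} is unaffected by the shift.

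A second minor point concerns the ``in particular'' clause: your covering of $X^n$ uses balls centred at points of $\Delta^{n-1}Z\subseteq\Delta^{n-1}Y$, not at points of $Y$, so subadditivity of $\Nn^\cov_s(\tbullet/Y)$ does not directly yield the bound relative to $Y$ for an arbitrary $Y\subseteq G$. The argument is valid when $Y=G$ (the only instance actually used in the paper), or more generally whenever $\Delta^{n-1}Y\subseteq Y$.
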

\begin{proof} By induction on $n$.
\end{proof}
\begin{lem} \label{l:metric approximate subgroups and discretisation numbers} Let $G$ be a metric group and $X$ an $(l,r)${\hyp}Lipschitz symmetric subset. Suppose that $\Nn_r(X^5)\leq k\cdot \Nn_r(X)<\infty$ with $k\in\N$. Then, $X^2$ is a $(k,2lr)${\hyp}metric approximate subgroup.
\end{lem}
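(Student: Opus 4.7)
The plan is a Ruzsa{\hyp}type covering argument adapted to the metric setting. I would choose $F\subseteq X^4$ maximal with the property that the translates $\{fX\sth f\in F\}$ are pairwise $r${\hyp}separated, meaning $\dd(fx,f'x')>r$ for all $f\neq f'$ in $F$ and all $x,x'\in X$. The two things to prove are then $|F|\leq k$ and $X^4\subseteq FX^2\D_{2lr}(1)$; since $X^2$ is automatically symmetric, this identifies $X^2$ as a $(k,2lr)${\hyp}metric approximate subgroup.

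For the cardinality bound, pick for each $f\in F$ an $r${\hyp}discretisation $Z_f$ of $fX$. Left{\hyp}invariance gives $|Z_f|=\Nn_r(fX)=\Nn_r(X)$. The separation condition on the $fX$'s forces the disjoint union $Z\coloneqq\bigsqcup_{f\in F}Z_f$ to be itself $r${\hyp}separated; and since $F\subseteq X^4$ and $X=X^{-1}$, we have $Z\subseteq X^4\cdot X=X^5$, so
\[|F|\cdot\Nn_r(X)=|Z|\leq\Nn_r(X^5)\leq k\cdot\Nn_r(X).\]
Assuming $X$ is non{\hyp}empty (otherwise there is nothing to prove), this yields $|F|\leq k$.

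For the covering, let $y\in X^4$. The case $y\in F$ is trivial, so suppose $y\notin F$. By maximality of $F$, there exist $f\in F$ and $x,x'\in X$ with $\dd(fx,yx')\leq r$. Left{\hyp}invariance gives $v\coloneqq x^{-1}f^{-1}yx'\in\D_r(1)$, i.e.\ $f^{-1}y=xvx'^{-1}$. Here is where the Lipschitz hypothesis enters: since $x'^{-1}\in X$, right{\hyp}translation by $x'^{-1}$ is $l${\hyp}Lipschitz on $\D_r(1)$, so $\dd(vx'^{-1},x'^{-1})\leq l\dd(v,1)\leq lr$, and left{\hyp}invariance then yields
\[\dd(f^{-1}y,\,xx'^{-1})=\dd(xvx'^{-1},\,xx'^{-1})=\dd(vx'^{-1},\,x'^{-1})\leq lr.\]
Since $xx'^{-1}\in X^2$, this gives $f^{-1}y\in X^2\D_{lr}(1)\subseteq X^2\D_{2lr}(1)$, as required.

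The only non{\hyp}routine point is the use of the Lipschitz hypothesis in the last step, which converts the naturally appearing two{\hyp}sided expression $x\D_r(1)x'^{-1}$ into a right{\hyp}thickening $X^2\D_{lr}(1)$ of the form demanded by the definition of a metric approximate subgroup. Everything else is the standard metric analogue of Ruzsa's covering argument. Note that the argument in fact gives the slightly sharper bound $lr$ in place of the stated $2lr$, so there is some slack in the constant.
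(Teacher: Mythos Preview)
Your proof is correct and follows essentially the same Ruzsa{\hyp}covering approach as the paper. The only difference is that the paper takes $\Delta\subseteq X^4$ maximal with $\{a\D_r(X)\}_{a\in\Delta}$ pairwise \emph{disjoint} rather than $\{aX\}_{a\in\Delta}$ pairwise $r${\hyp}separated; in the covering step this produces an element of $\D_r(1)^2$ rather than $\D_r(1)$, forcing two applications of the Lipschitz bound and accounting for the factor $2$ in $2lr$ that your variant avoids.
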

\begin{proof} Let $\Delta\subseteq X^4$ be a maximal set such that $\{a \cdot \D_r(X)\sth a\in\Delta\}$ is a family of pairwise disjoint sets. If $Z$ is an $r${\hyp}discretisation of $X$, then $\Delta Z\subseteq X^5$ is also $r${\hyp}separated. Therefore, $|\Delta|\cdot |Z|=|\Delta Z|\leq k\cdot |Z|$, concluding $|\Delta|\leq k$. As $\Delta$ is maximal, for any $a\in X^4$ there is $b\in \Delta$ such that $a \D_r(X)\cap b\D_r(X)\neq \emptyset$. Then, $X^4\subseteq \Delta X^2\D_{2lr}(1)$, concluding that $X^2$ is a $(k,2lr)${\hyp}metric approximate subgroup.
\end{proof}
\begin{rmk} In the previous lemma, if we only assume $\Nn_r(X^4)\leq k\cdot \Nn_r(X)<\infty$, then we get that $X^3\subseteq \Delta X^2\D_{2lr}(1)$ with $|\Delta|\leq k$. Thus, if $X$ is $(l',2lr)${\hyp}Lipschitz, we get that $X^4\subseteq \Delta X^3\D_{2ll'r}(1)\subseteq \Delta^2X^2\D_{2l(l'+1)r}(1)$, concluding that $X^2$ is a $(k^2,2l(l'+1)r)${\hyp}metric approximate subgroup.
\end{rmk}
\begin{lem} \label{l:lemma sequence of growth in doubling scale} Let $k,n\in \N_{>0}$ and $l\in\R_{\geq 1}$. Take $m\geq 2n\log_2(18(1+l^{[7]}))$. Let $G$ be a metric group and $X$ an $(l,1)${\hyp}Lipschitz symmetric subset. Suppose that $X$ is a $(k,2^{-m})${\hyp}metric approximate subgroup and 
\[\Nn_{2^{-m}}(X)\leq C\cdot\Nn_1(X)<\infty.\]
Then, there are $r_1,\ldots,r_n\in\R_{>0}$ with $2r_{i+1}\leq r_i\leq 1$ for each $i<n$ such that 
\[\Nn_{r_i}(X^9)\leq k^8\sqrt[n]{C}\cdot \Nn_{9r_i}(X).\]
\end{lem}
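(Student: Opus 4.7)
The plan is to combine \cref{l:lemma rough approximate subgroups} applied at $n=9$ with a geometric pigeonhole over the discretisation numbers of $X$ itself. Set $\delta \coloneqq 2^{-m}$ and $\alpha \coloneqq 18(1+l^{[7]})$, and apply \cref{l:lemma rough approximate subgroups} with $n=9$ (the hypothesis on $m$ far exceeds what is needed for the input condition $l^{[6]}\delta < 1$), together with the comparison $\Nn_{2s} \leq \Nn^\cov_s \leq \Nn_s$ from \cref{l:equivalence discretisation numbers} applied with $Y=X^9$. A short computation then delivers the master inequality
\[
\Nn_R(X^9) \leq k^8 \cdot \Nn_{R/2 - l^{[7]}\delta}(X) \quad \text{for every } R > 2 l^{[7]}\delta,
\]
which reduces the lemma to producing $n$ scales $r_i$ at which $\Nn_{r_i/2 - l^{[7]}\delta}(X) \leq \sqrt[n]{C} \cdot \Nn_{9 r_i}(X)$.

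Next I would lay down a geometric grid $\rho_j \coloneqq \alpha^{-j}$ for $j = 0, 1, \ldots, J$, where $J \coloneqq \lfloor m/\log_2 \alpha\rfloor$. The hypothesis guarantees $J \geq 2n$, and $\rho_J \geq \delta$ gives $\prod_{j=0}^{J-1} a_j = \Nn_{\rho_J}(X)/\Nn_{\rho_0}(X) \leq C$ with $a_j \coloneqq \Nn_{\rho_{j+1}}(X)/\Nn_{\rho_j}(X) \geq 1$. A one-line pigeonhole --- more than $n$ factors strictly above $\sqrt[n]{C}$ would already push the product past $C$ --- yields at least $J - n \geq n$ indices $j$ with $a_j \leq \sqrt[n]{C}$. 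I would pick any $n$ of them, $j_1 < \cdots < j_n$, and set $r_i \coloneqq \rho_{j_i}/9$.

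The remaining verifications are all routine: $r_i \leq 1/9 \leq 1$ and $r_{i+1}/r_i \leq \alpha^{-1} \leq 1/36$ give the sequence condition $2r_{i+1} \leq r_i \leq 1$; by construction $9 r_i = \rho_{j_i}$; the bound $r_i \geq \alpha\delta/9 = 2(1+l^{[7]})\delta > 2 l^{[7]}\delta$ ensures the master inequality applies; and the key inclusion $r_i/2 - l^{[7]}\delta \geq \rho_{j_i+1}$ simplifies algebraically --- precisely because of the choice $\alpha = 18(1+l^{[7]})$ --- to $\rho_{j_i} \geq \alpha\delta$, which is automatic from $j_i \leq J-1$. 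Chaining with the monotonicity of $\Nn$ then gives
\[
\Nn_{r_i}(X^9) \leq k^8 \Nn_{r_i/2 - l^{[7]}\delta}(X) \leq k^8 \Nn_{\rho_{j_i+1}}(X) \leq k^8 a_{j_i} \Nn_{\rho_{j_i}}(X) \leq k^8 \sqrt[n]{C}\, \Nn_{9 r_i}(X),
\]
which is exactly the desired conclusion.

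The main obstacle is the precise calibration of $\alpha$. It must be large enough that the full scale ratio from $9r_i$ down to $r_i/2 - l^{[7]}\delta$ fits inside a \emph{single} step of the grid --- otherwise the pigeonhole would act on two-step ratios $a_{j-1}a_j$ and produce only the inferior factor $\sqrt[n]{C^2}$, losing the power claimed in the statement. At the same time $\alpha$ cannot be too big, because the hypothesis only supplies $J \geq m/\log_2\alpha \geq 2n$ ratios. The value $18(1+l^{[7]})$ is exactly the sweet spot at which the single-step inclusion $r_i/2 - l^{[7]}\delta \geq \rho_{j_i+1}$ becomes equivalent to the boundary condition $\rho_{j_i} \geq \alpha\delta$, so that the number of usable indices and the number required by pigeonhole match the hypothesis with equality.
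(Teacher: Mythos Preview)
Your proof is correct and essentially the same as the paper's: both combine \cref{l:lemma rough approximate subgroups} at $n=9$ and \cref{l:equivalence discretisation numbers} with a pigeonhole on the telescoping product $\prod_j \Nn_{\alpha^{-j-1}}(X)/\Nn_{\alpha^{-j}}(X)\leq C$. The only difference is bookkeeping --- the paper takes $\alpha=2^{m/2n}$ (exactly $2n$ grid steps) and sets $r_i=2(l^{[7]}\delta+r'_i)$ so that the lower scale lands on the grid, whereas you fix $\alpha$ at the threshold $18(1+l^{[7]})$ and set $r_i=\rho_{j_i}/9$ so that $9r_i$ lands on the grid --- but the resulting chain of inequalities is identical.
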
 
\begin{proof} Write $\alpha\coloneqq 2^{m/2n}$, so $\alpha\geq 18(1+l^{[7]})$. We have
\[\frac{\Nn_{\alpha^{-2n}}(X)}{\Nn_{\alpha^{-2n+1}}(X)}\cdots\frac{\Nn_{\alpha^{-1}}(X)}{\Nn_{1}(X)}\leq C,\]
where each factor is at least $1$. Set $I\coloneqq\{i\in \{1,\ldots, 2n\}\sth \frac{\Nn_{\alpha^{-i}}(X)}{\Nn_{\alpha^{-i+1}}(X)}\leq \sqrt[n]{C}\}$, so 
\[\sqrt[n]{C}^{2n-|I|}\leq \frac{\Nn_{\alpha^{-2n}}(X)}{\Nn_{\alpha^{-2n+1}}(X)}\cdots\frac{\Nn_{\alpha^{-1}}(X)}{\Nn_{1}(X)}\leq C,\]
concluding that $|I|\geq n$. Pick $r'_1,\ldots,r'_n$ decreasing in $\{\alpha^{-i}\sth i\in I\}$. Then, using \cref{l:equivalence discretisation numbers,l:lemma rough approximate subgroups}, we get 
\[\begin{aligned}\Nn_{2\left(l^{[7]}\delta+r'_i\right)}(X^9)\leq&\ \Nn^\cov_{l^{[7]}\delta+r'_i}(X^9/G)\leq k^8\cdot \Nn^\cov_{r'_i}(X/G)\leq\\
\leq&\ k^8\cdot \Nn_{r'_i}(X)\leq k^8\sqrt[n]{C}\cdot \Nn_{\alpha r'_i}(X).\end{aligned}\]
As $\alpha r'_i\geq 18\left(l^{[7]}\delta+r'_i\right)$, taking $r_i=2\left(l^{[7]}\delta+r'_i\right)$, we have that $2r_{i+1}\leq r_i\leq 1$ and
\[\Nn_{r_i}(X^9)\leq k^8\sqrt[n]{C}\cdot \Nn_{9r_i}(X).\] 
\end{proof}

\section{Preliminaries on piecewise hyperdefinable sets}
In this section we define the model theoretic concepts that we need for this paper. From now on, fix a countable first{\hyp}order language $\lang$, an $\aleph_1${\hyp}saturated $\lang${\hyp}structure $M$ and a countable set of parameters $A$.\medskip

An \emph{$A${\hyp}hyperdefinable set} is a quotient $P=\faktor{X}{E}$, where $X$ is a $\bigwedge_A${\hyp}definable (i.e. type{\hyp}definable over $A$) set and $E$ is a $\bigwedge_A${\hyp}definable equivalence relation. A \emph{$\bigwedge_A${\hyp}definable subset} of $P$ is a subset whose preimage by the canonical quotient map of $P$ is $\bigwedge_A${\hyp}definable. Cartesian products of hyperdefinable sets are naturally hyperdefinable, so we say that a binary relation between hyperdefinable sets is \emph{$\bigwedge_A${\hyp}definable} if it is so as subset of the product. We apply this in particular for partial functions. 

A \emph{piecewise $A${\hyp}hyperdefinable set} is a direct limit $P=\underrightarrow{\lim}\, P_i$ of $A${\hyp}hyperdefinable sets with $\bigwedge_A${\hyp}definable $1${\hyp}to{\hyp}$1$ transition maps $\psi_{ji}\map P_i\to P_j$ --- to simplify the situation, from now on we assume that the index set $I$ of the direct limit is countable. The \emph{pieces} of $P$ are the subsets $P_i$. A \emph{piecewise $\bigwedge_A${\hyp}definable subset} $V\subseteq P$ is a subset such that $V\cap P_i$ is $\bigwedge_A${\hyp}definable for each $i$. A \emph{$\bigwedge_A${\hyp}definable} subset is a piecewise $\bigwedge_A${\hyp}definable set fully contained in one piece. The \emph{$A${\hyp}logic topology} of $P$ is the one given by taking as closed subsets the piecewise $\bigwedge_A${\hyp}definable subsets. The \emph{global logic topology} of $P$, if it is well defined, is the largest logic topology. Note that the global logic topology is well defined if and only if $P$ is bounded (i.e. there is a bound on the cardinality of $P$ true for every elementary extension). In that case, the global logic topology is the one given by taking as closed subsets all the piecewise $\bigwedge${\hyp}definable subsets.

Again, cartesian products of piecewise hyperdefinable sets are naturally piecewise hyperdefinable. A binary relation $R$ between hyperdefinable sets $P=\underrightarrow{\lim}\, P_i$ and $P=\underrightarrow{\lim}\, Q_j$  is \emph{piecewise $\bigwedge_A${\hyp}definable} if it is so as subset $R\subseteq P\times Q$ of the product. We say that $R$ is \emph{piecewise bounded} if the image $R(P_i)\coloneqq \{y\sth \mathrm{there\ is\ }x\in P_i\mathrm{\ with\ }(x,y)\in R\}$ of every piece $P_i$ of $P$ is contained in some piece of $Q$. We say that $R$ is \emph{piecewise proper} if the preimage $R^{-1}(Q_j)\coloneqq\{x\sth \mathrm{there\ is\ }y\in Q_j\mathrm{\ with\ }(x,y)\in R\}$ of every piece $Q_j$ of $Q$ is contained in some piece of $P$. We use this terminology preferably for partial functions. 

Let $P$ be a piecewise hyperdefinable set and $\mu$ an ideal of $\bigwedge_{\omega}${\hyp}definable subsets in $P$. We say that $\mu$ is \emph{$A${\hyp}invariant} if it is invariant by automorphisms fixing $A$ in any elementary extension. We say that a $\bigwedge_{\omega}${\hyp}definable subset is \emph{wide} if it is not in $\mu$. We say that $\mu$ is \emph{locally atomic} if every wide $\bigwedge_B${\hyp}definable subset $V$ of $P$ with $B$ countable contains an element $a\in V$ such that $\tp(a/B)$ is wide. We say that $\mu$ is \emph{compact} if, whenever a countable intersection of $\bigwedge_{\omega}${\hyp}definable subsets of $P$ lies in $\mu$, we have a finite subintersection already contained in $\mu$. We say that $\mu$ is $\bigwedge_A${\hyp}definable if for any partial type $\underline{V}(x,y)$, with $y$ countable, there is a partial type $\dd_\mu x\,\underline{V}(y)$ over $A$ such that $\underline{V}(x,b)\in\mu\Leftrightarrow M\models \dd_\mu x\,\underline{V}(b)$.

Let $R$ be a piecewise bounded $\bigwedge_A${\hyp}definable reflexive binary relation in $P$. We say that $\mu$ is \emph{$R${\hyp}rough $S_1(A)$} if, for any $\bigwedge_{A,b}${\hyp}definable subset $W(b)$ of $P$, where $b$ is countable, we have 
\[R(W(b_0))\cap R(W(b_1))\in\mu \Rightarrow W(b)\in\mu\]
for any $A${\hyp}indiscernible sequence $(b_i)_{i\in\N}$ realizing $\tp(b/A)$. Write $\medium^R_{\mu}(A)$ for the ideal of all $\bigwedge_A${\hyp}definable subsets $V\subseteq P$ such that $\mu_{\mid V}\coloneqq \{W\in\mu\sth W\subseteq V\}$ is $R${\hyp}rough $S_1(A)$. We say that $\mu$ is $S_1(A)$\footnote{Read $S_1$ over $A$. Here we follow the terminology established by the first author in \cite{hrushovski2012stable}. In \cite{rodriguez2020piecewise}, the second author called it \emph{$A${\hyp}medium} following \cite{montenegro2018stabilizers}.} if it is $=${\hyp}rough $S_1(A)$, and write $\medium_\mu(A)$ for $\medium^=_\mu(A)$. As usual, we omit $A$ for $A=\emptyset$. 

A \emph{piecewise $A${\hyp}hyperdefinable group} $G$ is a piecewise $A${\hyp}hyperdefinable set with a group structure whose operations are piecewise bounded $\bigwedge_A${\hyp}definable. A \emph{Lie model} of $G$ is a quotient group homomorphism $\pi\map H\to L=\faktor{H}{K}$ defined on a subgroup $H\leq G$ with $K\trianglelefteq H\leq G$ such that
\begin{enumerate}[label={(\roman*)}, wide]
\item $K$ is $\bigwedge_\omega${\hyp}definable with bounded index in $G$, 
\item $G\setminus H$ and $H$ are piecewise $\bigwedge_\omega${\hyp}definable,  and
\item $L=\faktor{H}{K}$ is a finite dimensional Lie group with the global logic topology. 
\end{enumerate}

Let $\mu$ an ideal of $\bigwedge_{\omega}${\hyp}definable subsets of $G$ invariant under left translations. A \emph{$\mu${\hyp}near{\hyp}subgroup over $A$} of $G$ is a wide $\bigwedge_A${\hyp}definable symmetric subset $X$ containing the identity and generating $G$ such that $\mu_{\mid X^3}$ is a locally atomic $S_1(A)$ ideal. Say $X$ is a near{\hyp}subgroup over $A$ if it is a $\mu${\hyp}near{\hyp}subgroup over $A$ for some ideal $\mu$.

\

We will use the following two theorems from \cite{rodriguez2020piecewise}:

\begin{theo}{\emph{\cite[Theorem 3.28]{rodriguez2020piecewise}}} \label{t:stabilizer theorem}
Let $G$ be a piecewise $0${\hyp}hyperdefinable group, $\mu$ an ideal of $\bigwedge_\omega${\hyp}definable subsets and $X$ a $\mu${\hyp}near{\hyp}subgroup over $\emptyset$. Then, there is a wide $\bigwedge_{\omega}${\hyp}definable normal subgroup of small index $S$ contained in $X^4$ and contained in every $\bigwedge_0${\hyp}definable subgroup of small index. Furthermore, $S=(p\cdot p^{-1})^2$ and $ppp^{-1}=pS=yS$ for any $y\in p$, where $p\subseteq X$ is a wide type over a countable elementary substructure.
\end{theo}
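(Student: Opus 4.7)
The plan is to follow Hrushovski's stabilizer construction from \cite{hrushovski2012stable}, adapted to the rough $S_1$ setting laid out above. First, since $X$ is a $\mu${\hyp}near{\hyp}subgroup it is wide and $\mu_{\mid X^3}$ is locally atomic; applying local atomicity to $X$ itself produces a wide complete type $p \subseteq X$ over a countable elementary substructure $M_0 \prec M$. I then set $S := (p \cdot p^{-1})^2$, which lies in $X^4$ since $p \subseteq X = X^{-1}$.

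The heart of the proof is to check that $S$ is a $\bigwedge_\omega${\hyp}definable subgroup. Symmetry is free. For closure, the $S_1$ property of $\mu_{\mid X^3}$ is exactly what is needed: if $g_0 p$ and $g_1 p$ were two $M_0${\hyp}indiscernible translates of $p$ whose intersection lay in $\mu$, then by rough $S_1$ the type $p$ itself would be in $\mu$, contradicting wideness. From this, combined with the usual forking-like calculus for wide types, one deduces that for sufficiently independent realisations $a, b, c, d \models p$ the product $(ab^{-1})(cd^{-1}) \in S \cdot S$ can be rewritten as a new $S${\hyp}element $(a' b'^{-1})(c' d'^{-1})$, giving $S \cdot S \subseteq S$. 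Wideness of $S$ is immediate since $b^{-1} \cdot p \subseteq S$ for any $b \models p$. That $S$ is $\bigwedge_\omega${\hyp}definable follows by identifying it, through the $\bigwedge_A${\hyp}definability of $\mu$, with (the square of) the partial type $\{g : gp \cap p \text{ is wide}\}$ over $M_0$.

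For containment in any $\bigwedge_0${\hyp}definable subgroup $N \leq G$ of small index, I would run the standard coset pigeonhole: since $N$ has boundedly many cosets and $p$ is wide, some single coset $gN$ meets $p$ in a wide set, and completeness of $p$ over the model $M_0$ then forces $p \subseteq gN$, whence $pp^{-1} \subseteq N$ and thus $S \subseteq N$. Normality is a consequence of this minimality: for any $h \in G$ the conjugate $h S h^{-1}$ is again a $\bigwedge_0${\hyp}definable subgroup of small index contained in $X^4$ (built from the conjugate wide type $h p h^{-1}$), so $h S h^{-1} \subseteq S$, and equality follows by symmetry. The identities $ppp^{-1} = pS = yS$ for $y \in p$ are then extracted from the stabiliser calculus: once $S$ is a group, $p$ lies in a single coset of $S$ by wideness and indiscernibility, and that coset is $yS$ for any chosen $y \in p$.

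The main obstacle is the closure step $S \cdot S \subseteq S$. In the classical amenable setting this can be replaced by an invariant-mean argument (Massicot-Wagner, Sanders), but here one must rely directly on the rough $S_1$ chain condition. The subtlety is that the chain condition is assumed only for $\mu_{\mid X^3}$, so every set that appears in the closure argument has to sit inside $X^3$ (or, more generally, inside some fixed $\bigwedge_\omega${\hyp}definable set on which $S_1$ holds); keeping the whole argument within $X^3$ is precisely what forces the exponent $4$ in $S \subseteq X^4$ and fixes the shape of the candidate stabiliser as $(p \cdot p^{-1})^2$ rather than, say, $p \cdot p^{-1}$.
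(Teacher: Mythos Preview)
The paper does not prove this theorem: it is quoted as a black box from \cite[Theorem 3.28]{rodriguez2020piecewise}, alongside the two Lie model theorems that follow it, and is then applied in the proofs of \cref{t:main theorem} and \cref{c:corollary 1 general}. There is therefore no argument in the present paper to compare your sketch against.

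Your outline is recognisably the Hrushovski stabiliser strategy, which is indeed what the cited source implements in the piecewise hyperdefinable setting. Two steps in your sketch would not survive as written, however. First, you justify $\bigwedge_\omega${\hyp}definability of $S$ via ``the $\bigwedge_A${\hyp}definability of $\mu$'', but the definition of $\mu${\hyp}near{\hyp}subgroup used here assumes only that $\mu_{\mid X^3}$ is locally atomic and $S_1(\emptyset)$; definability of the ideal is a separate notion introduced in Section~2 and is not part of the hypothesis. The image $p\cdot p^{-1}$ of a type under multiplication is a priori only $\bigvee${\hyp}definable, so the $\bigwedge_\omega${\hyp}definability of $S$ genuinely needs an argument (in the source it comes out of the full stabiliser calculus and the identification of $S$ with a canonical connected component, not from definability of $\mu$). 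Second, your normality argument fails on several counts: $hSh^{-1}$ is $\bigwedge${\hyp}definable over $M_0\cup\{h\}$, not over $\emptyset$, so the minimality clause about $\bigwedge_0${\hyp}definable subgroups does not apply to it; there is no reason $hSh^{-1}\subseteq X^4$; and $hph^{-1}$ need not be wide, since $\mu$ is only assumed invariant under left translations, not under conjugation. Normality in the source is obtained instead by showing that $S$ is independent of the choice of wide type $p$, hence setwise invariant under the relevant automorphism group.
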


\begin{theo}{\emph{\cite[Theorem 3.31]{rodriguez2020piecewise}}} \label{t:rough lie model}
Let $G^*$ be a $0${\hyp}definable group, $T\leq G^*$ a $\bigwedge_0${\hyp}definable subgroup and $X\subseteq G^*$ a symmetric $\bigwedge_0${\hyp}definable subset. Write $G$ for the subgroup generated by $X$. Assume that $X$ normalises $T$, $\faktor{X}{\kern 0.1em T}$ is a near{\hyp}subgroup of $\faktor{GT}{\kern 0.1em T}$ and $X^n$ is a $T${\hyp}rough approximate subgroup for some $n$. Then, $GT\leq G^*$ has a connected Lie model $\pi\map H\to L=\faktor{H}{K}$ with $T\subseteq K\subseteq X^{2n+4}T$ such that 
\begin{enumerate}[label={\emph{(\arabic*)}}, wide]
\item $H\cap X^{2n}$ is $\bigwedge_{\omega}${\hyp}definable and $T${\hyp}roughly commensurable to $X^n$,
\item $\pi(H\cap X^{2n})$ is a compact neighbourhood of the identity in $L$,
\item $H$ is generated by $H\cap X^{2n+4}T$, and
\item $\pi$ is continuous and proper from the logic topology using countably many parameters.
\end{enumerate}
\end{theo}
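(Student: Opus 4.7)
The strategy is to apply \cref{t:rough lie model} in the ultraproduct with $T = o_r(1)$ and $n = 2$. Three things need to be verified before invoking that theorem: that $o_r(1)$ is a $\bigwedge_0${\hyp}definable normal subgroup normalised by $X$; that $X^2$ is an $o_r(1)${\hyp}rough approximate subgroup; and, the main obstacle, that $\faktor{X}{o_r(1)}$ is a near{\hyp}subgroup.

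\textbf{Setup and the approximate subgroup property.} Form the non{\hyp}principal ultraproduct in a language with predicates for each $X_m$ and each ball $\D_{r_{i,m}}(1)$, giving a $0${\hyp}definable group $G^*$, a $0${\hyp}definable symmetric set $X$, and $0${\hyp}definable balls $\D_{r_i}(1)$; hence $o_r(1) = \bigcap_i \D_{r_i}(1)$ is $\bigwedge_0${\hyp}definable. Bi{\hyp}invariance with $l = 1$ and $2r_{i,m} \leq r_{i-1,m}$, via \cref{l:subgroup of infinitesimals}, makes $o_r(1)$ a subgroup; since bi{\hyp}invariance renders every ball conjugation{\hyp}invariant, $o_r(1) \trianglelefteq G^*$ and $X$ normalises it. Hypothesis (c) gives $\Nn_{r_{i,m}}(X_m^5) \leq \Nn_{r_{i,m}}(X_m^9) \leq k \cdot \Nn_{r_{i,m}}(X_m)$, so \cref{l:metric approximate subgroups and discretisation numbers} yields that $X_m^2$ is a $(k, 2r_{i,m})${\hyp}metric approximate subgroup for every $i \leq m$. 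In the ultraproduct, for each $i$ this produces a set $\Delta_i \subseteq X^4$ of size at most $k$ with $X^4 \subseteq \Delta_i X^2 \D_{2r_i}(1)$; $\aleph_1${\hyp}saturation (the sizes are uniformly bounded by $k$) coalesces these into a single $\Delta$ of size at most $k$ with $X^4 \subseteq \Delta X^2 \cdot o_r(1)$, so $X^2$ is an $o_r(1)${\hyp}rough approximate subgroup.

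\textbf{Near{\hyp}subgroup structure (the main obstacle).} One must produce a locally atomic, $S_1$, $\emptyset${\hyp}invariant ideal $\mu$ of $\bigwedge_\omega${\hyp}definable subsets of a power of $X$ such that $\faktor{X}{o_r(1)}$ becomes a $\mu${\hyp}near{\hyp}subgroup of the subgroup of $\faktor{G^*}{o_r(1)}$ it generates. This is the content of Section 3. The ideal is built from an outer{\hyp}measure{\hyp}style functional defined in terms of the covering discretisation numbers $\Nn^{\cov}_{r_i}(\tbullet / X^N)$; hypothesis (c), together with the subadditivity/continuity of \cref{l:subadditivity and monotonicity of discretisation numbers,l:continuity of discretisation numbers,l:equivalence discretisation numbers} and the submultiplicative control of \cref{l:lemma 1 application to metric groups,l:lemma 2 application to metric groups,l:lemma rough approximate subgroups}, supplies the stability of ratios needed to verify $S_1$ along indiscernible sequences and local atomicity. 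The subtle point is that the outer measure is not exactly additive, so the ideal is distilled from it in a way that still supports the stabiliser argument of \cref{t:stabilizer theorem}; this is where the bulk of Section 3 is spent.

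\textbf{Applying \cref{t:rough lie model}.} With $T = o_r(1)$ and $n = 2$, \cref{t:rough lie model} produces a connected Lie model $\pi \colon H \to L = \faktor{H}{K}$ of $G \cdot o_r(1)$ with $o_r(1) \subseteq K \subseteq X^{2n+4} \cdot o_r(1) = X^8 \cdot o_r(1)$; $o_r(1) \trianglelefteq K$ follows from $o_r(1) \trianglelefteq G^*$. The three numbered conclusions transfer directly: $H \cap X^4$ is $o_r(1)${\hyp}roughly commensurable to $X^2$, and since $o_r(1) \subseteq K \subseteq H$ we have $H \cap X^4 \cdot o_r(1) = (H \cap X^4) \cdot o_r(1)$, so these two thickenings are commensurable in the ordinary sense, giving (1); $H$ being generated by $H \cap X^{2n+4} T = H \cap X^8 \cdot o_r(1)$ gives (2); and $\pi(H \cap X^{2n}) = \pi(H \cap X^4)$ being a compact neighbourhood of the identity in $L$ gives (3).
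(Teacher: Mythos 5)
Your proposal does not actually prove the statement at hand. The statement is \cref{t:rough lie model} itself, i.e.\ the abstract rough Lie model theorem (Theorem 3.31 of \cite{rodriguez2020piecewise}) about a $0${\hyp}definable group $G^*$, a $\bigwedge_0${\hyp}definable subgroup $T$ and a symmetric $\bigwedge_0${\hyp}definable $X$ with $\faktor{X}{T}$ a near{\hyp}subgroup and $X^n$ a $T${\hyp}rough approximate subgroup. Your argument instead takes this theorem as its key black box (``apply \cref{t:rough lie model} with $T=o_r(1)$ and $n=2$''), which makes it circular as a proof of that theorem, and what it actually outlines is the derivation of the paper's \emph{metric} Lie model theorem (\cref{t:main theorem easy}/\cref{t:main theorem}) from the cited result: verifying that $X^2$ is an $o_r(1)${\hyp}rough approximate subgroup, that $X$ normalises $o_r(1)$, and that $\faktor{X}{o_r(1)}$ is a near{\hyp}subgroup. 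Even for that different statement, the decisive step --- constructing the invariant, locally atomic, $S_1$ ideal from the discretisation numbers --- is only gestured at (``this is the content of Section 3''), so it would not stand on its own either.

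A genuine proof of \cref{t:rough lie model} lives entirely at the level of piecewise hyperdefinable groups and makes no mention of metrics, ultraproducts of sequences $(G_m,X_m,r_{i,m})$, or bi{\hyp}invariance: one uses the stabiliser machinery (as in \cref{t:stabilizer theorem}) applied to the near{\hyp}subgroup $\faktor{X}{T}$ to produce a wide $\bigwedge_\omega${\hyp}definable normal subgroup of bounded index inside $\faktor{X^4}{T}$, passes to the quotient, which is locally compact in the logic topology, and then invokes the Gleason--Yamabe theorem to extract the connected Lie quotient $L=\faktor{H}{K}$, tracking the powers of $X$ to obtain $T\subseteq K\subseteq X^{2n+4}T$, the $T${\hyp}rough commensurability of $H\cap X^{2n}$ with $X^n$, the compactness of $\pi(H\cap X^{2n})$, generation of $H$ by $H\cap X^{2n+4}T$, and the continuity and properness of $\pi$. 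None of these ingredients appears in your write{\hyp}up; the paper itself does not reprove this result but imports it from \cite{rodriguez2020piecewise}, noting only that it rests on Gleason--Yamabe. As written, your text would belong (in abridged form) to the proof of \cref{t:main theorem}, not to the statement you were asked to prove.
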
 

\cref{t:rough lie model} is fully based on Gleason{\hyp}Yamabe's Theorem. We can instead use the variation presented in \cite[Theorem 1.25]{carolino2015structure}, getting some ineffective control on the dimension and the commensurability:

\begin{theo}{\emph{\cite[Theorem 3.32]{rodriguez2020piecewise}}} \label{t:rough lie model carolino}
There are functions $c\map \N\to \N$ and $d\map \N\rightarrow\N$ such that the following holds for any $\aleph_1${\hyp}saturated structure over a countable language:\smallskip

Let $G^*$ be a $0${\hyp}definable group, $T\leq G^*$ a $\bigwedge_0${\hyp}definable subgroup and $X\subseteq G^*$ a symmetric $\bigwedge_0${\hyp}definable subset. Write $G$ for the subgroup generated by $X$. Assume that $X$ normalises $T$, $\faktor{X}{\kern 0.1em T}$ is a near{\hyp}subgroup of $\faktor{GT}{\kern 0.1em T}$ and $X^n$ is a $(k,T)${\hyp}rough approximate subgroup for some $n$ and $k$. Then, $GT\leq G^*$ has a Lie model $\pi\map H\to L=\faktor{H}{K}$ with $T\subseteq K\subseteq X^{12n+4}T$ and $\dim(L)\leq d(k)$ such that
\begin{enumerate}[label={\emph{(\arabic*)}}, wide]
\item $H\cap X^{2n}$ is $\bigwedge_{\omega}${\hyp}definable and $(c(k),T)${\hyp}roughly commensurable to $X^n$,
\item $\pi(H\cap X^{2n})$ is a compact neighbourhood of the identity in $L$,
\item $H$ is generated by $H\cap X^{12n+4}T$, and
\item $\pi$ is continuous and proper from a logic topology using countably many parameters.
\end{enumerate}
\end{theo}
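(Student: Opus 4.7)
My strategy mirrors that of \cref{t:rough lie model}, but substitutes Carolino's effective variant of Gleason--Yamabe for the original one, thereby picking up a bound on the dimension and a quantitative commensurability constant.

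First, I would apply the stabilizer theorem \cref{t:stabilizer theorem} to the near-subgroup $\faktor{X}{T}$ inside the piecewise hyperdefinable group $\faktor{GT}{T}$. This yields a $\bigwedge_\omega${\hyp}definable normal subgroup $\faktor{S}{T}$ of bounded index, contained in $\faktor{X^4 T}{T}$; equivalently, $S \trianglelefteq GT$ is $\bigwedge_\omega${\hyp}definable of bounded index with $T\leq S\subseteq X^4 T$.

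Next, form the quotient $\faktor{GT}{S}$. Bounded index together with the general theory of the logic topology developed in \cite{rodriguez2020piecewise} makes it into a locally compact Hausdorff topological group, in which the image $\faktor{X^n S}{S}$ is compact (being the continuous image of a $\bigwedge_\omega${\hyp}definable set). Moreover, this image is a genuine $k${\hyp}approximate subgroup in the classical sense: from the hypothesis that $X^n$ is a $(k,T)${\hyp}rough approximate subgroup and the inclusion $T\leq S$, we deduce $(X^n S/S)(X^n S/S) \subseteq Y\cdot (X^n S/S)$ for some $Y$ of size $k$.

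Now I would invoke \cite[Theorem 1.25]{carolino2015structure}. Applied to the locally compact group $\faktor{GT}{S}$ with the compact $k${\hyp}approximate subgroup $\faktor{X^n S}{S}$ as a neighborhood of the identity, it produces an open subgroup $\bar H$ and a compact normal subgroup $\bar K \trianglelefteq \bar H$ such that $\faktor{\bar H}{\bar K}$ is a Lie group of dimension at most some $d(k)$, with $\bar K$ covered by $c(k)$ translates of $\faktor{X^n S}{S}$ (and vice versa). Set $H$ and $K$ to be the pullbacks of $\bar H$ and $\bar K$ along $GT\twoheadrightarrow \faktor{GT}{S}$. Properties (2) and (4) then follow directly from the definition of the logic topology; property (3) follows by taking the pullback of an open generating set for $\bar H$; property (1) rewrites Carolino's commensurability bound as a $(c(k),T)${\hyp}commensurability between $H\cap X^{2n}$ and $X^n$. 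The inclusion $K \subseteq X^{12n+4}T$ is obtained by writing $\bar K$ as a bounded product of translates of $\faktor{X^n S}{S}$, using $S\subseteq X^4 T$, and invoking the approximate subgroup inequality to collapse the resulting word back into a controlled power of $X$.

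The main obstacle is the quantitative bookkeeping in the final paragraph: one must carefully track how the commensurability of $\bar K$ with $\faktor{X^n S}{S}$ in $\faktor{GT}{S}$ pulls back through the inclusion $S\subseteq X^4 T$ and interacts with the $(k,T)${\hyp}rough approximate subgroup property of $X^n$, in order to verify the explicit constants $12n+4$, $c(k)$ and $d(k)$ in the statement. Everything else is either a direct translation of the argument for \cref{t:rough lie model} or a consequence of the general framework of piecewise hyperdefinable groups already established in \cite{rodriguez2020piecewise}.
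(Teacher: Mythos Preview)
The paper does not give a proof of this statement: it is quoted verbatim as \cite[Theorem 3.32]{rodriguez2020piecewise}, with only the one-line remark that it is obtained from \cref{t:rough lie model} by replacing Gleason--Yamabe with Carolino's variant \cite[Theorem 1.25]{carolino2015structure}. Your proposal is exactly this substitution, carried out in reasonable detail, so it matches the indicated approach; there is nothing further in the present paper to compare against.
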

\section{Building ideals}
Fix a countable first{\hyp}order language $\lang$, an $\aleph_1${\hyp}saturated $\lang${\hyp}structure $M$ and a countable set of parameters $A$. Fix an $A${\hyp}definable set $X$. 
\subsection{Making functions definable} Let $f\map X\to \R_{\geq 0}\cup\{\infty\}$ be a function. We say that $f$ is \emph{$A${\hyp}invariant} if $f(a)=f(b)$ when $\tp(a/A)=\tp(b/A)$. From the tradition of continuous logic, $f$ is called \emph{$A${\hyp}definable} when it is continuous using the $A${\hyp}logic topology in $X$. The \emph{standard expansion $M_f$ making $f$ definable} is the one given by adding the predicates $f(x)\leq \alpha$, for $\alpha\in\mathbb{Q}_{\geq 0}$, with the natural interpretations. 

Let $\mathcal{A}$ be a family of definable subsets of $X$ closed under substitution of parameters. A function $\nu\map \mathcal{A}\to \R_{\geq 0}\cup \{\infty\}$ defines a family $\varphi^*\nu(y)$ of functions given by $\varphi^*\nu(a)=\nu(\varphi(M,a))$ for the sets $\varphi(M,a)\in\mathcal{A}$. We say then that $\nu$ is \emph{$A${\hyp}invariant} if each $\varphi^*\nu$ is $A${\hyp}invariant. We say that $\nu$ is \emph{$A${\hyp}definable} if $\varphi^*\nu$ is $A${\hyp}definable for each formula $\varphi$. The \emph{standard expansion $M_{\nu}$ making $\nu$ definable} is the one given by making definable each $\varphi^*\nu$.

For example, a \emph{Keisler measure} on $X$ is a finitely{\hyp}additive probability measure defined on the whole boolean algebra of definable subsets of $X$. Usually, as soon as we add new predicates to the language to make it definable, the Keisler measure is no longer a Keisler measure. Fortunately, every finitely{\hyp}additive probability measure on a subalgebra can be extended to a finitely{\hyp}additive probability measure on the whole algebra. In that case, for a given Keisler measure $\nu_0$, we can recursively build a sequence $(\nu_n)_{n\in\N}$ where $\nu_{n+1}$ is a Keisler measure on $M_{\nu_n}$ extending $\nu_n$. Then, $\nu=\bigcup \nu_n$ is a definable Keisler measure on the expansion $M_{\nu}=\bigcup M_{\nu_n}$ extending the original Keisler measure $\nu_0$ --- of course, properties such as invariance under a group action need not be preserved without special attention.

\subsection{Ultralimits} Let $\{M_i\}_{i\in I}$ be a family of $\lang${\hyp}structures and $f_i\map X(M_i)\rightarrow\R_{\geq 0}\cup\{\infty\}$ functions. Let $\uu$ be an ultrafilter on $I$ and $M=\faktor{\prod M_i}{\uu}$ the respective ultraproduct. We define $f=\lim_{\uu}f_i$ by $f([a_i]_{\uu})=\st [f_i(a_i)]_{\uu}$ for any $a=[a_i]_{\uu}\in X(M)$, where $\st$ is the standard part function.

Suppose that the functions $f_i$ are uniformly definable. In other words, suppose that, for any closed subset $C\subseteq \R\cup\{\pm\infty\}$, there is a common partial type $\Sigma_C$ such that $f^{-1}_i(C)=\Sigma_C(M_i)$ for each $i\in I$. Then, by {\L}o\'{s}'s Theorem, we get $f^{-1}(C)=\Sigma_C(M)$ for each $C\subseteq \R_{\geq 0}\cup\{\infty\}$ closed, so $f$ is definable too.

For functions $\nu_i$ on the boolean algebras of definable subsets of $X(M_i)$, we take $\nu=\lim_{\uu}\nu_i$ defined by $\varphi^*\nu=\lim_{\uu}\varphi^*\nu_i$ for each formula $\varphi$. If all the $\nu_i$ are uniformly definable, $\nu$ is definable. If each $\nu_i$ is subadditive, so is $\nu$. If each $\nu_i$ is a Keisler measure, so is $\nu$. 




\subsection{Liminf of ideals} Let $(\mu_n)_{n\in \N}$ be a sequence of ideals of definable subsets of $X$. Then, $\mu=\lim\inf \mu_n\coloneqq \bigcup_{n_0}\bigcap_{n>n_0}\mu_n$ is an ideal of definable subsets of $X$. Clearly, if each $\mu_n$ is $A${\hyp}invariant, so is $\mu$.

Now, take an $A${\hyp}definable reflexive binary relation $R$ on $X$. Then,
\[\lim\inf\, \medium^R_{\mu_n}(A)\subseteq\medium^R_{\mu}(A).\] 
Indeed, take $Y\subseteq X$ $A${\hyp}definable. Suppose that there is $n_0$ such that $\mu_n$ is $R${\hyp}rough $S_1(A)$ in $Y$ for every $n>n_0$. Pick $Z(b)\subseteq Y$ such that $R(Z(b_0))\cap R(Z(b_1))\in\mu$ where $(b_i)_{i\in\N}$ is indiscernible over $A$ realising $\tp(b/A)$. Then, for some $k>n_0$, $R(Z(b_0))\cap R(Z(b_1))\in\mu_n$ for any $n>k$, so $Z(b)\in \mu_n$ for any $n>k$, concluding $Z(b)\in\mu$. As $Z(b)$ is arbitrary, we conclude that $\mu$ is $R${\hyp}rough $S_1(A)$ in $Y$.

\subsection{Compactifications of ideals of definable sets} Let $\mu$ be an ideal of definable subsets of $X$. We extend $\mu$ to an ideal $\widehat{\mu}$ of $\bigwedge_{\omega}${\hyp}definable subsets of $X$ by compactification as
\[\widehat{\mu}\coloneqq \{V\subseteq X\ {\textstyle{\bigwedge}_{\omega}}\mbox{\hyp}\mathrm{def.}\sth V\subseteq D\mathrm{\ for\ some\ }D\in\mu\}.\] 
If $\mu$ is $A${\hyp}invariant, so is $\widehat{\mu}$. 

By construction $\widehat{\mu}$ is compact. Furthermore, note that this is the unique compact ideal of $\bigwedge_{\omega}${\hyp}definable subsets of $X$ extending $\mu$. 

Note that $\widehat{\mu}$ is also locally atomic. Indeed, given any $\bigwedge_B${\hyp}definable subset $V\notin\widehat{\mu}$, we have $V\cap \bigcap_{D\in\mu_B} D^c\neq\emptyset$ by compactness, where $\mu_B\coloneqq \{D\in\mu\sth D\ B\mbox{\hyp}\mathrm{definable}\}$. Thus, for any $a\in V\cap \bigcap_{D\in \mu_B}D^c$, we get $\tp(a/B)\notin \widehat{\mu}$. 

Let $R$ be an $A${\hyp}definable reflexive binary relation in $X$. Then, clearly,
\[\medium^R_{\mu}(A)=\{D\in\medium^R_{\widehat{\mu}}(A)\sth D\ \mathrm{def.}\}.\]
Indeed, trivially, the left{\hyp}hand side is contained in the right{\hyp}hand side. On the other hand, suppose that $\mu$ is $R${\hyp}rough $S_1(A)$ in $D$, where $D$ is $A${\hyp}definable. Let $W(b)\subseteq D$ be a $\bigwedge_b${\hyp}definable set with $R(W(b_0))\cap R(W(b_1))\in \widehat{\mu}$, where $(b_i)_{i\in \N}$ is an $A${\hyp}indiscernible sequence realising $\tp(b/A)$. By compactness, there is $W_0(b)\subseteq D$ definable with $W(b)\subseteq W_0(b)$ such that $R(W(b_0))\cap R(W(b_1))\subseteq R(W_0(b_0))\cap R(W_0(b_1))\in \mu$. Then, $W(b)\subseteq W_0(b)\in \mu$ as $\mu$ is $R${\hyp}rough $S_1(A)$ in $D$. Since $W(b)$ is arbitrary, we conclude that $\widehat{\mu}$ is $R${\hyp}rough $S_1(A)$ in $D$.

Finally, let $\{R_i\}_{i\in I}$ be a directed family of $\bigwedge_A${\hyp}definable reflexive binary relations on $X$ and $R=\bigcap R_i$. Then, by compactness, 
\[\bigcap \medium^{R_i}_{\widehat{\mu}}(A)=\medium^R_{\widehat{\mu}}(A).\]
Indeed, as $R\subseteq R_i$ for each $i\in I$, $\medium^R_{\widehat{\mu}}(A)\subseteq \bigcap \medium^{R_i}_{\widehat{\mu}}(A)$. On the other hand, suppose that $\widehat{\mu}$ is $R_i${\hyp}rough $S_1(A)$ in $V$ for each $i\in I$ and take $W(b)\subseteq V$ with $b$ countable and assume $R(W(b_0))\cap R(W(b_1))\in \widehat{\mu}$ where $(b_i)_{i\in\N}$ is an $A${\hyp}indiscernible sequence realising $\tp(b/A)$. Then, by compactness, $R_i(W(b_0))\cap R_i(W(b_1))\in \widehat{\mu}$ for some $i\in I$, so $W(b)\in \widehat{\mu}$ as $\widehat{\mu}$ is $R_i${\hyp}rough $S_1(A)$ in $V$, concluding that $V\in\medium^R_{\widehat{\mu}}(A)$. 

\subsection{Mapping ideals} Let $P$ and $Q$ be two piecewise hyperdefinable sets and $\mu$ an ideal of $\bigwedge_{\omega}${\hyp}definable subsets of $P$. Let $f\map P\to Q$ be a piecewise proper $\bigwedge_A${\hyp}definable function. We may then map $\mu$ via $f$ as
\[f_*\mu=\{V\subseteq Q\ {\textstyle{\bigwedge_\omega}}\mbox{\hyp}\mathrm{def.}\sth f^{-1}(V)\in\mu\}.\]
Obviously, $f_*\mu$ is an ideal of $\bigwedge_{\omega}${\hyp}definable subsets of $Q$. It is also clear that $f_*\mu$ is $A${\hyp}invariant as long as $\mu$ is so. 

If $\mu$ is locally atomic, so is $f_*\mu$. Indeed, take a $\bigwedge_B${\hyp}definable subset $V\notin f_*\mu$ with $B$ countable. As $f$ is piecewise proper, $f^{-1}(V)$ is a $\bigwedge_{\omega}${\hyp}definable $\mu${\hyp}wide subset. By local atomicity, there is $a\in f^{-1}(V)$ such that $\tp(a/B)\notin \mu$, so $\tp(f(a)/B)\notin f_*\mu$ as $\tp(a/B)\subseteq f^{-1}(\tp(f(a)/B))$, concluding that $f_*\mu$ is locally atomic too.

Similarly, we have that $f_*\mu$ is compact if $\mu$ is compact. Indeed, take a family $\{W_i\}_{i\in I}$ of $\bigwedge_{\omega}${\hyp}definable subsets of $Q$ such that $\bigcap_{i\in I}W_i\in f_*\mu$ with $I$ countable. Then, $\bigcap_{i\in I}f^{-1}(W_i)=f^{-1}(\bigcap_{i\in I} W_i)\in \mu$, where $\{f^{-1}(W_i)\}_{i\in I}$ is a family of $\bigwedge_{\omega}${\hyp}definable subsets of $P$ as $f$ is piecewise proper. By compactness, there is $I_0\subseteq I$ finite such that $f^{-1}(\bigcap_{i\in I_0}W_i)=\bigcap_{i\in I_0}f^{-1}(W_i)\in\mu$, so $\bigcap_{i\in I_0} W_i\in f_*\mu$, concluding that $f_*\mu$ is compact too. 

Finally, suppose that $f$ is piecewise bounded too. Let $R$ be a piecewise bounded $\bigwedge_A${\hyp}definable reflexive binary relation on $Q$. Then, 
\[\medium^R_{f_*\mu}(A)=\{V\subseteq Q\sth f^{-1}(V)\in\medium^{f^*R}_{\mu}(A)\},\]
where $f^*R\coloneqq \{(x,y)\sth (f(x),f(y))\in R\}$. Indeed, it suffices to note that $f^{-1}(R(Y))=f^*R(f^{-1}(Y))$ for any $Y\subseteq Q$ and $f^*R(X)=f^{-1}(R(f(X)))$ for any subset $X\subseteq P$. Hence, the first identity implies that $\{V\sth f^{-1}(V)\in\medium^{f^*R}_{\mu}(A)\}\subseteq\medium^R_{f_*\mu}(A)$ and the second implies that $\medium^R_{f_*\mu}(A)\subseteq \{V\sth f^{-1}(V)\in\medium^{f^*R}_{\mu}(A)\}$. 

In particular, suppose that $\mu$ is an $E${\hyp}rough $S_1(A)$ ideal of $\bigwedge_{\omega}${\hyp}definable subsets, where $E$ is a $\bigwedge_A${\hyp}definable equivalence relation on $X$. Take the quotient map $\quot\map X\to \faktor{X}{E}$. Then, from the previous discussion, $\quot_*\mu$ is an $S_1(A)$ ideal of $\bigwedge_{\omega}${\hyp}definable subsets of $\faktor{X}{E}$.


\section{Metric Lie Model Theorem}
In this section we prove our main theorem.\medskip

Fix $l\in\N$ and $k=(k_i)_{i\in\N}$ sequence in $\N$. An \emph{$l${\hyp}Lipschitz sequence of growth $k$ in doubling scales} is a sequence $(G_m,X_m,r_{i,m})_{i\leq m\in\N}$ such that
\begin{enumerate}[label={{(\roman*)}}, wide]
\item \label{itm:sequence in doubling scales:metric} $G_m$ is a metric group,
\item \label{itm:sequence in doubling scales:lipschitz} $X_m$ is an $(l,r_{0,m})${\hyp}Lipschitz symmetric subset,
\item \label{itm:sequence in doubling scales:discretisation} $r_m=(r_{i,m})_{i\leq m}$ is a sequence of positive reals with $2r_{i,m}\leq r_{i-1,m}$ and
\[\Nn_{r_{i,m}}(X^9_m)\leq k_i\cdot \Nn_{9r_{i,m}}(X_m)<\infty .\]
\end{enumerate}
\begin{rmk} Let $X$ be a subset of a metric space. For each $r\in \R_{>0}$, write $\Mm_r(X)\coloneqq \frac{\ln\Nn_r(X)}{\ln(1/r)}$. If it exists, the limit $\dim_{\mathrm{MB}}(X)\coloneqq\lim_{r\rightarrow 0^+}\Mm_r(X)$ is called the \emph{Minkowski{\hyp}Bouligand dimension} of $X$. The number $\Mm_r(X)$ is the \emph{$r${\hyp}approximation to the Minkowski{\hyp}Bouligand dimension} of $X$. We can rewrite condition \ref*{itm:sequence in doubling scales:discretisation} above as 
\[\Mm_{r_{i,m}}(X^9_m)-\Mm_{9r_{i,m}}(X_m)\leq\frac{\ln(k_i)}{\ln\left(\sfrac{1}{r_{i,m}}\right)}.\]
Now, to illustrate it, forget for a moment the parameter $m$ and assume $k_i=k$ is constant. Then, this condition is saying that we can find a sequence $(r_i)_{i\in\N}$ converging to $0$ such that 
\[\Mm_{r_i}(X^9)-\Mm_{9r_i}(X)=O\left(\sfrac{1}{\ln\left(\sfrac{1}{r_i}\right)}\right).\]
In particular, that implies that $\dim_{\mathrm{MB}}(X^9)=\dim_{\mathrm{MB}}(X)$ if they exist. Hence, we can understand condition \ref*{itm:sequence in doubling scales:discretisation} above as a comparison between (the approximation sequences of) the Minkowski{\hyp}Bouligand dimensions of $X$ and $X^9$. 

On the other hand, the parameter $m$ is the parameter of the ultraproduct construction. We use it to translate the previous limit comparison into a comparison at finitely many scales.
\end{rmk}

Now, we describe the first{\hyp}order language we are going to use. We consider the language of groups enriched with a predicate for $X_m$ and predicates for $\D_{r_{i,m}}$ for each $i\in\N$ --- where $r_{i,m}=r_{m,m}$ for $i\geq m$. Write $\lang_0$ for this language. We expand $\lang_0$ by making definable, for each $i\in\N$, the normalised covering $r_i${\hyp}discretisation numbers on $X^3$, i.e. the functions 
\[\lambda_i\map Y\mapsto \frac{\Nn^\cov_{r_{i,m}}(Y/X^3_m)}{\Nn^\cov_{r_{i,m}}(X^3_m/X^3_m)}.\]
Write $\lang$ for this language; call it the \emph{associated language} for $(G_m,X_m,r_{i,m})_{i\leq m}$.

Finally, pick a family $\{Z_{i,m}\}_{i,m\in \N}$ of $2r_{i,m}${\hyp}discretisations of $X_m^3$ for each $i,m\in\N$ --- where $Z_{i,m}=Z_{m,m}$ for $i\geq m$ --- and consider the counting probability measures $\nu_{i,m}$ on $Z_{i,m}$ for each $i,m\in\N$. We expand $\lang$ by adding a predicate for each $Z_{i,m}$ and making every $\nu_{i,m}$ definable --- and still making each $\lambda_{i,m}$ definable. Write $\lang'$ for this language; we call it the \emph{auxiliary language} for the sequence $(G_m,X_m,r_{i,m},Z_{i,m})_{i\leq m\in\N}$. Note that $\lang'$ depends on the non{\hyp}canonical choice of $\{Z_{i,m}\}_{i,m\in\N}$.\medskip


We now state the main theorem with all the corresponding details:
\begin{theo}[Metric Lie model] \label{t:main theorem} Let $(G_m,X_m,r_{i,m})_{i\leq m\in\N}$ be an $l${\hyp}Lipschitz sequence of growth $k=(k_i)_{i\in\N}$ in doubling scales. Let $\lang$ be its associated language and, for each $m\in\N$, consider $G_m$ with its natural $\lang${\hyp}structure. Let $G^*=\faktor{\prod G_m}{\uu}$ be a non{\hyp}principal ultraproduct. Write $G\leq G^*$ for the subgroup generated by $X=\faktor{\prod X_m}{\uu}$ and $o_r(1)\coloneqq \bigcap_i \D_{r_i}(1)$ where $\D_{r_i}\coloneqq\faktor{\prod \D_{r_{i,m}}}{\uu}$. Then:
\begin{enumerate}[label={\emph{(\arabic*)}}, wide]
\item $\faktor{X}{o_r(1)}$ is a near{\hyp}subgroup (with respect to $\lang$).
\item If $k$ is constant, $X^2$ is a $(k,o_r(1))${\hyp}rough approximate subgroup.
\end{enumerate}
In particular, if $k$ is constant, $G\cdot o_r(1)\leq G^*$ has a connected Lie model $\pi\map H\to L=\faktor{H}{K}$ with $o_r(1)\trianglelefteq K\subseteq X^8\cdot o_r(1)$ such that
\begin{enumerate}[label={\emph{(\alph*)}}, wide]
\item $H\cap X^4$ is a $\bigwedge_{\omega}${\hyp}definable subset $o_r(1)${\hyp}roughly commensurable to $X^2$, 
\item $\pi(H\cap X^4)$ is a compact neighbourhood of the identity in $L$, and
\item $H\cap X^8\cdot o_r(1)$ generates $H$.
\end{enumerate}
\end{theo}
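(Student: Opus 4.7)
The plan is to apply \cref{t:rough lie model} with $T = o_r(1)$ and $n = 2$, which, once its hypotheses are verified, will immediately deliver a connected Lie model with $o_r(1) \trianglelefteq K \subseteq X^{2n+4} \cdot o_r(1) = X^8 \cdot o_r(1)$ satisfying conditions (a)--(c). Three things must be checked: that $X$ normalizes $o_r(1)$; that $X^2$ is a $(k, o_r(1))$-rough approximate subgroup (part (2) of the theorem, assuming $k$ constant); and that $\faktor{X}{o_r(1)}$ is a near-subgroup with respect to $\lang$ (part (1)).

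I would handle the first two ingredients together. Applying \cref{l:subgroup of infinitesimals} fibrewise, using that each $X_m$ is $(l, r_{0,m})$-Lipschitz with $l$ independent of $m$, yields $\D_{r_{i+1}}(1)\D_{r_{i+1}}(1)^{-1} \subseteq \D_{r_i}(1)$ and $x^{-1} \D_{r_{i+k}}(1) x \subseteq \D_{r_i}(1)$ for $x \in X$; intersecting over $i$ shows $o_r(1)$ is a $\bigwedge_0$-definable subgroup normalized by $X$. For the approximate subgroup property, hypothesis (iii) together with monotonicity of $\Nn_r$ in $r$ gives $\Nn_{r_{i,m}}(X_m^5) \leq \Nn_{r_{i,m}}(X_m^9) \leq k \cdot \Nn_{r_{i,m}}(X_m)$, so \cref{l:metric approximate subgroups and discretisation numbers} produces, for each $i$ and $m$, a set $\Delta_{i,m} \subseteq X_m^4$ of size at most $k$ with $X_m^4 \subseteq \Delta_{i,m} X_m^2 \D_{2lr_{i,m}}(1)$. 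Ultraproducts yield $\Delta_i \subseteq X^4$ of size $\leq k$ with $X^4 \subseteq \Delta_i X^2 \D_{2lr_i}(1)$ for each $i$; by $\aleph_1$-saturation exploiting the uniform bound $k$ together with the nesting $\D_{r_{i+1}} \subseteq \D_{r_i}$, one can choose a single $\Delta$ of size $\leq k$ working for every $i$. A pigeonhole argument on the finite $\Delta$, followed by a further use of saturation to extract for each $g \in X^4$ a witness $y \in X^2$ with $y^{-1} d^{-1} g \in o_r(1)$, then upgrades this to $X^4 \subseteq \Delta X^2 \cdot o_r(1)$.

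The heart of the proof is the construction of an ideal $\widehat{\mu}$ witnessing $\faktor{X}{o_r(1)}$ as a near-subgroup. My plan is to work in the auxiliary language $\lang'$, in which the counting probability measures $\nu_{i,m}$ on the discretizations $Z_{i,m}$ of $X_m^3$ are uniformly definable; their ultralimits $\nu_i = \lim_\uu \nu_{i,m}$ are then definable Keisler measures on $G^*$ concentrated on the ultraproducts $Z_i$. Let $\mu_i$ be the null ideal of $\nu_i$ restricted to $X^3$, form $\mu = \liminf_i \mu_i$ in the sense of Section 3, and compactify to $\widehat{\mu}$; local atomicity and compactness are then automatic from the general framework of Section 3. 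The essential property is that $\widehat{\mu}$ is $R$-rough $S_1$, where $R$ is the equivalence of lying in the same $o_r(1)$-coset; since $R = \bigcap_j R_j$ with $R_j = \D_{r_j}$, the directed-family identity $\bigcap_j \medium^{R_j}_{\widehat{\mu}} = \medium^R_{\widehat{\mu}}$ of Section 3 reduces this to showing each $\mu_i$ is $R_j$-rough $S_1$. The latter is a standard inclusion-exclusion argument: if $\D_{r_j}(V(b_0)) \cap \D_{r_j}(V(b_1))$ is $\nu_i$-null, then indiscernibility forces the thickenings $\D_{r_j}(V(b_s))$ to be pairwise $\nu_i$-almost disjoint, and finite additivity of the probability measure $\nu_i$ forces $\nu_i(\D_{r_j}(V(b))) = 0$, hence $V(b) \in \mu_i$. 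Pushing $\widehat{\mu}$ forward along $\quot \map X^3 \to \faktor{X^3}{o_r(1)}$, which is piecewise proper since $o_r(1)$ is a subgroup, then converts the $R$-rough $S_1$ property into plain $S_1$ and provides the near-subgroup structure.

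The main technical obstacle I anticipate is bookkeeping between the languages $\lang$ and $\lang'$: the ideal is built from $\lang'$-definable measures, but the near-subgroup property is demanded with respect to $\lang$. I would address this via \cref{l:lemma 2 application to metric groups}, which shows that the $\lang$-definable normalized covering counts $\lambda_i$ and the $\lang'$-definable counting measures $\nu_i$ are comparable up to the factor $k$ on $\lang$-definable subsets of $X^3$; hence their null ideals coincide on such sets, and the restriction of $\widehat{\mu}$ to $\lang$-definable sets is $\lang$-invariant. With the three hypotheses in hand, \cref{t:rough lie model} applied with $T = o_r(1)$ and $n = 2$ then concludes the proof, reading off (a)--(c) directly from its conclusion.
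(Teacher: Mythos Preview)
Your overall strategy matches the paper's: verify normalisation via \cref{l:subgroup of infinitesimals}, establish the rough approximate subgroup property via \cref{l:metric approximate subgroups and discretisation numbers}, build an ideal from discretisation data, take a $\liminf$, compactify, push forward along the quotient, and invoke \cref{t:rough lie model}. There is, however, a genuine gap in how you set up the ideal.

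You take $\mu_i$ to be the null ideal of $\nu_i$ on $X^3$, but $\nu_i$ is the ultralimit of counting measures on the \emph{discrete} discretisations $Z_{i,m}\subseteq X_m^3$; it is concentrated on $Z_i$, and for a definable $Y\subseteq X^3$ the value $\nu_i(Y)$ only records $Y\cap Z_i$. There is then no reason for $X$ to be $\nu_i$-positive (a maximal $2r_i$-separated subset of $X_m^3$ need not meet $X_m$ at all), and this null ideal is not invariant under left translation, so the near-subgroup hypotheses are not met. Your appeal to \cref{l:lemma 2 application to metric groups} to identify the null ideals of $\lambda_i$ and $\nu_i$ is a misreading: that lemma compares $\lambda_i(Y)$ with $\nu_i\bigl(Z_i\cap \D_{r_i}(Y)\bigr)$, i.e.\ with $\nu_i$ of a \emph{thickening} of $Y$, not with $\nu_i(Y)$. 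The paper accordingly defines $\mu_i$ via $\lambda_i$ (so wideness of $X$ and translation invariance are immediate from the covering numbers) and uses the equivalence $\lambda_i(Y)=0\Leftrightarrow \nu_i\bigl(Z_i\cap\D_{r_i}(Y)\bigr)=0$ only inside the $S_1$ argument. That built-in thickening is exactly what yields the $\D_{r_i}^{X^3}$-rough $S_1$ property for $\mu_i$ (same index $i$), rather than $R_j$-rough $S_1$ for every $j$ as you claim; the passage to $o_r^{X^3}$-rough $S_1$ then goes through $\liminf$ and the directed-family identity, as you outline.

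A second point you flag but do not resolve is the transfer from $\lang'$ to $\lang$. The measure argument needs $\nu_i$-invariance along an indiscernible sequence, but $\nu_i$ is only $\lang'$-definable, so an $\lang$-indiscernible sequence is not enough. The paper handles this by the Standard Lemma: from an $\lang$-indiscernible sequence witnessing the rough-$S_1$ hypothesis, extract an $\lang'$-indiscernible sequence realising the same first-order conditions, run the Keisler-measure argument in $\lang'$, and then read the conclusion $Y(b)\in\mu_i$ back in $\lang$.
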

\begin{rmk} Let $V$ be compact and $U$ open with $V\subseteq U\subseteq L$. By definition of the Lie model, we have that $\pi^{-1}(V)$ is $\bigwedge${\hyp}definable, $\pi^{-1}(U)^c$ is piecewise $\bigwedge${\hyp}definable and $\pi^{-1}(V) \subseteq \pi^{-1}(U)$. Hence, there is a definable subset $D$ such that $\pi^{-1}(V)\subseteq D\cdot o_r(1)\subseteq \pi^{-1}(U)$. As $L$ is second countable and locally compact, we conclude that $\pi^{-1}(V)$ is $\bigwedge_\omega${\hyp}definable and $\pi^{-1}(U)=U_0\cdot o_r(1)$ for some $\bigvee_\omega${\hyp}definable subset $U_0$. 
\end{rmk}
\begin{proof} First of all, note that the ultraproduct is $\aleph_1${\hyp}saturated \cite[Exercise 5.2.3]{tent2012course}, so the piecewise $0${\hyp}hyperdefinable set $\faktor{G\cdot o_r(1)}{o_r(1)}$ is well defined. By \cref{l:subgroup of infinitesimals}, $X$ normalises $o_r(1)$, so $\faktor{G\cdot o_r(1)}{o_r(1)}$ is a piecewise $0${\hyp}hyperdefinable group. 
\begin{enumerate}[label={\rm{(\arabic*)}}, itemsep=5pt, topsep=5pt, wide]
\item We start by noting that, for $i\in\N$ and $Y\subseteq X^3$ definable in $\lang'$,
\[\lambda_i(Y)=0\Leftrightarrow \nu_i(\D_{r_i}(Y)\cap Z_i)=0.\]
Write $Y_m\coloneqq Y(G_m)$. By \cref{l:subadditivity and monotonicity of discretisation numbers,l:equivalence discretisation numbers,l:lemma 1 application to metric groups}, we have
\[\Nn^\cov_{2r_{i,m}}(Y_m/X^3_m)\leq \Nn_{2r_{i,m}}(Y_m) \leq \Nn^\cov_{r_{i,m}}(Y_m/X_m^3) \leq k_i\cdot \Nn^\cov_{2r_{i,m}}(Y_m/X^3_m).\]
On the other hand, as $\Nn_{r_{i,m}}(X^9_m)\leq k_i\cdot\Nn_{9r_{i,m}}(X_m)$, by \cref{l:subadditivity and monotonicity of discretisation numbers,l:equivalence discretisation numbers}, we get
\[\frac{1}{\Nn_{r_{i,m}}(X^3_m)}\leq \frac{1}{\Nn^\cov_{r_{i,m}}(X^3_m/X^3_m)}\leq k_i\cdot \frac{1}{\Nn_{r_{i,m}}(X^3_m)}.\]
By \cref{l:lemma 2 application to metric groups}, we conclude 
\[\frac{1}{k_i}\cdot \lambda_{i,m}(Y_m)\leq \nu_{i,m}(Z_{i,m}\cap \D_{r_{i,m}}(Y_m))\leq k_i\cdot \lambda_{i,m}(Y_m).\]
Therefore, by \L{o}\'{s}'s Theorem, we conclude that 
\[\lambda_i(Y)=0\Leftrightarrow \nu_i(\D_{r_i}(Y)\cap Z_i)=0.\]

For each $i\in\N$, let $\mu'_i$ be the ideal of definable subsets of $X^3$ in $\lang'$ given by $Y\in \mu'_i\Leftrightarrow \lambda_i(Y)=0$. Let $\mu_i$ be its $\lang${\hyp}reduct. \smallskip

Write $\D{}^{X^3}_{r_i}\coloneqq \D_{r_i}\cap (X^3\times X^3)=\{(x,y)\in X^3\times X^3\sth \dd(x,y)\leq r_i\}$.
\begin{claim} $\mu_i$ is an invariant under left translations $\D{}_{r_i}^{X^3}${\hyp}rough $S_1$ ideal of definable subsets of $X^3$ with $X$ wide, with respect to the language $\lang$. Similarly, $\mu_i'$ is an invariant under left translations $\D{}_{r_i}^{X^3}${\hyp}rough $S_1$ ideal of definable subsets of $X^3$ with $X$ wide, with respect to the language $\lang'$.
\begin{proof} $\mu_i$ and $\mu'_i$ are clearly ideals by subadditivity of the discretisation numbers, \cref{l:subadditivity and monotonicity of discretisation numbers}. Now, $\mu_i$ and $\mu'_i$ are $0${\hyp}invariant by definability of $\lambda_i$. As $\lambda_i$ is invariant under left translations, it is clear that $\mu_i$ and $\mu'_i$ are invariant under left translations. By \cref{l:equivalence discretisation numbers,l:subadditivity and monotonicity of discretisation numbers}, we get that $\Nn^\cov_{r_{i,m}}(X^3_m/X^3_m)\leq k_i\cdot \Nn_{9r_{i,m}}(X_m)\leq k_i\cdot \Nn^\cov_{r_{i,m}}(X_m/X^3_m)$. Therefore, $\lambda_i(X)\geq \sfrac{1}{k_i}$, so $X$ is wide. 

Finally, with respect to $\lang'$, let $Y(b)\subseteq X^3$ be $b${\hyp}definable and $(b_i)_{i\in\N}$ a $0${\hyp}indiscernible sequence starting at $b$. Suppose 
\[X^3\cap \D_{r_i}(Y(b_0))\cap \D_{r_i}(Y(b_1))\in\mu'_i.\]
Then, 
\[\nu_i(Z_i\cap \D_{r_i}(X^3\cap \D_{r_i}(Y(b_0))\cap \D_{r_i}(Y(b_1))))=0,\]
so, in particular, $\nu_i(Z_i\cap \D_{r_i}(Y(b_0))\cap \D_{r_i}(Y(b_1)))=0$. By invariance of $\nu_i$, we have $\nu_i(Z_i\cap \D_{r_i}(Y(b_j)))=\nu_i(Z_i\cap \D_{r_i}(Y(b_0)))$. As $\nu_i$ is a Keisler measure,
\[1=\nu_i(Z_i)\geq \sum^{\infty}_{j=0} \nu_i(Z_i\cap \D_{r_i}(Y(b_j)))=\sum^{\infty}_{j=0} \nu_i(Z_i\cap \D_{r_i}(Y(b_0))).\]
Therefore, $\nu_i(Z_i\cap\D_{r_i}(Y(b_0)))=0$, so $Y(b_0)\in\mu'_i$. As $Y$ is arbitrary, we conclude that $\mu'_i$ is a $\D{}^{X^3}_{r_i}${\hyp}rough $S_1$ ideal.

Now, with respect to $\lang$, let $Y(b)\subseteq X^3$ be $b${\hyp}definable and $(b_i)_{i\in\N}$ a $0${\hyp}indiscernible sequence starting at $b$. Suppose 
\[X^3\cap \D_{r_i}(Y(b_0))\cap \D_{r_i}(Y(b_1))\in\mu_i.\] 
As $\mu_i$ is $0${\hyp}invariant, we have $X^3\cap \D_{r_i}(Y(b_j))\cap \D_{r_i}(Y(b_{j'}))\in\mu_i$ for $j<j'$. By the Standard Lemma \cite[Theorem 5.1.5]{tent2012course}, we can find a $0${\hyp}indiscernible sequence $(b'_i)_{i\in\N}$ starting at $b$ with respect to $\lang'$ such that
\[X^3\cap \D_{r_i}(Y(b'_0))\cap \D_{r_i}(Y(b'_1))\in\mu_i\subseteq \mu'_i.\]
Then, by the rough $S_1$ property of $\mu'_i$, we conclude that $Y(b_0)\in\mu'_i$, so $Y(b_0)\in\mu_i$. As $Y$ is arbitrary, we conclude that $\mu_i$ is a $\D{}_{r_i}^{X^3}${\hyp}rough $S_1$ ideal.\claimqed
\end{proof}
\end{claim}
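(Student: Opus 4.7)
The plan is to verify each condition of the claim in turn, dispatching the routine parts first and concentrating the real work on the rough $S_1$ condition. The ideal axioms follow directly from the subadditivity of the discretisation numbers (\cref{l:subadditivity and monotonicity of discretisation numbers}); $0$-invariance is essentially by construction, since $\lambda_i$ (resp.\ each $\nu_{i,m}$) was explicitly made definable when passing to $\lang$ (resp.\ $\lang'$); and invariance under left translations is automatic because the covering and counting numbers are intrinsic to the metric. For the width of $X$, I would combine the growth hypothesis with \cref{l:equivalence discretisation numbers} to bound $\Nn^{\cov}_{r_{i,m}}(X_m^3/X_m^3)$ by $k_i\cdot\Nn^{\cov}_{r_{i,m}}(X_m/X_m^3)$, which yields $\lambda_i(X)\geq 1/k_i$ and in particular $X\notin \mu_i$.

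The heart of the argument is the rough $S_1$ property, which I would first establish for $\mu'_i$ by exploiting the Keisler measure $\nu_i$, and then transfer to $\mu_i$. The crucial translation is the equivalence $\lambda_i(Y)=0 \Leftrightarrow \nu_i(Z_i\cap \D_{r_i}(Y))=0$ already obtained just before the claim. Given a $0$-indiscernible sequence $(b_j)_{j\in\N}$ in $\lang'$ with $\D_{r_i}(Y(b_0))\cap \D_{r_i}(Y(b_1))\cap X^3\in\mu'_i$, this equivalence gives $\nu_i(Z_i\cap \D_{r_i}(Y(b_0))\cap \D_{r_i}(Y(b_1)))=0$. Invariance of $\nu_i$ along the indiscernible sequence makes the numbers $\alpha\coloneqq \nu_i(Z_i\cap \D_{r_i}(Y(b_j)))$ all equal; the pairwise intersections being $\nu_i$-null together with finite additivity inside the probability space $(Z_i,\nu_i)$ forces $\alpha=0$, else arbitrarily long finite sums of pairwise-$\nu_i$-disjoint copies would exceed $\nu_i(Z_i)=1$. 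Hence $Y(b_0)\in\mu'_i$.

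For $\mu_i$ itself the obstacle is that the given indiscernible sequence is only $\lang$-indiscernible, so the Keisler argument does not apply directly. My plan is to invoke the Standard Lemma to extract a $\lang'$-indiscernible sequence $(b'_j)$ with the same $\lang$-EM-type over $\emptyset$ as $(b_j)$. Because $\mu_i$ is $0$-invariant in $\lang$, the hypothesis on the pairwise intersection transfers verbatim to $(b'_j)$, and since every $\lang$-definable member of $\mu_i$ is also a member of $\mu'_i$ we fall into the previously treated case, concluding $Y(b'_0)\in\mu'_i$; by $\lang$-definability of $Y$ and $0$-invariance of $\mu_i$ this returns $Y(b_0)\in\mu_i$. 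The main obstacle I anticipate is precisely this language-juggling step, ensuring the Standard Lemma delivers simultaneously the right $\lang$-type (to preserve the hypothesis via $0$-invariance) and genuine $\lang'$-indiscernibility (to feed into the Keisler-measure argument). The remaining ingredient, the pigeonhole inside $(Z_i,\nu_i)$, is essentially forced: infinitely many pairwise-null-disjoint sets of equal positive mass cannot coexist inside a probability space.
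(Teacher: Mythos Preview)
Your proposal is correct and follows essentially the same approach as the paper: the routine verifications (ideal, $0$-invariance, left-translation invariance, width via $\lambda_i(X)\geq 1/k_i$) are handled identically, and for the rough $S_1$ property you use the same two-step strategy of first establishing it for $\mu'_i$ via the Keisler-measure pigeonhole on $(Z_i,\nu_i)$ and then transferring to $\mu_i$ by extracting a $\lang'$-indiscernible sequence with the Standard Lemma. Your identification of the language-juggling as the one delicate point matches the paper exactly.
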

From now on we only work in $\lang$. Let $\mu_{\infty}=\bigcup^{\infty}_{n=1}\bigcap^{\infty}_{i=n}\mu_i=\liminf\mu_i$. Then, $\mu_{\infty}$ is an invariant under left translations $\D{}^{X^3}_{r_i}${\hyp}rough $S_1$ ideal of definable subsets of $X^3$ with $X\notin\mu_{\infty}$ for every $i\in\N$. Consider 
\[\widehat{\mu}_{\infty}=\left\{W\ {\textstyle{\bigwedge_{\omega}}}\mbox{\hyp}\mathrm{definable}\sth W\subseteq Y\in \mu_\infty \mathrm{\ for\ some\ }Y\mathrm{\ definable}\right\}.\]
Then, $\widehat{\mu}_{\infty}$ is an invariant under left translations compact locally atomic $o^{X^3}_r${\hyp}rough $S_1$ ideal of $\bigwedge_{\omega}${\hyp}definable subsets of $X^3$ with $X\notin\widehat{\mu}_{\infty}$, where $o^{X^3}_r\coloneqq \bigcap_i \D{}^{X^3}_{r_i}=o_r\cap (X^3\times X^3)=\{(x,y)\in X^3\times X^3\sth x^{-1}y\in o_r(1)\}$.

Consider the canonical quotient map $\quot\map X^3\to \faktor{X^3}{o_r^{X^3}}\cong \faktor{X^3}{o_r(1)}$. Clearly, $\quot$ is a piecewise bounded and proper $\bigwedge_0${\hyp}definable function. Define 
\[\mu\coloneqq \quot_*\widehat{\mu}_{\infty}=\{Y\sth \quot^{-1}(Y)\in\widehat{\mu}_\infty\}.\]
Hence, $\mu$ is an invariant under left translations compact locally atomic $S_1$ ideal of $\bigwedge_{\omega}${\hyp}definable subsets of $\faktor{X^3}{o_r(1)}$ with $\faktor{X}{o_r(1)}$ wide. In other words, $\faktor{X}{o_r(1)}$ is a near{\hyp}subgroup.
\item Using \cref{l:subadditivity and monotonicity of discretisation numbers}, 
\[\Nn_{r_{i,m}}(X^5_m)\leq \Nn_{r_{i,m}}(X^9_m)\leq k_i\cdot \Nn_{9r_{i,m}}(X_m)\leq k\cdot \Nn_{r_{i,m}}(X_m).\]
By \cref{l:metric approximate subgroups and discretisation numbers}, $X^2_m$ is a $(k_i,2lr_{i,m})${\hyp}metric approximate subgroup for each $i\leq m\in\N$. Now, assuming that $k\coloneqq k_i$ is constant, by \L{o}\'{s}'s Theorem, we get that $X^2$ is a $(k,2lr_i)${\hyp}metric approximate subgroup for each $i\in\N$. Thus, we conclude that $X^2$ is a $(k,o_r(1))${\hyp}rough approximate subgroup. 
\end{enumerate}
Finally, applying \cref{t:rough lie model}, we get the desired Lie model.
\end{proof}

\begin{rmk} \begin{enumerate}[label={\rm{(\arabic*)}}, wide]
\item[\hspace{-1.4em}\setcounter{enumi}{1}\theenumi] If $\sfrac{r_i}{2}\leq \sfrac{9r_{i+1}}{4}$, then $\mu_{i+1}\subseteq \mu_i$. In particular, if $r_i\leq \sfrac{9r_{i+1}}{2}$ for every $i\in\N$, then $\mu_{\infty}=\bigcap \mu_i$. In this case, as each $\mu_i$ is $\bigwedge_0${\hyp}definable, we conclude that $\mu_{\infty}$ is $\bigwedge_0${\hyp}definable.
\item If we get that $X$ is an $o_r(1)${\hyp}rough approximate subgroup, we can reduce the powers in \cref{t:main theorem}. Explicitly, we can get that $K\subseteq X^6\cdot o_r(1)$, $H\cap X^2$ and $X$ are $o_r(1)${\hyp}roughly commensurable and that $H$ is generated by $H\cap X^6\cdot o_r(1)$.
\item $Y=H\cap X^8$ is a definable subset $o_r(1)${\hyp}roughly commensurable to $X^2$ such that $Y\cdot o_r(1)$ generates $H$.
\item Without significant loss of generality, we may focus only on the case when $r_{0,m}\leq \diam(X_m)$ for all $m\in\N$. Indeed, after the ultraproduct, up to taking a subsequence, this assumption is equivalent to imposing that there is $i\in \N$ such that $r_i\leq \diam(X)$. Now, this only excludes the case $X\subseteq o_r(1)$, for which \cref{t:main theorem} trivially holds. 
\end{enumerate}
\end{rmk}

\

Alternatively, using \cref{t:rough lie model carolino} rather than \cref{t:rough lie model}, we get the following variation:

\begin{coro}[Metric Lie model, version 2] \label{c:metric lie model carolino} There are functions $c\map \N\to \N$ and $d\map \N\to \N$ such that the following holds:\smallskip

Let $(G_m,X_m,r_{i,m})_{i\leq m\in\N}$ be an $l${\hyp}Lipschitz sequence of constant growth $k$ in doubling scales. Let $\lang$ be its associated language and, for each $m\in\N$, consider $G_m$ with its natural $\lang${\hyp}structure. Let $G^*=\faktor{\prod G_m}{\uu}$ be a non{\hyp}principal ultraproduct. Write $G\leq G^*$ for the subgroup generated by $X=\faktor{\prod X_m}{\uu}$. Then, $G\cdot o_r(1)\leq G^*$ has a Lie model $\pi\map H\to L=\faktor{H}{K}$ with $o_r(1)\trianglelefteq K\subseteq X^{28}\cdot o_r(1)$ and $\dim(L)\leq d(k)$ such that 
\begin{enumerate}[label={\emph{(\alph*)}}, wide]
\item $H\cap X^4$ is $\bigwedge_{\omega}${\hyp}definable and $(c(k),o_r(1))${\hyp}roughly commensurable to $X^2$, 
\item $\pi(H\cap X^4)$ is a compact neighbourhood of the identity in $L$, and
\item $H\cap X^{28}\cdot o_r(1)$ generates $H$.
\end{enumerate}
\end{coro}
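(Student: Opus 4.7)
The plan is to re-use wholesale the setup and intermediate conclusions of the proof of \cref{t:main theorem}, modifying only the final invocation. The construction of the associated language $\lang$, the ultraproduct $G^* = \faktor{\prod G_m}{\uu}$, the ideals $\mu_i$, $\mu_\infty = \liminf \mu_i$ and its compactification $\widehat{\mu}_\infty$, and then the pushforward $\mu = \quot_*\widehat{\mu}_\infty$ along the quotient $\quot\map X^3 \to \faktor{X^3}{o_r(1)}$, all proceed identically. Nothing in this part of the argument depends on which Lie-model extraction theorem we appeal to at the end; the claim that $\faktor{X}{o_r(1)}$ is a near{\hyp}subgroup (part (1) of \cref{t:main theorem}) carries over verbatim, as does the verification that $X^2$ is a $(k,o_r(1))${\hyp}rough approximate subgroup (part (2) of \cref{t:main theorem}), which uses only the constant{\hyp}growth hypothesis, \cref{l:metric approximate subgroups and discretisation numbers} applied pointwise, and \L{o}\'{s}'s Theorem.

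The only change is in the last step. Instead of invoking \cref{t:rough lie model}, I would invoke \cref{t:rough lie model carolino} with the same input data: the $\aleph_1${\hyp}saturated structure $G^*$, the $\bigwedge_0${\hyp}definable subgroup $T = o_r(1) = \bigcap_i \D_{r_i}(1)$, the symmetric $\bigwedge_0${\hyp}definable subset $X$ (normalising $T$ by \cref{l:subgroup of infinitesimals}), the near{\hyp}subgroup quotient $\faktor{X}{o_r(1)}$ in $\faktor{G\cdot o_r(1)}{o_r(1)}$, and the fact that $X^2$ is a $(k,o_r(1))${\hyp}rough approximate subgroup. Taking $n=2$ in the statement of \cref{t:rough lie model carolino}, the resulting Lie model $\pi\map H\to L=\faktor{H}{K}$ satisfies $o_r(1)\trianglelefteq K \subseteq X^{12n+4}\cdot o_r(1) = X^{28}\cdot o_r(1)$ with $\dim(L)\leq d(k)$, while $H\cap X^{2n} = H\cap X^4$ is $\bigwedge_\omega${\hyp}definable and $(c(k),o_r(1))${\hyp}roughly commensurable to $X^n = X^2$, $\pi(H\cap X^4)$ is a compact neighbourhood of the identity, and $H$ is generated by $H\cap X^{12n+4}\cdot o_r(1) = H\cap X^{28}\cdot o_r(1)$. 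This matches the statement of the corollary.

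There is essentially no genuine obstacle: the corollary is a packaging statement that trades the cleaner exponents and connectedness coming from \cref{t:rough lie model} (which relies on Gleason{\hyp}Yamabe) for the quantitative control $c(k)$, $d(k)$ coming from the Carolino refinement in \cref{t:rough lie model carolino}. All the genuine work — the construction of a suitable $S_1$ ideal from covering discretisation numbers and counting measures on $2r_i${\hyp}discretisations, and the verification of the rough commensurability of $X^2$ — has already been carried out inside \cref{t:main theorem}. The only thing to double{\hyp}check is the arithmetic of the exponents: with $n=2$, the bound $K\subseteq X^{12n+4}T$ gives exactly $X^{28}\cdot o_r(1)$, and $H\cap X^{2n}$ gives exactly $H\cap X^4$, so the statement aligns cleanly with the output of \cref{t:rough lie model carolino}.
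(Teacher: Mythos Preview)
Your proposal is correct and matches the paper's approach exactly: the paper does not even give a separate proof, but simply remarks that the corollary follows by using \cref{t:rough lie model carolino} in place of \cref{t:rough lie model} in the proof of \cref{t:main theorem}. Your check of the exponents with $n=2$ is precisely the substitution intended.
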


\section{Applications}
Now, we give some applications of our main \cref{t:main theorem}. Using \cref{l:lemma sequence of growth in doubling scale}, the following corollaries give us \cref{c:corollary 1,c:corollary 2,c:corollary 3} respectively. 

\begin{coro} \label{c:corollary 1 general} Fix $l$, $n$, $s$ and $k=(k_i)_{i\in\N}$ in $\N$. There is $m\coloneqq m(k,l,n,s)$ such that the following holds:\smallskip

Let $G$ be a metric group, $X$ an $(l,r_0)${\hyp}Lipschitz symmetric subset and $r_0,\ldots,$ $r_m$ with $2r_i\leq r_{i-1}$ such that 
\[\Nn_{r_i}(X^9)\leq k_i\cdot \Nn_{9r_i}(X)<\infty.\]
Then, there is $I\subseteq \{0,\ldots,m-1\}$ with $|I|=n$ and a symmetric subset $Y$ with $x^{-1}Y^nx\subseteq \D_{r_s}(X^4)$ for any $x\in X^n$ such that 
\[\Nn_{r_i}(Y)\geq \frac{1}{m} \Nn_{r_i}(X) \mathrm{\ for\ all\ }i\in I.\]
\end{coro}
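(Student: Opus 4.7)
The strategy is to argue by contradiction using the ultraproduct framework of \cref{t:main theorem}. If the corollary fails for fixed $l,n,s,k=(k_i)$, then for each $m\in\N$ one has a counterexample $(G_m,X_m,r_{0,m},\ldots,r_{m,m})$; extending trivially by $r_{i,m}\coloneqq r_{m,m}$ for $i>m$ gives an $l$-Lipschitz sequence of growth $k$ in doubling scales. Pass to the non-principal ultraproduct $G^*=\faktor{\prod G_m}{\uu}$ in the associated language $\lang$. By \cref{t:main theorem}(1), $\faktor{X}{o_r(1)}$ is then a near-subgroup with respect to the ideal $\mu$ built in the proof of that theorem.

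Applying the Stabilizer Theorem (\cref{t:stabilizer theorem}) yields a $\bigwedge_\omega$-definable normal subgroup $S\subseteq\left(\faktor{X}{o_r(1)}\right)^4$ of small index in $\faktor{G\cdot o_r(1)}{o_r(1)}$; its preimage $\tilde{S}\coloneqq\quot^{-1}(S)\subseteq X^4\cdot o_r(1)$ is a normal subgroup of $G\cdot o_r(1)$ with $\tilde{S}^n=\tilde{S}$ and $x^{-1}\tilde{S}x=\tilde{S}$ for every $x\in G^*$. Write $\tilde{S}=\bigcap_j D_j$ as a countable decreasing intersection of definable symmetric subsets. Since $\tilde{S}$ is a subgroup, $\aleph_1$-saturation gives $\bigcap_j D_j^n=\tilde{S}$, and since $\tilde{S}\subseteq X^4\cdot o_r(1)=\bigcap_s X^4\cdot\D_{r_s}$, a standard compactness argument applied jointly to the type-definable conditions ``$Y^n\subseteq X^4\cdot\D_{r_s}$'' and ``$x^{-1}Y^nx\subseteq X^4\cdot\D_{r_s}$ for all $x\in X^n$'' produces some $j$ for which $Y\coloneqq D_j$ satisfies both.

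For the density, recall that $\tilde{S}$ is $\widehat{\mu}_\infty$-wide, i.e., no definable superset of $\tilde{S}$ belongs to $\mu_\infty=\liminf\mu_i$. By the very definition of $\mu_\infty$, this means that for our chosen $Y$ the set of indices $\{i\in\N:\lambda_i(Y)>0\}$ is infinite. Choose any $n$ such indices $i_1<\cdots<i_n$ and set $c_k\coloneqq\lambda_{i_k}(Y)>0$. By \L{o}\'{s}'s Theorem, for ultrafilter-many $m$ the $m$-th coordinate $Y_m$ is a definable subset of $G_m$ satisfying the finitary analogues of all properties of $Y$ (symmetry, $Y_m^n\subseteq \D_{r_{s,m}}(X_m^4)$, conjugation invariance up to $\D_{r_{s,m}}$), together with $\lambda_{i_k,m}(Y_m)\geq c_k/2$ for each $k$. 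The discretisation-covering comparisons of \cref{l:equivalence discretisation numbers,l:lemma 1 application to metric groups} convert this last inequality into $\Nn_{r_{i_k,m}}(Y_m)\gtrsim c_k\cdot\Nn_{r_{i_k,m}}(X_m)/k_{i_k}$. Taking $m\geq i_n+1$ large enough that $\frac{1}{m}$ falls below each $c_k/(2k_{i_k})$ --- a choice which defines $m(k,l,n,s)$ --- yields the required $\Nn_{r_{i_k,m}}(Y_m)\geq\frac{1}{m}\Nn_{r_{i_k,m}}(X_m)$ for every $i_k\in I\coloneqq\{i_1,\ldots,i_n\}\subseteq\{0,\ldots,m-1\}$, contradicting the assumption.

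The main obstacle is arranging for a single definable $Y\supseteq\tilde{S}$ to simultaneously control $Y^n$ (through the product and the conjugation inclusions) and to support a positive lower bound on $\lambda_i$ at $n$ distinct scales. The product and conjugation conditions are handled by the uniform $\aleph_1$-saturation argument sketched above, once one observes that $\tilde{S}$ is a normal subgroup contained in $X^4\cdot o_r(1)=\bigcap_s X^4\cdot\D_{r_s}$; meanwhile the density condition is automatic from the wideness of $\tilde{S}$ in $\widehat{\mu}_\infty$, the freedom to let $m\to\infty$ being what allows the weak quantitative bound $\frac{1}{m}$ to absorb the (non-uniform) positive constants $c_k$.
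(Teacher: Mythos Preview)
Your overall architecture matches the paper's: contradiction, ultraproduct, near-subgroup via \cref{t:main theorem}, Stabilizer Theorem, compactness to extract a definable $Y$, then wideness to get positive $\lambda_i$ at infinitely many scales, and \L{o}\'{s} to finish.

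The gap is in the density step. You assert that ``$\tilde{S}$ is $\widehat{\mu}_\infty$-wide'' and hence ``$\lambda_i(Y)>0$ for infinitely many $i$''. But $\widehat{\mu}_\infty$ and each $\lambda_i$ are, by construction, defined only on subsets of $X^3$: $\lambda_i$ is the normalised covering number $\Nn^\cov_{r_i}(\,\cdot\,/X^3)/\Nn^\cov_{r_i}(X^3/X^3)$, and $\widehat{\mu}_\infty$ is an ideal of $\bigwedge_\omega$-definable subsets of $X^3$. Your $\tilde{S}$ lives in $X^4\cdot o_r(1)$ and your $Y$ is a definable superset of it, so neither is a subset of $X^3$, and ``$\lambda_i(Y)>0$'' is not meaningful (if anything it is trivially $+\infty$ once $Y\not\subseteq\D_{r_i}(X^3)$, which gives no information). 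Passing to $Y\cap X^3$ does not help without further argument: nothing you have written rules out that $\tilde{S}\cap X^3$ reduces essentially to $o_r(1)$, which need not be wide.

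The paper closes this gap by using the \emph{extra} output of \cref{t:stabilizer theorem}: there is a wide type $p\subseteq \faktor{X}{o_r(1)}$ and, for any $a\in p$, one has $aS'=ppp^{-1}\subseteq \faktor{X^3}{o_r(1)}$, a translate of $S'$ that lies entirely inside $\faktor{X^3}{o_r(1)}$ and contains the wide $p$. Pulling back, $S_a\coloneqq a^{-1}S\cap X^3$ is a $\bigwedge_\omega$-definable subset of $X^3$ that is genuinely $\widehat{\mu}_\infty$-wide, so $Y_a\coloneqq a^{-1}Y\cap X^3$ satisfies $Y_a\notin\mu_i$ for infinitely many $i$. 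The resulting lower bound on $\Nn_{r_i}(Y_a)$ is then transferred to $\Nn_{r_i}(Y)$ via $aY_a\subseteq Y$ and left-invariance of the metric. In short: you need the translate by an element of the wide type $p$ to land inside $X^3$, where the ideal and the $\lambda_i$ actually live; this is precisely the role of the clause $ppp^{-1}=pS=yS$ in \cref{t:stabilizer theorem}, which your argument never invokes.
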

\begin{proof}
Aiming a contradiction, suppose otherwise. Thus, we get an $l${\hyp}Lipschitz sequence of growth $k$ in doubling scales $(G_m,X_m,r_{i,m})_{i\leq m\in\N}$ of counterexamples. Let $(G^*,X,\ldots)$ be a non{\hyp}principal ultraproduct as in \cref{t:main theorem}. Write $G$ for the $\bigvee_0${\hyp}definable subgroup generated by $X$ and consider $\faktor{G\cdot o_r(1)}{o_r(1)}$.

We have that $\faktor{X}{o_r(1)}$ is a near{\hyp}subgroup by \cref{t:main theorem}. Moreover, recall that, in \cref{t:main theorem}, the ideal $\mu$ of $\bigwedge_\omega${\hyp}definable subsets of $\faktor{X^3}{o_r(1)}$ is defined as $\mu=f_*\widehat{\mu}_{\infty}$, where $f=\quot_{\mid X^3}\map X^3\to \faktor{X^3}{o_r(1)}$ is the canonical projection and $\widehat{\mu}_{\infty}$ is the compactification of $\mu_{\infty}=\lim\inf\, \mu_i$ with \[Y\in \mu_i\Leftrightarrow \frac{\Nn^\cov_{r_i}(Y/X^3)}{\Nn^\cov_{r_i}(X^3/X^3)}=0.\] 

By the Stabilizer \cref{t:stabilizer theorem}, we get a $\bigwedge_\omega${\hyp}definable normal subgroup $S'\leq \faktor{G}{o_r(1)}$ of bounded index contained in $\faktor{X^4}{o_r(1)}$ such that $S'=(pp^{-1})^2$ for some wide type $p$ in $\faktor{X}{o_r(1)}$ and $ppp^{-1}=pS'=aS'$ for any $a\in p$.

Note that $\quot\map G\cdot o_r(1)\to \faktor{G\cdot o_r(1)}{o_r(1)}$ is a piecewise bounded and proper $\bigwedge_0${\hyp}definable homomorphism. Take $S=\quot^{-1}(S')$. Then, $S\trianglelefteq G\cdot o_r(1)$ is $\bigwedge_\omega${\hyp}definable with $x^{-1}S^nx=S\subseteq X^4\cdot o_r(1)$ for any $x\in X^n$ and $o_r(1)\leq S$. Now, by compactness, we find a definable symmetric set $Y$ containing $S$ such that $x^{-1}Y^nx\subseteq \D_{r_s}(X^4)$ for any $x\in X^n$. 

Pick $a\in p$, so we have $a\cdot S'\subseteq \faktor{X^3}{o_r(1)}$. Write $S'_a\coloneqq a^{-1}S'\cap \faktor{X^3}{o_r(1)}$, $S_a\coloneqq a^{-1}S\cap X^3=\quot^{-1}_{\mid X^3}(S'_a)$ and $Y_a\coloneqq a^{-1}Y\cap X^3$. As $S'$ is wide, we know that $S'_a$ is wide. Hence, $S_a\notin \widehat{\mu}_{\infty}$ and so $Y_a\notin \mu_{\infty}$. Then, $Y_a\notin \mu_i$ for infinitely many $i\in\N$. Choose $n<i_1<i_2<\cdots<i_n$ such that $Y_a\notin \mu_{i_t}$ for each $t\in\{1,\ldots,n\}$, and set $I=\{i_1,\ldots,i_n\}$. Hence, for each $i\in I$, there is $m_i\in\N$ such that 
\[\frac{\Nn^\cov_{r_i}(Y/X^3)}{\Nn^\cov_{r_i}(X^3/X^3)}\geq \frac{\Nn^\cov_{r_i}(Y_a/X^3)}{\Nn^\cov_{r_i}(X^3/X^3)}\geq \frac{1}{m_i}.\]
Using also \cref{l:subadditivity and monotonicity of discretisation numbers,l:equivalence discretisation numbers}, we get 
\[\frac{\Nn_{r_i}(Y)}{\Nn_{r_i}(X)}\geq \frac{1}{k_i}\frac{\Nn^\cov_{r_i}(Y/X^3)}{\Nn^\cov_{r_i}(X^3/X^3)}\geq \frac{1}{k_im_i}.\] 
Taking $m_0=\max\{k_im_i\sth i\in I\}$, we conclude $\sfrac{\Nn_{r_i}(Y)}{\Nn_{r_i}(X)}\geq \frac{1}{m_0}$ for each $i\in I$.

By {\L}o\'{s}'s Theorem, we have some $m>\max\{m_0,i_1,\ldots,i_n,s\}$ such that $I\subseteq \{0,\ldots,m-1\}$ with $|I|=n$ and, for $Y_m=Y(G_m)$, 
\[\begin{array}{ll} 
\mathrm{(1)}& 1\in Y_m=Y_m^{-1},\\
\mathrm{(2)} & x^{-1}Y_m^{n}x\subseteq \D_{r_s}(X_m^4)\mathrm{\ for\ all\ }x\in X^n_m, \mathrm{\ and}\\
\mathrm{(3)}& \Nn_{r_{i,m}}(Y_m)\geq \frac{1}{m} \Nn_{r_{i,m}}(X_m)\ \mathrm{for\ any}\ i\in I;\end{array}\]
contradicting the assumption that $(G_m,X_m,(r_{i,m})_{i\leq m})$ is a counterexample. 
\end{proof}

For the following corollary, we will use a slightly improved version of \cite[Corollary 4.18]{hrushovski2012stable} which already appears in \cite[Corollary 5.6]{dries2015approximate}. For convenience, we provide here the full statement: 
\begin{lem} \label{l:lemma corollary 2} Fix $k,N\in\N$. Then, there exists $c=c(k,N)$ with the following property: \smallskip

Let $G$ be a group and $X$ a finite $k${\hyp}approximate subgroup such that $x^N=1$ for any $x\in X^2$. Then, there is a subgroup $S$ of $G$ with $S\subseteq X^4$ such that $S$ and $X$ are $c${\hyp}commensurable.
\end{lem}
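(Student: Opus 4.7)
The plan is to argue by contradiction via a non-principal ultraproduct of finite counterexamples, and to use the bounded-exponent hypothesis to trivialise the associated Lie model. This is essentially \cite[Corollary 4.18]{hrushovski2012stable}; the improvement is that only $X^2$, rather than all of $\langle X\rangle$, needs bounded exponent.

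Suppose the conclusion fails for fixed $k,N$. Fix a sequence $(G_m,X_m)_{m\in\N}$ of finite $k$-approximate subgroups with $x^N=1$ for every $x\in X_m^2$ such that for every $m$ no subgroup $S\subseteq X_m^4$ is $m$-commensurable with $X_m$. Form $G^\ast=\faktor{\prod G_m}{\uu}$ in the language of groups enriched with a predicate for $X_m$, and set $X=\faktor{\prod X_m}{\uu}$; by {\L}o\'{s}, $X$ is a symmetric $k$-approximate subgroup and $x^N=1$ for every $x\in X^2$. Equip each $X_m^4$ with its normalised counting probability measure, take the ultralimit $\nu$, and expand the language as in Section 3 so that $\nu$ becomes definable. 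The ideal $\mu$ of $\bigwedge_\omega${\hyp}definable measure-zero subsets is then compact, locally atomic and $S_1$, and $\nu(X)>0$; so $X$ is a $\mu${\hyp}near{\hyp}subgroup.

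Apply the Stabiliser Theorem (\cref{t:stabilizer theorem}) to obtain a wide $\bigwedge_\omega${\hyp}definable normal subgroup $S^\ast\trianglelefteq\langle X\rangle$ of small index with $S^\ast\subseteq X^4$, and the Lie model (\cref{t:rough lie model} with $T=\{1\}$ and $n=1$) to present a connected Lie group $L=\faktor{H}{K}$ with $K\subseteq X^6$ containing $S^\ast$, and $\pi(H\cap X^2)$ a compact neighbourhood of the identity in $L$ that generates $L$. Each element of $\pi(H\cap X^2)$ is the image of an element of $X^2$ and thus has order dividing $N$. Via the exponential map $\exp\colon\mathfrak{l}\to L$, the equation $\exp(v)^N=\exp(Nv)=1$ with $v$ small forces $Nv$ into the discrete subset $\ker\exp$ of $\mathfrak{l}$, hence $v=0$; therefore $L$ is discrete and, being connected, trivial. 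In particular $H=K\subseteq X^6$ is internally finite, and since $H\cap X^2$ is commensurable with $X$, the whole of $\langle X\rangle$ is commensurable with $H$ and is itself internally finite.

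In this internally finite pseudofinite setting, a compactness and saturation argument replaces the type-definable $S^\ast\subseteq X^4$ by an internally definable subgroup $S'\subseteq X^4$ commensurable with $S^\ast$; transferring via {\L}o\'{s} yields, for $\uu$-almost every $m$, an honest subgroup $S_m\leq G_m$ with $S_m\subseteq X_m^4$ that is $c$-commensurable with $X_m$ for some uniform $c=c(k,N)$, contradicting the assumption when $m>c$. I expect the main obstacle to be this final descent step from the type-definable $S^\ast$ to an internally definable subgroup: the bounded-exponent hypothesis on $X^2$ is precisely what rules out a non-trivial connected Lie part and permits the descent, whereas in its absence one would be left with a non-trivial Lie quotient and merely a type-definable subgroup in the ultraproduct.
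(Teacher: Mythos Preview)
Your approach is essentially the paper's: contradiction via a non-principal ultraproduct of counterexamples, the ultralimit of normalised counting measures to exhibit $X$ as a near-subgroup, a Lie model, and the observation that the $N$-torsion in $X^2$ forces the connected Lie group $L$ to be trivial, whence the kernel itself is the desired subgroup.

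The difference is in which Lie model theorem you invoke and the consequences this has for the endgame. The paper cites \cite[Theorem 4.2]{hrushovski2012stable} directly, which in the classical (non-rough) setting gives $K\subseteq X^4$. Once $L$ is trivial one has $S\coloneqq K=H=H\cap X^4$, and this is outright \emph{definable}: $K$ is $\bigwedge_\omega$-definable, while $G\setminus H$ is piecewise $\bigwedge_\omega$-definable and $H$ lies in the single piece $X^4$, so $H$ is both closed and open in the logic topology and hence definable by saturation. {\L}o\'{s} then finishes immediately. By contrast, you invoke \cref{t:rough lie model} with $T=\{1\}$ and $n=1$, which only yields $K\subseteq X^6$; this is why you feel compelled to bring in the Stabiliser Theorem separately to produce $S^\ast\subseteq X^4$ and then worry about descending from the type-definable $S^\ast$ to something definable. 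That descent can be carried out (once $H=K$ is definable and $[H:S^\ast]$ is finite, $S^\ast$ is itself definable as the complement in $H$ of finitely many $\bigwedge$-definable cosets), but your sketch leaves this implicit. The ``main obstacle'' you flag is thus an artefact of quoting the rough Lie model rather than the sharper classical one; with the right citation it simply does not arise.
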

\begin{proof} Aiming a contradiction, suppose otherwise. Then, for each $c\in \N$ we have a counterexample $(G_c,X_c)_{c\in\N}$. Take a non{\hyp}principal ultraproduct $(G,X)$. Using the ultralimit of the counting measures (and taking the language making it definable), it follows that $X$ is a definable near{\hyp}subgroup $k${\hyp}approximate subgroup. By the Lie model Theorem \cite[Theorem 4.2]{hrushovski2012stable}, we find a connected Lie model $\pi\map H\to L=\faktor{H}{K}$ where $K\subseteq X^4$, $\pi(H\cap X^2)$ is a compact neighbourhood of the identity in $L$ and $H\cap X^4$ generates $H$ and is $e${\hyp}commensurable to $X$ for some $e$. 

Now, for every $g\in\pi(H\cap X^2)$, $g^N=1$. As $L$ is a Lie group, it has a neighbourhood $U_0$ of the identity such that $g^2=h^2$ if and only if $g=h$. Take a neighbourhood of the identity $U$ in $L$ such that $U^N\subseteq U_0$. Therefore, for any $g,h\in U$, $g^N=h^N$ if and only if $g=h$. Hence, $U\cap \pi(H\cap X^4)=\{1\}$, concluding that $L$ is discrete. As it is connected, then $L$ is trivial, so $S\coloneqq K=H=H\cap X^4$ is a definable subgroup contained in $X^4$ and $e${\hyp}commensurable to $X$. By {\L}o\'{s}'s Theorem, we conclude that there is some $c\geq e$ such that $S_e$ is a subgroup of $G_e$ contained in $X^4$ and $e${\hyp}commensurable to $X_e$, contradicting that $(G_e,X_e)$ is a counterexample for $e$.
\end{proof}
\begin{coro} \label{c:corollary 2 general}
Fix $k,l,N,n,s\in \N$. There are $m\coloneqq m(k,l,N,n,s)\in\N$ and $c=c(k,N)\in\N$ such that the following holds:\smallskip

Let $G$ be a metric group, $X$ a $(l,r_0)${\hyp}Lipschitz symmetric subset and $r_0,\ldots,$ $r_m$ with $2r_i\leq r_{i-1}$ such that $\dd(g^N,1)\leq r_m$ for all $g\in X^8$ and 
\[\Nn_{r_i}(X^9)\leq k\cdot \Nn_{9r_i}(X)<\infty.\]
Then, there is a symmetric subset $Y\subseteq X^{16}$ such that $Y$ and $X^2$ are $(c,r_s)${\hyp}commensurable and $Y^n\subseteq \D_{r_s}(Y)$.
\end{coro}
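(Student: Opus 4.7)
The plan is to argue by contradiction, following the template of \cref{c:corollary 1 general}. Assume the statement fails; for each $m\in\N$ we obtain a counterexample $(G_m,X_m,r_{0,m},\ldots,r_{m,m})$, and together these form an $l${\hyp}Lipschitz sequence of constant growth $k$ in doubling scales (extending $r_{i,m}=r_{m,m}$ for $i>m$). Let $(G^*,X,\ldots)$ be a non{\hyp}principal ultraproduct as in \cref{t:main theorem}, and apply the theorem to obtain a connected Lie model $\pi\map H\to L=\faktor{H}{K}$ with $o_r(1)\trianglelefteq K\subseteq X^8\cdot o_r(1)$, with $H\cap X^4$ $\bigwedge_\omega${\hyp}definable and $o_r(1)${\hyp}roughly commensurable with $X^2$, and $\pi(H\cap X^4)$ a compact neighbourhood of the identity in $L$.

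The key step is to show that the bounded exponent assumption forces $L$ to be trivial. Since $\dd(g^N,1)\leq r_{m,m}$ for every $g\in X^8_m$, {\L}o\'{s}'s Theorem gives $g^N\in o_r(1)\subseteq K$ for every $g\in X^8$, hence $\pi(g)^N=1_L$ for every $g\in H\cap X^8$. This is exactly the hypothesis exploited in \cref{l:lemma corollary 2}: because $\pi(H\cap X^4)$ is a compact neighbourhood of the identity and because standard Lie theory (via the exponential map) provides a neighbourhood of the identity in $L$ on which $v^N=1$ forces $v=1$, the Lie group $L$ must be discrete, hence trivial by connectedness. Therefore $H=K\subseteq X^8\cdot o_r(1)$, and in particular $o_r(1)\subseteq H$.

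Having reduced to a genuine bounded $\bigwedge_\omega${\hyp}definable subgroup, set $Y_*\coloneqq H\cap X^{16}$. This is symmetric, contained in $X^{16}$, and contains $H\cap X^4$; combined with the $o_r(1)${\hyp}commensurability of $H\cap X^4$ with $X^2$, the containment $Y_*\subseteq H\subseteq X^8\cdot o_r(1)$, and the $(k,o_r(1))${\hyp}rough approximate subgroup property of $X^2$ (\cref{t:main theorem}(2)), we get that $Y_*$ and $X^2$ are $(c,o_r(1))${\hyp}commensurable for some $c=c(k,N)$. The crucial near{\hyp}closure observation uses $o_r(1)\subseteq H$: for $y\in Y_*^n\subseteq H\subseteq X^8\cdot o_r(1)$, write $y=x\epsilon$ with $x\in X^8$ and $\epsilon\in o_r(1)$; then $x=y\epsilon^{-1}\in H$, so $x\in H\cap X^8\subseteq Y_*$, and therefore $y\in Y_*\cdot\D_{r_s}(1)=\D_{r_s}(Y_*)$.

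The final step is the routine $\aleph_1${\hyp}saturation argument: since $Y_*$ is $\bigwedge_\omega${\hyp}definable and the required properties are upper bounds on $\bigwedge_\omega${\hyp}definable sets (using compactness of $\D_{r_s}(1)$), there is a definable symmetric $Y\subseteq X^{16}$ approximating $Y_*$ and inheriting both properties with $r_s$ in place of $o_r(1)$. Then {\L}o\'{s}'s Theorem produces, for some large $m$, a subset $Y_m\subseteq X_m^{16}$ contradicting the choice of $(G_m,X_m,r_{i,m})$ as a counterexample. The main obstacle is the Lie{\hyp}theoretic collapse $L=\{1\}$, which requires importing the exponential{\hyp}map argument from \cref{l:lemma corollary 2} into the metric setting; the remaining steps are standard compactness.
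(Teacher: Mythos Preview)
Your overall structure is right, and in particular the collapse of $L$ to the trivial group via the exponent-$N$ hypothesis is correct when one uses \cref{t:main theorem}. The problem is the claimed dependence $c=c(k,N)$. \cref{t:main theorem} asserts only that $H\cap X^4$ and $X^2$ are $o_r(1)${\hyp}roughly commensurable; it gives \emph{no} bound on the number of translates in terms of $k$. Of the three ingredients you list, only the $(k,o_r(1))${\hyp}rough approximate subgroup property carries an explicit constant, and that handles just one direction ($Y_*$ is covered by $k^3$ translates of $X^2\cdot o_r(1)$). The other direction --- covering $X^2$ by translates of $Y_*\cdot o_r(1)$ --- comes from \cref{t:main theorem}(a), whose constant depends on the particular ultraproduct and hence a priori on the full tuple $(k,l,N,n,s)$. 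Since the counterexample sequence must be chosen \emph{after} $c$ is fixed, you cannot read $c$ off the ultraproduct without circularity. Your argument thus proves the weaker statement with $c=c(k,l,N,n,s)$, not the one stated.

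The paper handles this by invoking \cref{c:metric lie model carolino} in place of \cref{t:main theorem}. That variant (based on Carolino's refinement of Gleason--Yamabe) yields $(c_0(k),o_r(1))${\hyp}rough commensurability with $c_0$ a universal function of $k$ alone, fixed in advance of the contradiction argument. The price is that $L$ is no longer asserted to be connected, so the exponent argument only forces $L$ to be discrete; then $\pi(H\cap X^4)$ is a \emph{finite} $k^3${\hyp}approximate subgroup of exponent dividing $N$, and \cref{l:lemma corollary 2} is applied in full --- not merely its internal Lie{\hyp}theoretic step --- to produce a genuine finite subgroup $S_0\subseteq\pi(H\cap X^4)^4$ that is $c_1(k,N)${\hyp}commensurable to $\pi(H\cap X^4)$. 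Pulling back $S=\pi^{-1}(S_0)\subseteq X^{16}\cdot o_r(1)$ and taking $Y=S\cap X^{16}$ then gives $c=c_0(k)\,c_1(k,N)$, depending only on $(k,N)$ as required. This is also why the conclusion lands in $X^{16}$ rather than the $X^8$ your argument would suggest.
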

\begin{proof} Aiming a contradiction, suppose otherwise. Thus, we get an $l${\hyp}Lipschitz sequence of growth $k$ in doubling scales $(G_m,X_m,r_{i,m})_{i\leq m\in\N}$ of counterexamples. Take a non{\hyp}principal ultraproduct $(G^*,X,\ldots)$ as in \cref{c:metric lie model carolino}. Write $G$ for the $\bigvee_0${\hyp}definable subgroup generated by $X$. By \cref{c:metric lie model carolino}, we have a Lie model $\pi\map H\to L\coloneqq\faktor{H}{K}$ of $G\cdot o_r(1)$ with $o_r(1)\subseteq K\subseteq X^{28}\cdot o_r(1)$ such that $\pi(H\cap X^4)$ is a neighbourhood of the identity in $L$ and $H\cap X^4$ is $(c_0(k),o_r(1))${\hyp}roughly commensurable to $X^2$, where $c_0(k)$ only depends on $k$. 

Now, since $\dd(g^N,1)<r_{m,m}<r_{i,m}$ for every $g\in X^8_m$ and $i\leq m\in \N$, it follows by {\L}o\'{s}' Theorem that $g^N\in o_r(1)$ for every $g\in X^8$. Then, $\pi(g)^N=1$ for every $g\in H\cap X^4$. Now, $\pi(H\cap X^4)$ is a neighbourhood of the identity of $L$. As $L$ is a Lie group, it has a neighbourhood $U_0$ of the identity such that $g^2=h^2$ if and only if $g=h$. Take a neighbourhood of the identity $U$ in $L$ such that $U^N\subseteq U_0$. Therefore, for any $g,h\in U$, $g^N=h^N$ if and only if $g=h$. Hence, $U\cap \pi(H\cap X^4)=\{1\}$, concluding that $L$ is discrete. As $\pi(H\cap X^4)$ is compact, we have that it is finite. Note that $\pi(H\cap X^4)$ is a $k^3${\hyp}approximate subgroup by \cite[Lemma 2.3]{machado2023closed} and $g^N=1$ for any $g\in\pi(H\cap X^4)^2$. Applying \cref{l:lemma corollary 2}, it follows that there is a finite subgroup $S_0\subseteq \pi(H\cap X^4)^4$ $c_1(k,N)${\hyp}commensurable to $\pi(H\cap X^4)$, where $c_1(k,N)$ is a constant that only depends on $k$ and $N$. Since $\pi$ is a group homomorphism, we get that $S\coloneqq \pi^{-1}(S_0)\subseteq X^{16}\cdot o_r(1)$ is a subgroup of $G\cdot o_r(1)$ $c${\hyp}commensurable to $X^2\cdot o_r(1)$, where $c\coloneqq c_0(k)c_1(k,N)$ only depends on $k$ and $N$. As $\pi$ is continuous from the logic topology, $S$ is open and compact, concluding that it is a relative definable subgroup of $G\cdot o_r(1)$ contained in $X^{16}\cdot o_r(1)$. 

Take $Y=S\cap X^{16}$. Hence, $Y\subseteq X^{16}$ is definable with $Y^n\subseteq H=Y\cdot o_r(1)$ and $(c,o_r(1))${\hyp}roughly commensurable to $X^2$. We have then $Y^n\subseteq \D_{r_s}(Y)$ and $Y$ is $(c,r_s)${\hyp}commensurable to $X^2$. By {\L}o\'{s}' Theorem, we conclude that there is some $m\in\N$ such that $Y_m\subseteq X^{16}_m$ is $(c,r_{s,m})${\hyp}commensurable to $X^2_m$ with $Y^n_m\subseteq \D_{r_{s,m}}(Y_m)$, contradicting the assumption that $(G_m,X_m,(r_{i,m})_{i\leq m})$ is a counterexample.  
\end{proof}

For the last corollary, we will use the following basic fact about Lie groups. A similar fact was already used and explained in \cite[Corollary 4.13]{hrushovski2012stable}. 
\begin{lem} \label{l:lemma corollary 3} Let $L$ be a Lie group and $U$ a compact neighbourhood of the identity. There are a sequence $(U_n)_{n\in\N}$ of symmetric compact neighbourhoods of the identity with $U_{n+1}\subseteq U^\circ_n$ and $U_0\subseteq U^{\circ}$ satisfying the following properties:
\[\begin{array}{ll} 
\mathrm{(1)}&U_n \mathrm{\ is\ covered\ by\ }17^{\dim(L)}\mathrm{\ translates\ of\ }U_{n+1}\mathrm{\ for\ each\ }n\in\N.\\
\mathrm{(2)}& U_{n+1}^2\subseteq U_n\mathrm{\ for\ each\ }n\in\N.\\
\mathrm{(3)}& x^{-1}U_{n+1}x\subseteq U_n\mathrm{\ for\ each\ }x\in U_0\mathrm{\ and\ }n\in\N.\\
\mathrm{(4)}& [U_{n_1},U_{n_2}]\subseteq U_{n}\mathrm{\ for\ each\ }n,n_1,n_2\in\N\mathrm{\ with\ }n\leq n_1+n_2.\\
\mathrm{(5)}& \mathrm{If\ }x^2=y^2\mathrm{\ with\ }x,y\in U_0\mathrm{,\ then\ }x=y.\\
\mathrm{(6)}& U_{n+4}=\{x\in U_0\sth x^{17}\in U_n\}\mathrm{\ for\ each\ }n\in\N.
\end{array}\]
\end{lem}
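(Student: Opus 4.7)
The plan is to realise the sequence $(U_n)$ via exponential coordinates, letting the scaling factor be forced by the exact identity in property (6). I would fix a norm on the Lie algebra $\mathfrak{l}$ and a radius $r^{*}>0$ small enough that $\exp\colon B_{r^{*}}\to \exp(B_{r^{*}})\subseteq U^\circ$ is a diffeomorphism, the Baker--Campbell--Hausdorff (BCH) series converges on $B_{r^{*}}\times B_{r^{*}}$, and a uniform constant $C$ controls the standard bounds
\[\|\log(\exp v\cdot \exp w)-v-w\|\le C\|v\|\|w\|,\quad \|\log[\exp v,\exp w]\|\le C\|v\|\|w\|,\quad \|\mathrm{Ad}(\exp v)-\mathrm{id}\|\le C\|v\|\]
on $B_{r^{*}}$. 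Setting $d=\dim(L)$, I would then fix a small $r_0\in(0,r^{*}/17]$ (constrained by finitely many upper bounds below) and define $r_n:=r_0\cdot 17^{-n/4}$ and $U_n:=\exp(B_{r_n})$. Each $U_n$ is symmetric, compact, a neighbourhood of $1$, and $U_{n+1}\subseteq U_n^\circ\subseteq U^\circ$.

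The scaling factor $17^{1/4}$ is dictated by property (6). Since $t\mapsto\exp(tv)$ is a one-parameter subgroup, $\exp(v)^{17}=\exp(17v)$ on the nose; so for $x=\exp(v)\in U_0$, injectivity of $\exp$ on $B_{r^{*}}$ combined with the choice $17r_0\le r^{*}$ yields $x^{17}\in U_n\Leftrightarrow 17v\in B_{r_n}\Leftrightarrow v\in B_{r_{n+4}}$. Property (5) is the same observation at exponent $2$: if $x,y\in U_0$ with $x^2=y^2$, then $2\log x,2\log y\in B_{2r_0}\subseteq B_{r^{*}}$ forces $\log x=\log y$. Properties (2)--(4) are routine BCH estimates: for (2), $\|\log(xy)\|\le(2+Cr_{n+1})r_{n+1}\le 17^{1/4}r_{n+1}=r_n$ provided $Cr_0\le 17^{1/4}-2$; for (3), $\log(x^{-1}yx)=\mathrm{Ad}(\exp(-\log x))(\log y)$ exactly, so $\|\log(x^{-1}yx)\|\le(1+Cr_0)r_{n+1}\le r_n$; and for (4), $\|\log[x,y]\|\le C\|\log x\|\|\log y\|\le Cr_0\cdot r_{n_1+n_2}\le r_n$ whenever $n\le n_1+n_2$ and $Cr_0\le 1$.

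For property (1), I would use the standard volumetric bound: in the normed space $\mathfrak{l}$, the ball $B_{r_n}$ is covered by at most $(1+4\cdot 17^{1/4})^d<17^d$ additive translates $v_i+B_{r_{n+1}/2}$. Converting to the group, for $y=\exp(u)\in U_n$ and $v_i$ chosen with $\|u-v_i\|\le r_{n+1}/2$, I set $z:=\exp(-v_i)\exp(u)$ and apply BCH to get $\|\log z-(u-v_i)\|\le C\|v_i\|\|u\|\le Cr_n^2$, so $\|\log z\|\le r_{n+1}/2+Cr_n^2\le r_{n+1}$ for $r_0$ small, placing $z\in U_{n+1}$ and hence $y\in\exp(v_i)\cdot U_{n+1}$. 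The main obstacle is the very tight margin in (2), where $17^{1/4}-2\approx 0.03$ leaves little room for the BCH error; but the sufficient upper bounds on $r_0$ are all of the form $Cr_0\le\mathrm{const}>0$, and they can be met simultaneously by shrinking $r_0$.
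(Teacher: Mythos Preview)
Your proposal is correct and follows essentially the same route as the paper: both realise $U_n$ as $\exp$ of concentric balls in the Lie algebra with ratio $17^{1/4}$ forced by property~(6), verify (2)--(4) via BCH bounds of the form $\|\log(\exp v\exp w)-(v+w)\|\le C\|v\|\|w\|$, obtain (5)--(6) from injectivity of $\exp$ on a ball of radius $\ge 17r_0$, and deduce (1) by a volumetric covering in the Lie algebra transferred to the group with a half-step of slack to absorb the BCH error. The only cosmetic differences are that the paper uses the raw BCH estimate for conjugation rather than the exact $\mathrm{Ad}$ identity you invoke in~(3), and that for~(1) it covers $B_n$ by $17^d$ translates of $B_{n+2}$ (two applications of a $\sqrt{17}^d$ packing bound) rather than your direct $(1+4\cdot 17^{1/4})^d<17^d$ covering by half-radius balls.
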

\begin{proof} Let $U'$ be a small neighbourhood of the identity in $U$ where the exponential map is a diffeomorphism. Take $\widetilde{U}$ symmetric neighbourhood of the identity with $\widetilde{U}^2\subseteq U'$ and take $B=\log(\widetilde{U})$ be the corresponding neighbourhood of the identity of the Lie algebra. Fix some inner product in the Lie algebra. 

Consider the operation $x \ast y\coloneqq \log(\exp(x)\exp(y))$ defined in the Lie algebra for $x,y\in B$. Note that $x\ast 0=x=0\ast x$, $x\ast (-x)=(-x)\ast x=0$  and $(x\ast y)\ast z=x\ast (y\ast z)$ when defined. Also, note that $\ast$ is smooth, so there is $\varepsilon_0>0$ and $C_0>0$ such that
\[\|x \ast y-(x+y)\|\leq C_0\|x\|\|y\|,\]
for $\|x\|,\|y\|<\varepsilon_0$. By the triangular inequality, one can find $\varepsilon_1>0$ and $C_1>0$ such that, for $\|x\|,\|y\|<\varepsilon_1$, 
\[\|x\ast y\ast (-x)\ast (-y)\|\leq C_1\|x\|\|y\|\mathrm{\ and\ }\|x\ast y\ast (-x)-y\|\leq C_1\|x\|\|y\|.\] 

Take $\varepsilon=\min\{\varepsilon_1,\frac{1}{2C_1},\frac{1}{17 C_0}\}$ and set $B_n=\mathbb{D}(0,\sqrt[4]{17}^{-n-1}\varepsilon)$ and $U_n=\exp(B_n)$. Then, we get that $(U_n)_{n\in\N}$ satisfies the desired conditions: 
\begin{enumerate}[label={\rm{(\arabic*)}}, wide]
\item Note that $\Vol(B_{n+1})=\sqrt[4]{17}^{-d}\cdot\Vol(B_n)$ where $d=\dim(L)$. Take $Z\subseteq B_n$ maximal such that $\{z+B_{n+1}\}_{z\in Z}$ is a disjoint family. Then, $B_n\subseteq Z+B_{n+1}$ and $|Z|\Vol(B_{n+1})=\Vol(Z+B_{n+1})\leq \Vol(B_{n-1})$, concluding that $|Z|\leq \sqrt{17}^{d}$ where $d=\dim(L)$. Take $Z\subseteq B_{n-1}$ such that $B_n\subseteq Z+B_{n+2}$ with $|Z|\leq 17^{d}$, where $d=\dim(L)$. Then, for every element $x\in B_n$, there is $z\in Z$ such that $x-z\in B_{n+1}$. Hence, $\|(-z)\ast x\|\leq \sqrt[4]{17}^{-n-2}\varepsilon+C_0\sqrt[4]{17}^{-2n-1}\varepsilon^2\leq \sqrt[4]{17}^{-n-1}\cdot\varepsilon$. Thus, $\exp(Z)$ satisfies $U_n\subseteq \exp(Z) U_{n+1}$ with $|\exp(Z)|\leq 17^d$ with $d=\dim(L)$.
\item Given $a,b\in U_n$, then $a=\exp(x)$, $b=\exp(y)$ where $\|x\|,\|y\|\leq \sqrt[4]{17}^{-n-1}\varepsilon$. Thus, $\|x\ast y\|\leq \|x+y\|+C_0\|x\|\|y\|\leq (2\cdot \sqrt[4]{17}^{-n-1}+\frac{1}{17} \sqrt[4]{17}^{-2n-2}) \varepsilon\leq (\frac{2}{\sqrt[4]{17}}+\frac{\sqrt[4]{17}^{-n}}{17\sqrt{17}}) \sqrt[4]{17}^{-n}\varepsilon\leq (\frac{2}{\sqrt[4]{17}}+\frac{1}{17\sqrt{17}})\sqrt[4]{17}^{-n}\varepsilon\leq \sqrt[4]{17}^{-n}\varepsilon$. Hence, $x\ast y\in B_{n-1}$, concluding that $ab=\exp(x\ast y)\in U_{n-1}$.  
\item Given $a\in U_n$ and $g\in U_0$, we get $a=\exp(y)$ and $g=\exp(x)$ with $\|y\|\leq \sqrt{5^{-n-1}}\varepsilon$ and $\|x\|\leq \varepsilon$. Hence, $\|x\ast y\ast(-x)\|\leq \|y\|+C_1\|x\|\|y\|\leq \sqrt[4]{17}^{-n-1}\cdot (1+\frac{1}{2})\varepsilon\leq\sqrt[4]{17}^{-n}\varepsilon$, so $gag^{-1}=\exp(x\ast y\ast (-x))\in U_{n-1}$.   
\item  Given $a\in U_{n_1}$ and $b\in U_{n_2}$ with $n\leq n_1+n_2$, we get $a=\exp(x)$ and $b=\exp(y)$ with $\|x\|\leq \sqrt[4]{17}^{-n_1-1}\varepsilon$ and $\|y\|\leq \sqrt[4]{17}^{-n_2-1}\varepsilon$. Thus, $\|x\ast y\ast (-x)\ast(-y)\| \leq C_1\|x\|\|y\|\leq \frac{\sqrt[4]{17}^{-1}}{2} \sqrt[4]{17}^{-(n_1+n_2)-1} \varepsilon\leq \sqrt[4]{17}^{-n-1}\varepsilon$. Hence, $x\ast y\ast (-x)\ast(-y)\in B_n$, concluding $[a,b]\in U_n$. 
\item Since $\exp$ is a diffeomorphism in $U_0^2\subseteq \widetilde{U}$, we get that $g^2=h^2$ if and only if $2\log(h)=\log(h^2)=\log(g^2)=2\log(g)$, if and only if $g=h$.
\item Note that $g\in U_{n+4}$ if and only if $\log(g)\in B_{n+4}$, if and only if $\log(g^{17})=17\log(g)\in B_n$, if and only if $g^{17}\in U_n$. \qedhere
\end{enumerate}
\end{proof}

\begin{coro} \label{c:corollary 3 general}
Fix $k,l,N\in \N$ and $s\map \N\to \N$. There is $m\coloneqq m(k,l,N,s)\in\N$ such that the following holds: 

Let $G$ be a metric group, $X$ an $(l,r)${\hyp}Lipschitz symmetric subset and $r_0,\ldots,$ $r_m$ with $2r_i\leq r_{i-1}$ such that 
\[\Nn_{r_i}(X^9)\leq k\cdot \Nn_{9r_i}(X)<\infty.\]
Then, for some $c\leq m$, there is a sequence $X_N\subseteq \cdots\subseteq X_0\subseteq X^8$ satisfying the following properties for each $n<N$ with $s\coloneqq s(c)$: 
\[\begin{array}{ll}
\mathrm{(1)}& X^2\mathrm{\ and\ }X_0\mathrm{\ are\ }(c,r_s)\mbox{\hyp}\mathrm{commensurable.}\\ 
\mathrm{(2)} & X_{n+1}X_{n+1}\subseteq \D_{r_s}(X_n).\\
\mathrm{(3)} & X_n\mathrm{\ is\ covered\ by\ }c\mathrm{\ translates\ of\ }\D_{r_s}(X_{n+1}).\\
\mathrm{(4)} & x^{-1}X_{n+1}x\subseteq \D_{r_s}(X_n)\mathrm{\ for\ every\ }x\in X_1.\\
\mathrm{(5)} & [X_{n_1},X_{n_2}]\subseteq \D_{r_s}(X_n)\mathrm{\ whenever\ }n< n_1+n_2.\\
\mathrm{(6)} & \{x\in X_0\sth x^{17}\in X_n\}\subseteq X_{n+1}.\\
\mathrm{(7)} &\mathrm{If\ }x,y\in X_0\mathrm{\ with\ }x^2=y^2,\mathrm{\ then\ }y^{-1}x\in \D_{r_s}(X_N).\end{array}\]
\end{coro}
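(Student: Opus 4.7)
The plan is to argue by contradiction via ultraproduct, following the pattern of \cref{c:corollary 1 general,c:corollary 2 general}, and apply \cref{l:lemma corollary 3} to the Lie group produced by the Metric Lie Model Theorem. Assume the statement fails, so that for each $m\in\N$ there is a counterexample $(G_m,X_m,(r_{i,m})_{i\leq m})$ admitting no valid chain $X_N\subseteq\cdots\subseteq X_0\subseteq X_m^8$ as required. These assemble into an $l${\hyp}Lipschitz sequence of constant growth $k$ in doubling scales, so taking a non{\hyp}principal ultraproduct $(G^*,X,\ldots)$ in the associated language $\lang$ and applying \cref{c:metric lie model carolino}, we obtain a Lie model $\pi\map H\to L=\faktor{H}{K}$ of $G\cdot o_r(1)$ with $o_r(1)\trianglelefteq K\subseteq X^{28}\cdot o_r(1)$, $\dim L\leq d(k)$, $H\cap X^4$ being $(c_0(k),o_r(1))${\hyp}roughly commensurable to $X^2$, and $\pi(H\cap X^4)$ a compact neighbourhood of the identity in $L$.

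Next apply \cref{l:lemma corollary 3} to the Lie group $L$ with input $U=\pi(H\cap X^4)^\circ$, obtaining a sequence $(U_n)_{n\leq N}$ of symmetric compact neighbourhoods of the identity satisfying its seven properties with covering constant bounded by $17^{\dim L}\leq 17^{d(k)}$. Since $\pi$ is a group homomorphism with kernel containing $o_r(1)$, the preimages $\widetilde{X}_n\coloneqq\pi^{-1}(U_n)$ are $\bigwedge_\omega${\hyp}definable subsets of $H$ (each a union of $K${\hyp}cosets) and inherit the group{\hyp}theoretic analogues of the $U_n$'s properties exactly: $\widetilde{X}_{n+1}^2\subseteq\widetilde{X}_n$; $\widetilde{X}_n$ is covered by $17^{d(k)}$ $H${\hyp}translates of $\widetilde{X}_{n+1}$; $x^{-1}\widetilde{X}_{n+1}x\subseteq\widetilde{X}_n$ for $x\in\widetilde{X}_0$; $[\widetilde{X}_{n_1},\widetilde{X}_{n_2}]\subseteq\widetilde{X}_n$ for $n\leq n_1+n_2$; $\{x\in\widetilde{X}_0\sth x^{17}\in\widetilde{X}_n\}\subseteq\widetilde{X}_{n+4}$; and $x^2=y^2$ with $x,y\in\widetilde{X}_0$ implies $y^{-1}x\in K$. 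Moreover, since $\pi(H\cap X^4)$ is covered by finitely many $L${\hyp}translates of $U_0$, standard commensurability arguments as in the proof of \cref{c:corollary 1 general} yield that $\widetilde{X}_0\cap(H\cap X^4)$ is commensurable to $X^2$ modulo $o_r(1)$.

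The final step is to extract definable subsets $X_n\subseteq X^8$ from the $\widetilde{X}_n$'s while converting the group{\hyp}theoretic properties into their metric analogues. By compactness of the logic topology, the $\bigwedge_\omega${\hyp}definable sets $\widetilde{X}_n\cap X^8$ can be approximated by a nested sequence of definable sets $X_N\subseteq\cdots\subseteq X_0\subseteq X^8$ with $\widetilde{X}_n\cap X^8\subseteq X_n$ and $X_n$ arbitrarily close to $\widetilde{X}_n\cap X^8$. The key observation, leveraging $K\subseteq X^{28}\cdot o_r(1)$ together with the left{\hyp}invariance of the metric, is that each coset $hK$ sits at infinitesimal metric distance from $h\cdot X^{28}$, so modulo{\hyp}$K$ inclusions for the $\widetilde{X}_n$'s translate into modulo{\hyp}$\D_{r_s}$ inclusions for the $X_n$'s (the freely chosen scale $r_s=r_{s(c)}$ absorbs any bounded loss from compactification and Lipschitz bookkeeping). \L{}o\'{s}'s Theorem then descends the resulting definable statement from $G^*$ to $G_m$ for $\uu${\hyp}almost all $m$, contradicting the counterexample assumption. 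The main obstacle is precisely this metric{\hyp}versus{\hyp}logic conversion: the Lie model provides only logic{\hyp}topology control, and showing that ``small modulo $K$ in $L$'' faithfully translates to ``metrically thin modulo $\D_{r_s}$ in $G^*$'' requires careful tracking of the $K${\hyp}fibers and their metric footprint inside $X^8$, using the structural inclusion $K\subseteq X^{28}\cdot o_r(1)$ together with the $(l,r_0)${\hyp}Lipschitz regularity of $X$.
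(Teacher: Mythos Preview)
Your overall architecture matches the paper's: contradiction, ultraproduct, Lie model, \cref{l:lemma corollary 3}, then {\L}o\'{s}. But the final paragraph, where the actual work lies, contains a genuine misunderstanding. You write that the ``key observation'' is that $K\subseteq X^{28}\cdot o_r(1)$ makes each $K$-coset sit at infinitesimal distance from a translate of $X^{28}$, so that ``modulo-$K$ inclusions translate into modulo-$\D_{r_s}$ inclusions''. This is backwards on two counts. First, the inclusions $\widetilde{X}_{n+1}^2\subseteq\widetilde{X}_n$, $[\widetilde{X}_{n_1},\widetilde{X}_{n_2}]\subseteq\widetilde{X}_n$, etc.\ are \emph{exact}, not modulo $K$; there is nothing to translate. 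Second, $K$ is not metrically small: it can be as large as $X^8$ (or $X^{28}$ in the Carolino version), so ``close to $hX^{28}$'' gives no metric thinness at all. The inclusion $K\subseteq X^{28}\cdot o_r(1)$ is not the source of the $\D_{r_s}$ control; it merely pins down where the $X_n$ live.

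What actually does the conversion is a sandwiching argument that your proposal omits. Since $U_{n+1}\subseteq U_n^\circ$ and $\pi$ is continuous for the logic topology, one can insert \emph{relatively definable} sets $Y_n$ with $\pi^{-1}(U_{n+1})\subseteq Y_n\subseteq\pi^{-1}(U_n)$. This costs an index shift (e.g.\ $Y_{n+2}^2\subseteq Y_n$ rather than $Y_{n+1}^2\subseteq Y_n$). Then one sets $X'_{n+1}=Y_n\cap X^8$ and proves the crucial fact $Y_n\subseteq X'_n\cdot o_r(1)$: given $y\in Y_n$, the whole class $o_r(y)$ lies in $\pi^{-1}(U_n)\subseteq Y_{n-1}$ because $o_r(1)\subseteq K=\ker\pi$, and it meets $X^8$ because $Y_n\subseteq X^8\cdot o_r(1)$. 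This is where the $o_r(1)$-thinness appears, and it uses the inclusion $o_r(1)\subseteq K$, not $K\subseteq X^{\bullet}\cdot o_r(1)$. A final re-indexing $X_n\coloneqq X'_{3n+1}$ absorbs the accumulated shifts so that the conclusions match the statement. Note also that for this last step to land inside $X^8$ one needs $K\subseteq X^8\cdot o_r(1)$, which \cref{t:main theorem} provides but \cref{c:metric lie model carolino} does not (it only gives $X^{28}$); you should invoke \cref{t:main theorem} here, as the paper does.
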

\begin{proof} Aiming a contradiction, suppose otherwise. Thus, we get an $l${\hyp}Lipschitz sequence of growth $k$ in doubling scales $(G_m,X_m,r_{i,m})_{i\leq m\in\N}$ of counterexamples. Let $(G^*,X,\ldots)$ be a non{\hyp}principal ultraproduct as in \cref{t:main theorem}. Write $G$ for the $\bigvee_0${\hyp}definable subgroup generated by $X$. By \cref{t:main theorem}, we have a Lie model $\pi\map H\to L\coloneqq\faktor{H}{K}$ of $G\cdot o_r(1)$ with $o_r(1)\subseteq K\subseteq X^8\cdot o_r(1)$ such that $H$ is generated by $H\cap X^8\cdot o_r(1)$, $H\cap X^4$ is $o_r(1)${\hyp}roughly commensurable to $X^2$ and $\pi(H\cap X^8)$ is a neighbourhood of the identity in $L$.

By \cref{l:lemma corollary 3}, for any given compact neighbourhood of the identity $U_{-1}$, it is possible to find a sequence $(U_n)_{n\in\N}$ of symmetric compact neighbourhoods of the identity with $U_{n+1}\subseteq U_n^\circ$ and $U_0\subseteq U_{-1}$ satisfying the following properties:
\[\begin{array}{ll}
\mathrm{(1)}& U_{-1}\mathrm{\ and\ }U_0\mathrm{\ are\ }c_0\mbox{\hyp}\mathrm{commensurable.}\\ 
\mathrm{(2)} & U_{n+1}U_{n+1}\subseteq U_n.\\
\mathrm{(3)} & U_n\mathrm{\ is\ covered\ by\ }c_0\mathrm{\ translates\ of\ }U_{n+1}.\\
\mathrm{(4)} & x^{-1}U_{n+1}x\subseteq U_n\mathrm{\ for\ every\ }x\in U_0.\\
\mathrm{(5)} & [U_{n_1},U_{n_2}]\subseteq U_n\mathrm{\ whenever\ }n\leq n_1+n_2.\\
\mathrm{(6)} & \{x\in U_0\sth x^{17}\in U_n\}=U_{n+4}.\\
\mathrm{(7)} &\mathrm{If\ }x,y\in U_0\mathrm{\ with\ }x^2=y^2,\mathrm{\ then\ }x=y.\end{array}\]
In our case, we pick $U_{-1}=\pi(H\cap X^8)$. 

Now, as $\pi\map H\to L$ is continuous from a logic topology using enough parameters, there are symmetric relatively definable subsets $\pi^{-1}(U_{n+1})\subseteq Y_n\subseteq \pi^{-1}(U_n)$. Thus, $(Y_n)_{n\in\N}$ is a sequence of relatively definable symmetric subsets with $Y_{n+1}\subseteq Y_n$ and $Y_0\subseteq X^8\cdot o_r(1)$ satisfying the following properties: 
\[\begin{array}{ll}
\mathrm{(1)}& X^2\cdot o_r(1)\mathrm{\ and\ }Y_0\mathrm{\ are\ }c_1\mbox{\hyp}\mathrm{commensurable.}\\ 
\mathrm{(2)} & Y_{n+2}Y_{n+2}\subseteq Y_n.\\
\mathrm{(3)} & Y_n\mathrm{\ is\ covered\ by\ }c_1\mathrm{\ translates\ of\ }Y_{n+1}.\\
\mathrm{(4)} & y^{-1}Y_{n+2}y\subseteq Y_n\mathrm{\ for\ every\ }y\in Y_0.\\
\mathrm{(5)} & [Y_{n_1},Y_{n_2}]\subseteq Y_n\mathrm{\ whenever\ }n<n_1+n_2.\\
\mathrm{(6)} & \{y\in Y_0\sth y^{17}\in Y_n\}\subseteq Y_{n+3}.\\
\mathrm{(7)} &\mathrm{If\ }x,y\in Y_0\mathrm{\ with\ }x^2=y^2,\mathrm{\ then\ }x=y.\end{array}\]

Take $X'_{n+1}=Y_n\cap X^8$. Obviously, $X'_n$ is a symmetric subset. As $Y_n$ is relatively definable in $X^8\cdot o_r(1)$ and $X^8\subseteq X^8\cdot o_r(1)$ is definable, we conclude that $X'_{n+1}$ is definable. Now, note that $Y_n\subseteq X'_n\cdot o_r(1)$. Indeed, given $y\in Y_n$, we know $y\in \pi^{-1}(U_n)$. Thus, $o_r(y)\subseteq \pi^{-1}(U_n)\subseteq Y_{n-1}$, so $X^8\cap o_r(y)\subseteq X'_n$. As $Y_n\subseteq X^8\cdot o_r(1)$, we get that $y\in X^8\cdot o_r(1)$, so there is $x\in o_r(y)\cap X^8\subseteq X'_n$. Then, $y\in o_r(x)\subseteq X'_n\cdot o_r(1)$. Since $y$ is arbitrary, we conclude $Y_n\subseteq X'_n\cdot o_r(1)$. Hence, $(X'_n)_{n\in\N_{>0}}$ is a sequence of symmetric definable subset with $X'_{n+1}\subseteq X'_n$ and $X'_1\subseteq X^8$ satisfying the following properties:
\[\begin{array}{ll}
\mathrm{(1)}& X^2\mathrm{\ and\ }X'_1\mathrm{\ are\ }o_r(1)\mbox{\hyp}\mathrm{rough\ }c\mbox{\hyp}\mathrm{commensurable.}\\ 
\mathrm{(2)} & X'_{n+3}X'_{n+3}\subseteq o_r(X'_n).\\
\mathrm{(3)} & X'_n\mathrm{\ is\ covered\ by\ }c\mathrm{\ translates\ of\ }o_r(X'_{n+1}).\\
\mathrm{(4)} & aX'_{n+3}a^{-1}\subseteq o_r(X'_n)\mathrm{\ for\ every\ }a\in X'_1.\\
\mathrm{(5)} & [X'_{n_1},X'_{n_2}]\subseteq o_r(X'_n)\mathrm{\ whenever\ }n<n_1+n_2-2.\\
\mathrm{(6)} & \{x\in X'_1\sth x^{17}\in X'_n\}\subseteq X'_{n+3}.\\
\mathrm{(7)} &\mathrm{If\ }x,y\in X'_1\mathrm{\ with\ }x^2=y^2,\mathrm{\ then\ }y^{-1}x\in \bigcap_{n\in\N} o_r(Y_n).\end{array}\]

Set $X_n=X'_{3n+1}$ and $s=s(c)$. We have a sequence $X_N\subseteq \cdots\subseteq X_1\subseteq X^8$ of symmetric definable subsets containing the identity satisfying the following properties:
\[\begin{array}{ll}
\mathrm{(1)}& X^2\mathrm{\ and\ }X_0\mathrm{\ are\ }(c,r_s)\mbox{\hyp}\mathrm{commensurable.}\\ 
\mathrm{(2)} & X_{n+1}X_{n+1}\subseteq \D_{r_s}(X_n).\\
\mathrm{(3)} & X_n\mathrm{\ is\ covered\ by\ }c\mathrm{\ translates\ of\ }\D_{r_s}(X_{n+1}).\\
\mathrm{(4)} & x^{-1}X_{n+1}x\subseteq \D_{r_s}(X_n)\mathrm{\ for\ every\ }x\in X_1.\\
\mathrm{(5)} & [X_{n_1},X_{n_2}]\subseteq \D_{r_s}(X_n)\mathrm{\ whenever\ }n< n_1+n_2.\\
\mathrm{(6)} & \{x\in X_0\sth x^{17}\in X_n\}\subseteq X_{n+1}.\\
\mathrm{(7)} &\mathrm{If\ }x,y\in X_0\mathrm{\ with\ }x^2=y^2,\mathrm{\ then\ }y^{-1}x\in \D_{r_s}(X_N).\end{array}\]
By {\L}o\'{s}'s theorem, this contradicts the initial assumption that $(G_m,X_m,(r_{i,m})_{i\leq m})$ is a counterexample.
\end{proof}

\section{Digression on de Saxc\'{e} product theorem} 

In \cite{rodriguez2020piecewise}, the close relationship between the model theoretic Generic Set Lemma \cite[Theorem 2.1]{rodriguez2020piecewise} and the additive combinatorics Machado's Closed Approximate Subgroups Theorem \cite[Theorem 1.4]{machado2023closed} was already discussed; both are special cases of a more general topological result. Here, we take the opportunity to explain the further relation of these two results with de Saxc\'{e}'s Product Theorem \cite[Theorem 1]{desaxce2014product}, which also has some underlying similarities with the main results of this paper. First of all, we note the following remarkable consequences of Machado's Closed Approximate Subgroup Theorem by using Poguntke's Theorem \cite[Theorem 3.3]{poguntke1994dense}. Recall that a \emph{semi{\hyp}simple Lie group} is a connected Lie group that has no non{\hyp}trivial connected solvable normal subgroups. A \emph{simple Lie group} it is a non{\hyp}abelian connected Lie group that has no non{\hyp}trivial connected normal subgroups.  

\begin{lem}\label{l:poguntke} Let $G$ be a semi{\hyp}simple Lie group and $X$ a closed approximate subgroup with empty interior: 
\begin{enumerate}[label={\emph{(\arabic*)}}, noitemsep, wide]
\item Suppose $X$ is compact. Then, $X$ is contained in finitely many translates of a proper connected closed subgroup $H\lneq G$.
\item Suppose $X$ is connected. Then, $X$ is contained in a proper connected closed subgroup $H\lneq G$.
\item Suppose $G$ is a simple Lie group. Then, $X$ is discrete or contained in a proper closed subgroup $H\lneq G$.
\end{enumerate} 
\end{lem}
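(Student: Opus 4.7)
The approach in all three parts is to apply Machado's Closed Approximate Subgroup Theorem \cite[Theorem 1.4]{machado2023closed} to $X$, yielding a closed subgroup $L\leq G$ commensurable with $X$: in particular $X\subseteq FL$ for some finite $F\subseteq G$, and $L$ (or at least a neighbourhood of the identity of $L$) arises from a bounded power of $X$. Because $X$ has empty interior in $G$, $L$ cannot have non-empty interior either (otherwise a bounded power of $X$ would contain an open neighbourhood of the identity, contradicting the hypothesis); hence $\dim L^0<\dim G$ and $L^0$ is a proper connected closed subgroup of $G$, the common witness for parts (1) and (2).

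For \textbf{(1)}, the identity component $L^0$ is open in $L$, so the compact subset $X\subseteq FL$ meets only finitely many $L^0$-cosets in each $gL$, producing the desired finite cover of $X$ by translates of $L^0$.

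For \textbf{(2)}, choose $F$ to consist of distinct coset representatives of $L$ in $G$; then $X=\bigsqcup_{g\in F}(X\cap gL)$ is a partition of $X$ into relatively closed subsets, so connectedness confines $X$ to a single coset $gL$, and $1\in X$ forces $g\in L$, giving $X\subseteq L$. A second use of connectedness places $X$ inside the identity component $L^0$.

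For \textbf{(3)}, suppose $G$ is simple and $X$ is not discrete, so that $L^0\neq 1$. If $\overline{\langle X\rangle}$ is proper in $G$, it contains $X$ and is the required proper closed subgroup. Otherwise $\langle X\rangle$ is a dense subgroup of the simple Lie group $G$ that contains, through the commensurability with $L$, the non-trivial connected analytic subgroup $L^0$ --- a configuration excluded by Poguntke's Theorem \cite[Theorem 3.3]{poguntke1994dense}, which severely restricts proper dense subgroups of a (semi-)simple Lie group. I expect the main technical obstacle to lie in this last step: correctly identifying the statement of Poguntke's theorem that forbids a proper dense subgroup of a connected simple Lie group from containing a positive-dimensional analytic subgroup, and handling the borderline alternative $\langle X\rangle=G$ via a Baire-category argument using that $\langle X\rangle=\bigcup_n X^n$ is a countable union of closed nowhere-dense sets in the locally compact group $G$.
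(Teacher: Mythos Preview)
Your overall strategy---apply Machado's theorem to obtain a Lie group $L$ associated to $X$, then use its identity component $L^\circ$ as the proper subgroup---matches the paper's. However, there is a genuine gap in your treatment of parts (1) and (2). Machado's theorem does \emph{not} produce a closed subgroup of $G$: it yields a Lie group $L$ together with a continuous injective homomorphism $\iota\colon L\to G$ such that $X\subseteq \iota(L)$ and $X$ has non-empty interior in $L$ (with $L$'s intrinsic topology). The image $\iota(L^\circ)$ is only an \emph{immersed} connected Lie subgroup, and in general its closure in $G$ could be all of $G$ (think of the irrational line in a torus for the analogous phenomenon). Your inference ``$L$ has empty interior in $G$, hence $\dim L^\circ<\dim G$, hence $L^\circ$ is a proper closed connected subgroup'' therefore fails: the middle step does not follow, and $L^\circ$ need not be closed in $G$ at all.

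This is exactly where Poguntke's theorem enters---and it must be invoked already at the outset, not only in part (3). Poguntke's result says that for a semi-simple $G$, the restriction $\iota_{|L^\circ}$ has dense image only if it is an isomorphism onto $G$. Since $X\cap L^\circ$ has non-empty interior in $L$ but empty interior in $G$, the map cannot be an isomorphism, so $H\coloneqq\overline{\iota(L^\circ)}$ is a \emph{proper} connected closed subgroup of $G$. With this $H$ in hand, parts (1) and (2) follow as you outline (compactness of $X$ gives finitely many cosets of $L^\circ$ meeting $X$; connectedness of $X$ forces $X\subseteq L^\circ\subseteq H$). For part (3), the paper's argument is also cleaner than your sketch: if $\langle X\rangle$ is dense in $G$, then since $L^\circ\trianglelefteq L$ the closed subgroup $H$ is normalised by a dense subset, hence $H\trianglelefteq G$; simplicity of $G$ forces $H=1$, so $L^\circ=1$, $L$ is discrete, and thus $X$ is discrete. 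No Baire-category detour is needed.
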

\begin{proof} By Machado's Closed Approximate Subgroups Theorem \cite[Theorem 1.4]{machado2023closed}, there is a subgroup $L\leq G$ containing $X$ that admits a Lie group structure such that $X$ has non{\hyp}empty interior, the map $\inc\map L\to G$ is a $1${\hyp}to{\hyp}$1$ continuous group homomorphism and $X$ has the same subspace topology induced by $L$ and $G$. By \cite[Theorem 3.3]{poguntke1994dense} and \cite[Proposition 6.5]{lee2002introduction}, as $G$ is semi{\hyp}simple, $\inc_{\mid L^{\circ}}$ has dense image if and only if it is actually an isomorphism. Since $X\cap L^{\circ}$ has empty interior in $G$ but not in $L$, we conclude that $H\coloneqq\overline{\inc(L^{\circ})}$ is a proper connected closed subgroup of $G$. 
\begin{enumerate}[label={\rm{(\arabic*)}}, wide]
\item If $X$ is compact, finitely many translates of $L^{\circ}$ cover it, so finitely many translates of $H$ cover $X$.
\item If $X$ is connected, $X\subseteq L^{\circ}$, so $X\subseteq H$.
\item Finally, suppose $G$ is simple and $X$ is not contained in a proper closed subgroup of $G$. Thus, the group generated by $X$ is dense in $G$, so $\inc$ has dense image. Since $L^{\circ}$ is normal in $L$, so is $H$ in $\inc(L)$. As $\inc(L)$ is dense and $H$ is closed, we conclude that $H$ is a normal subgroup of $G$. Since $G$ is simple, we conclude that $H$ is trivial, so $L^{\circ}$ is trivial, concluding that $L$ is discrete. Hence, $X$ is discrete.\qedhere
\end{enumerate}
\end{proof}

As a consequence, by an easy ultraproduct argument, we get the following variations of de Saxc\'{e}'s Product Theorem \cite[Theorem 1]{desaxce2014product}. Let us restate first the original result by de Saxc\'{e}.  
\begin{fact}[de Saxc\'{e}'s Product Theorem] Let $G$ be a simple Lie group and take some left invariant metric. There is a neighbourhood $U$ of the identity such that, for any $0<\sigma<\dim(G)$, there are $\varepsilon=\varepsilon(\sigma)>0$ and $\delta_0=\delta_0(\sigma)$ with the following property: if $X\subseteq U$ and $0<\delta<\delta_0$ satisfy 
\begin{enumerate}[label={\emph{(\arabic*)}}, wide]
\item $\Nn^\cov_\delta(X)\leq \delta^{-\sigma-\varepsilon}$,
\item $\Nn^\cov_\rho(X)\geq \delta^{\varepsilon}\rho^{-\sigma}$ for any $\rho\geq\delta$ and
\item $\Nn^\cov_\delta(X^3)\leq \delta^{-\varepsilon}\Nn^\cov_\delta(X)$,
\end{enumerate}
then there is a closed connected subgroup $H\lneq G$ with $X\subseteq \D_{\delta^{\varepsilon}}(H)$.
\end{fact}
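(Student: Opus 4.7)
The plan is to derive the Fact by contradiction via an ultraproduct of copies of $G$, combining the Metric Lie Model \cref{t:main theorem} with \cref{l:poguntke}.

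Fix $\sigma\in(0,\dim(G))$ and suppose, towards a contradiction, that along sequences $\varepsilon_m\to 0$ and $\delta_m\to 0$ there are counterexamples $X_m\subseteq U\subseteq G$: each $X_m$ satisfies \emph{(1){\hyp}(3)} but lies outside $\D_{\delta_m^{\varepsilon_m}}(H)$ for every proper closed connected $H\lneq G$. First, I would check that the $X_m$ form a uniformly $(l,1)${\hyp}Lipschitz family (a left{\hyp}invariant Riemannian metric on $U$ is locally Lipschitz for right translations and conjugation). After a harmless symmetrisation $X_m\leadsto X_m\cup X_m^{-1}\cup\{1\}$, condition \emph{(3)} and Pl\"unnecke{\hyp}Ruzsa iterations give $\Nn_{\delta_m}(X_m^9)\leq \delta_m^{-O(\varepsilon_m)}\Nn_{\delta_m}(X_m)$; then \cref{l:lemma sequence of growth in doubling scale} produces radii $r_{i,m}$ with $\Nn_{r_{i,m}}(X_m^9)\leq k\cdot \Nn_{9r_{i,m}}(X_m)$ for a constant $k=k(\sigma)$, so that $(G,X_m,r_{i,m})$ becomes an $l${\hyp}Lipschitz sequence of constant growth $k$ in doubling scales.

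Next, I would apply \cref{t:main theorem} to a non{\hyp}principal ultraproduct $G^*=\prod G/\uu$. This yields $X\subseteq G^*$ and a connected Lie model $\pi\map H\to L=\faktor{H}{K}$ with $o_r(1)\trianglelefteq K\subseteq X^8\cdot o_r(1)$ and $\pi(H\cap X^4)$ a compact neighbourhood of $1_L$. The key additional step, absent from the abstract metric setting, is to exploit that $G^*$ carries a natural \emph{standard part} map $\st$ defined on its bounded part with restricted kernel exactly $o_r(1)$. The composition $\st\circ\pi^{-1}$ descends to a continuous injective group homomorphism $\inc\map L\to G$, and the compact approximate subgroup $\overline{X}\coloneqq \inc\,\pi(H\cap X^4)\subseteq G$ is the closure of the ultralimit of the $X_m$. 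Since $G$ is simple, I would then invoke \cref{l:poguntke}(1) and (3): if $\overline{X}$ is discrete it is finite, so by connectedness $L$ collapses and the lower bound $\Nn^\cov_{r_{0,m}}(X_m)\geq \delta_m^{\varepsilon_m}r_{0,m}^{-\sigma}\to\infty$ from \emph{(2)} gives a contradiction via \L{o}\'{s}'s Theorem; otherwise $\overline{X}$ lies in finitely many translates of a proper closed connected $H_0\lneq G$, and picking the coset through $1\in\overline X$ yields $\overline{X}\subseteq H_0$, which upon unwinding the ultraproduct produces $X_m\subseteq \D_{\eta}(H_0)$ for every $\eta>0$ and all large $m$, contradicting the standing assumption once $\varepsilon_m$ decays slowly enough that $\delta_m^{\varepsilon_m}>\eta$.

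The main obstacle is \emph{quantitative}: this ultraproduct approach is intrinsically ineffective and does not produce an explicit $\varepsilon=\varepsilon(\sigma)$ or $\delta_0=\delta_0(\sigma)$. Making it effective would require a constructive substitute for the Lie model step (along the lines of \cref{t:rough lie model carolino} together with a quantitative Gleason{\hyp}Yamabe input) and a uniform quantitative version of \cref{l:poguntke}(3), so that one could work at a single scale $r_{i,m}$ rather than passing to the ultralimit. De Saxc\'{e}'s original argument bypasses this by an intricate inductive product construction adapted to semi{\hyp}simple real Lie groups, and reconciling that finite combinatorial approach with our model{\hyp}theoretic framework is the principal challenge in turning the sketch above into a fully constructive proof with explicit constants.
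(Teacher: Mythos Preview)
The paper does not prove this statement: it is stated as a \emph{Fact}, i.e.\ a quotation of de Saxc\'{e}'s result \cite[Theorem~1]{desaxce2014product}, with no proof given. The paper's own contribution in this direction is the pair of \emph{variants} \cref{p:de saxce,p:de saxce semisimple}, which have weaker (qualitative, non{\hyp}explicit) conclusions. So there is no ``paper's own proof'' to compare your proposal against.

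That said, your sketch is close in spirit to the proofs of \cref{p:de saxce,p:de saxce semisimple}, but more roundabout. There the paper does \emph{not} invoke \cref{t:main theorem}: since $U\subseteq G$ is compact, the continuous{\hyp}logic ultraproduct of the $X_m$ lands directly as a closed $k${\hyp}approximate subgroup $X\subseteq U\subseteq G$, and one applies \cref{l:poguntke} immediately. Your route via the Metric Lie Model and a subsequent embedding $L\hookrightarrow G$ is unnecessary extra work (and the claim that the standard part map has ``restricted kernel exactly $o_r(1)$'' needs justification, since the $r_{i,m}$ come from \cref{l:lemma sequence of growth in doubling scale} and are not a priori cofinal with the metric infinitesimals).

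More importantly, you have correctly identified in your final paragraph the reason your approach cannot recover the actual Fact: the ultraproduct argument is intrinsically ineffective and yields no explicit $\varepsilon(\sigma)$ or $\delta_0(\sigma)$. This is precisely why the paper states de Saxc\'{e}'s theorem as an external Fact and only proves the qualitative \cref{p:de saxce,p:de saxce semisimple} by this method. There is also a gap in your endgame: from \cref{l:poguntke}(1) you get $\overline{X}$ inside \emph{finitely many translates} of a proper connected closed $H_0$, and ``picking the coset through $1$'' does not force $\overline{X}\subseteq H_0$ unless you know $\overline{X}$ is connected --- which you have not established (and which is exactly the hypothesis distinguishing \cref{l:poguntke}(2) from (1)). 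The conclusion of de Saxc\'{e}'s theorem demands a single connected $H$, so this step would need repair even at the qualitative level.
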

The third hypothesis in \cite[Theorem 1]{desaxce2014product} is very close to assuming that $X$ is a metric approximate subgroup, while the other two hypotheses are morally saying that the dimension of $X$ looks like $0<\sigma<d$. Our variations are based on these analogies:

\begin{prop}[A Product Theorem for Simple Lie Groups] \label{p:de saxce} Let $G$ be a simple Lie group and $U$ a compact neighbourhood of the identity. Take some left invariant metric. Let $0<\sigma_0<\sigma_1<\dim(G)$ and $c_0,c_1,k,s\in \N$. Then, there is $m\in\N$ such that, for any $(k,2^{-m})${\hyp}metric approximate subgroup $X\subseteq U$ satisfying $2^{i\sigma_0-c_0}\leq\Nn_{2^{-i}}(X)\leq 2^{i\sigma_1+c_1}$ for each $i\leq m$, there is a closed subgroup $H\lneq G$ with $X$ covered by $\D_{2^{-s}}(H)$.
\end{prop}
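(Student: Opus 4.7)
The plan is to argue by contradiction via an ultrapower, reducing everything to a direct application of \cref{l:poguntke}(3). Suppose the conclusion fails; then for each $m\in\N$ there exists a $(k,2^{-m})${\hyp}metric approximate subgroup $X_m\subseteq U$ satisfying $2^{i\sigma_0-c_0}\leq \Nn_{2^{-i}}(X_m)\leq 2^{i\sigma_1+c_1}$ for all $i\leq m$, but not covered by $\D_{2^{-s}}(H)$ for any proper closed subgroup $H\lneq G$.

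Fix a non{\hyp}principal ultrafilter $\uu$ on $\N$ and let $G^*\coloneqq G^\N/\uu$ with the ultrapower metric structure. Set $X\coloneqq [X_m]_\uu\subseteq G^*$. Since $U$ is compact in $G$, every $x\in X$ has a well{\hyp}defined standard part $\st(x)\in U$; let $X_\infty\coloneqq \st(X)\subseteq U\subseteq G$. The goal is to show that $X_\infty$ is an infinite closed approximate subgroup of $G$ with empty interior, since then \cref{l:poguntke}(3), using that $G$ is simple, produces a proper closed subgroup $H\lneq G$ containing $X_\infty$. Letting $H^*\coloneqq [H]_\uu$, for every $x\in X$ the distance $\dd(x,H^*)$ is bounded by the infinitesimal $\dd(x,\st(x))$, hence by {\L}o\'{s}'s theorem $X_m\subseteq \D_{2^{-s}}(H)$ for $\uu${\hyp}almost all $m$, contradicting the counterexample assumption.

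Three properties of $X_\infty$ require checking. (i) \emph{$X_\infty$ is a closed symmetric $k${\hyp}approximate subgroup containing the identity.} Closure uses $\aleph_1${\hyp}saturation of $G^*$ together with compactness of $U$, while symmetry and containing $1$ are immediate; the approximate{\hyp}subgroup condition follows because {\L}o\'{s}'s theorem gives $XX\subseteq YX\D_{2^{-\omega}}(1)$ for some internal $Y$ of size $\leq k$ and some nonstandard $\omega$, and after the standard reduction $Y\subseteq X^3\D_{2^{-\omega}}(1)$ the translators lie in a compact neighbourhood of $U^3$ and hence have well{\hyp}defined standard parts, so applying $\st$ collapses the infinitesimal thickening to $X_\infty X_\infty\subseteq \st(Y)X_\infty$ with $|\st(Y)|\leq k$. (ii) \emph{$X_\infty$ is not discrete.} For each $i$ and each $m\geq i$, pick a $2^{-i}${\hyp}separated subset of $X_m$ of size $\geq 2^{i\sigma_0-c_0}$; the ultraproduct yields such a subset in $X$, and applying $\st$ preserves $2^{-i}${\hyp}separation by continuity of $\dd$, so $\Nn_{2^{-i}}(X_\infty)\geq 2^{i\sigma_0-c_0}$. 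Thus $X_\infty$ is infinite, and being compact it has an accumulation point. (iii) \emph{$X_\infty$ has empty interior.} If some ball $B_\rho(x)\subseteq X_\infty$ then $\Nn_{2^{-i}}(X_\infty)\gtrsim 2^{i\dim G}$ for $2^{-i}\ll\rho$; conversely, pick preimages under $\st$ of a $2^{-i}${\hyp}separated subset of $X_\infty$ to obtain a $2^{-(i+1)}${\hyp}separated subset of $X$, which gives $\Nn_{2^{-i}}(X_\infty)\leq \Nn_{2^{-(i+1)}}(X_m)\leq 2^{(i+1)\sigma_1+c_1}$ for $\uu${\hyp}almost all $m$, contradicting the previous lower bound once $i$ is large enough, thanks to $\sigma_1<\dim G$.

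The main obstacle is the careful $\varepsilon/\delta$ bookkeeping at the standard{\hyp}part map needed to transfer discretisation bounds in both directions (especially in (iii), where the strict gap $\dim G-\sigma_1>0$ is what beats the polynomial mismatch between $\Nn_{2^{-i}}(X_\infty)$ and $\Nn_{2^{-(i+1)}}(X)$). Beyond this, the argument is a formal ultrapower transfer plus a single invocation of \cref{l:poguntke}(3), which already packages Machado's closed approximate subgroup theorem and Poguntke's theorem on dense subgroups of semi{\hyp}simple Lie groups; the main Lie model theorem of the paper is not needed here.
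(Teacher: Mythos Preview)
Your argument is correct and follows essentially the same route as the paper: contradiction, ultraproduct in the continuous-logic sense (take standard parts over the compact $U$), check the limit $X_\infty$ is a closed $k$-approximate subgroup that is neither discrete nor has interior, apply \cref{l:poguntke}(3), and transfer back via {\L}o\'{s}. The only real difference is in how non-discreteness and empty interior are verified: the paper quotes classical dimension theory (the discretisation bounds force $0<\sigma_0\leq\dim X_\infty\leq\sigma_1<\dim G$ for the inductive dimension, via \cite{hurewicz2015dimension} and \cite{falconer1990fractal}), whereas you argue directly from the discretisation inequalities, comparing the $\gtrsim 2^{i\dim G}$ growth of a ball against the transferred bound $\leq 2^{(i+1)\sigma_1+c_1}$. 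Your route is slightly more self-contained; the paper's is shorter once the references are granted.
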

\begin{proof} Aiming a contradiction, suppose otherwise. Then, we have a counterexample $X_m$ for each $m\in\N$. Take an ultraproduct in the sense of (unbounded) continuous logic, i.e. take an ultraproduct, take the subgroup generated by $U$ and quotient by the infinitesimals. By compactness of $U$, we end then with a closed subset $X\subseteq U$ of $G$. By {\L}o\'{s}'s Theorem, $X$ is a $k${\hyp}approximate subgroup and satisfies $2^{i\sigma_0-c_0}\leq \Nn_{2^{-i}}(X)\leq 2^{i\sigma_1+c_1}$ for every $i\in\N$. Then, $0<\sigma_0\leq \dim(X)\leq \sigma_1<d$, where $\dim$ denotes the (large) inductive dimension, by \cite[Theorem VII 2]{hurewicz2015dimension} and \cite[Eq.3.17 p.46]{falconer1990fractal}. Since the inductive dimension of discrete spaces is $0$, $X$ is not discrete. Also, by \cite[Corollary 1 of Theorem IV 3]{hurewicz2015dimension}, $X$ has empty interior in $G$. By \cref{l:poguntke}, as $G$ is simple, $X$ is contained in a proper closed subgroup $H\lneq G$. Thus, by {\L}o\'{s}'s Theorem, there is some $m$ such that $X_m\subseteq \D_{2^{-s}}(H)$, getting a contradiction with our initial assumption.
\end{proof}

\begin{prop}[A Product Theorem for Semisimple Lie Groups] \label{p:de saxce semisimple} Let $G$ be a semi{\hyp}simple Lie group and $U$ a compact neighbourhood of the identity. Take some left invariant metric. Let $\sigma<\dim(G)$, $C,k,s\in \N$ and $s\map \N\to \N$. Then, there is $m\in\N$ such that, for any $(k,2^{-m})${\hyp}metric approximate subgroup $X\subseteq U$ satisfying $\Nn_{2^{-i}}(X)\leq C\cdot 2^{i\sigma}$ for each $i\leq m$, there is $c\leq m$ and a connected closed subgroup $H\lneq G$ with $X$ covered by $c$ many translates of $\D_{2^{-s(c)}}(H)$.
\end{prop}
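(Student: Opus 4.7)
The strategy is to mirror the proof of \cref{p:de saxce}, replacing part \emph{(3)} of \cref{l:poguntke} with part \emph{(1)}. Suppose for a contradiction that a counterexample $X_m\subseteq U$ exists for every $m\in\N$. Form a non{\hyp}principal ultraproduct of $(G,U,X_m)_{m\in\N}$ in the sense of (unbounded) continuous logic --- take the ultraproduct, retain the subgroup generated by bounded elements, and quotient by infinitesimals. Compactness of $U$ ensures that this process yields a compact subset $X\subseteq U\subseteq G$. Because the $2^{-m}${\hyp}thickenings become infinitesimal and collapse to the identity in the quotient, {\L}o\'{s}'s Theorem shows that $X$ is an ordinary $k${\hyp}approximate subgroup of $G$.

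For each fixed $i\in\N$ the bound $\Nn_{2^{-i}}(X_m)\leq C\cdot 2^{i\sigma}$ holds for all $m\geq i$, and a straightforward separated{\hyp}set lifting argument transfers it to $X$, at worst up to an absolute multiplicative constant to absorb the loss from the infinitesimal thickening; so the upper Minkowski{\hyp}Bouligand dimension of $X$ is at most $\sigma<\dim(G)$. Combining the standard inequalities relating topological (inductive), Hausdorff and Minkowski{\hyp}Bouligand dimensions \cite[Eq.3.17 p.46]{falconer1990fractal} with \cite[Corollary 1 of Theorem IV 3]{hurewicz2015dimension}, $X$ has empty interior in $G$. Therefore $X$ is a compact $k${\hyp}approximate subgroup of the semi{\hyp}simple Lie group $G$ with empty interior, and \cref{l:poguntke}\emph{(1)} delivers a proper connected closed subgroup $H\lneq G$ together with finitely many $a_1,\dots,a_{c_0}\in G$ such that $X\subseteq a_1H\cup\dots\cup a_{c_0}H$.

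By left invariance of the metric, $\D_r(a_jH)=a_j\D_r(H)$ for every $r>0$, so $X$ is covered by $c_0$ translates of $\D_{2^{-s(c_0)}}(H)$. Pulling this back via {\L}o\'{s}'s Theorem, for ultrafilter{\hyp}many $m$ the set $X_m$ is covered by $c_0$ translates of $\D_{2^{-s(c_0)}}(H)$; choosing any such $m\geq c_0$ contradicts the assumption that $X_m$ is a counterexample. The only genuinely delicate step is translating the metric covering{\hyp}number bound into the topological statement of empty interior, which is purely classical fractal dimension theory and is handled exactly as in the proof of \cref{p:de saxce}. The substantive structural difference from the simple case is that, without a lower bound on $\Nn_{2^{-i}}(X_m)$, we cannot rule out low{\hyp}dimensional or discrete $X$; but \cref{l:poguntke}\emph{(1)} accommodates this at the price of admitting finitely many cosets of $H$ rather than a single coset.
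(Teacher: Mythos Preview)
Your proposal is correct and follows essentially the same route as the paper: contradiction via a continuous{\hyp}logic ultraproduct, transfer of the discretisation bound to obtain empty interior through the standard dimension inequalities, and then \cref{l:poguntke}\emph{(1)} to produce the proper connected closed subgroup and finitely many covering cosets, followed by {\L}o\'{s} to reach the contradiction. The only cosmetic difference is that the paper asserts the bound $\Nn_{2^{-i}}(X)\leq C\cdot 2^{i\sigma}$ directly via {\L}o\'{s} without the hedge about an absolute multiplicative constant, and it cites \cite[Theorem VII 2]{hurewicz2015dimension} explicitly for the inductive{\hyp}dimension step.
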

\begin{proof} Aiming a contradiction, suppose otherwise. Then, take an ultraproduct in the sense of (unbounded) continuous logic of a sequence of counterexamples. By compactness of $U$, we end then with a closed subset $X\subseteq U$ of $G$. By {\L}o\'{s}'s Theorem, $X$ is a $k${\hyp}approximate subgroup and satisfies $\Nn_{2^{-i}}(X)\leq C\cdot 2^{i\sigma}$ for every $i\in\N$. Then, $\dim(X)\leq \sigma<d$, where $\dim$ denotes the (large) inductive dimension, by \cite[Theorem VII 2]{hurewicz2015dimension} and \cite[Eq.3.17 p.46]{falconer1990fractal}. In particular, $X$ has empty interior in $G$ by \cite[Corollary 1 of Theorem IV 3]{hurewicz2015dimension}. By \cref{l:poguntke}, as $G$ is semi{\hyp}simple, $X$ is contained in finitely many translates of a proper connected closed subgroup $H\lneq G$. Thus, by {\L}o\'{s}'s Theorem, there is some $m>c$ such that $X_m$ is covered by $c$ many translates of $\D_{2^{-s(c)}}(H)$, getting a contradiction with our initial assumption.
\end{proof}

\bibliographystyle{alphaurl}
\bibliography{metric_approximate_subgroups}

\end{document}